\definecolor{refkey}{gray}{.45}
\definecolor{labelkey}{gray}{.45}
\newsavebox\myboxA
\newsavebox\myboxB
\newlength\mylenA
\newcommand*\xoverline[2][0.75]{%
    \sbox{\myboxA}{$\m@th#2$}%
    \setbox\myboxB\null
    \ht\myboxB=\ht\myboxA%
    \dp\myboxB=\dp\myboxA%
    \wd\myboxB=#1\wd\myboxA
    \sbox\myboxB{$\m@th\overline{\copy\myboxB}$}
    \setlength\mylenA{\the\wd\myboxA}
    \addtolength\mylenA{-\the\wd\myboxB}%
    \ifdim\wd\myboxB<\wd\myboxA%
       \rlap{\hskip 0.5\mylenA\usebox\myboxB}{\usebox\myboxA}%
    \else
        \hskip -0.5\mylenA\rlap{\usebox\myboxA}{\hskip 0.5\mylenA\usebox\myboxB}%
    \fi}
\newcommand{\C}{\mathcal C}
\newcommand\wtilde[1]{\overset{\lower.4ex\hbox{$\scriptstyle \sim$}}{#1}}
\newcommand\wst[1]{\overset{\lower.5ex\hbox{$\scriptscriptstyle \sim$}}{#1}}
\newcommand{\blb}{\raise.3ex\hbox{$\scriptstyle \pmb \lbrack$}}
\newcommand{\sblb}{\raise.1ex\hbox{$\scriptscriptstyle \pmb \lbrack$}}
\newcommand{\brb}{\raise.3ex\hbox{$\scriptstyle \pmb \rbrack$}}
\newcommand{\sbrb}{\raise.1ex\hbox{$\scriptscriptstyle \pmb \rbrack$}}
\newcommand{\bla}{\raise.2ex\hbox{$\scriptstyle\pmb \langle$}}
\newcommand{\sbla}{\raise.1ex\hbox{$\scriptscriptstyle\pmb \langle$}}
\newcommand{\bra}{\raise.2ex\hbox{$\scriptstyle\pmb \rangle$}}
\newcommand{\sbra}{\raise.1ex\hbox{$\scriptscriptstyle\pmb \rangle$}}
\newcommand{\blrb}{\raise.3ex\hbox{$\scriptstyle \pmb | $}}
\newcommand{\sblrb}{\raise.1ex\hbox{$\scriptscriptstyle \pmb | $}}
\newcommand{\R}{\mathbb R}
\newcommand{\psum}{{+_{\negthinspace\kern-2pt p}}\,}
\newcommand{\qsum}[1]{{+_{\negthinspace\kern-2pt #1}}\,}
\newcommand{\dpsum}{{\tilde+_{\negthinspace\kern-1pt p}}\,}
\newcommand{\dqsum}[1]{{\tilde+_{\negthinspace\kern-1pt #1}}\,}
\newcommand{\lsub}[1]{\hskip -1.5pt\lower.5ex\hbox{$_{#1}$}}
\numberwithin{equation}{section}
\newtheorem{theo}{Theorem}[section]
\newtheorem{coro}[theo]{Corollary}
\newtheorem{lemm}[theo]{Lemma}
\newtheorem{prop}[theo]{Proposition}
\theoremstyle{definition}
\begin{document}

\title[]{log-concavity of eigenfunction and Brunn-Minkowski inequality of eigenvalue for weighted p-Laplace operator}

\author[L. Qin]{Lei Qin }
\address{Institute of Mathematics,
Hunan University, Changsha, 410082, China}
\email{lqin8763@gmail.com}

\keywords{Eigenvalue problem, weighted $p$-Laplace, log-concavity, Brunn-Minkowski type inequality}

\maketitle

\baselineskip18pt

\parskip3pt

\begin{abstract}
In this paper, we investigate the log-concavity property of the first eigenfunction to the weighted $p$-Laplace operator in class of bounded, convex and smooth domain. Moreover, we prove a Brunn-Minkowski-type inequality for the first eigenvalue to the weighted $p$-Laplace operator in the class of $C^2$ convex bodies in $\R^n$.
\end{abstract}

\section{Introduction}

In convex geometry, one of fundamental content is the Brunn-Minkowski inequality. One form of the Brunn-Minkwoski inequality states that if $K$ and $L$ are convex bodies (compact convex sets with nonempty interior) in $\R^n$ and $0<\lambda<1$, then
\[
V((1-\lambda)K+\lambda L)^{1/n} \geq (1-\lambda)V(K)^{1/n}+\lambda V(L)^{1/n},
\]
where $V$ and $+$ denote volume and vector sum. Equality holds precisely when $K$ and $L$ are equal up to translation and dilatation. Furthermore, the inequality also holds for any non-empty bounded measurable sets in $\R^n$. In 1978, Osserman \cite{Osserman1978} stated the classical Brunn-Minkwoski inequality. And Osserman emphasizes that the Brunn-Minkwoski inequality implies the classical isoperimetric inequality. The relationship between Brunn-Minkowski inequality and other inequalities in geometry and analysis, and some application, see Gardner \cite{Gardner2002}. As far as we know, there are many examples of functional satisfying a Brunn-Minkowski type inequality: the $(n-1)$-dimensional measure of boundary, the variational functionals, such as torsional rigidity, Newton Capacity, $p$-Capacity, the first eigenvalue of the Laplacian, see \cite{Borell1983,Borell1985,Brascamp1976,Caffarelli-Jerison1996,Colesanti2005,Andrea-Cuoghi2006,Andrea-Paolo2003} and so on.

The convexity properties of solution to PDEs have been extensively studied. In other words, the classical Brunn-Minkowski inequality is equivalent to the fact that $n$-dimensional volume raised to the power $1/n$ is concave. Specially, many geometric properties of the pseudo-Laplacian $\Delta_p:=-\mathrm{div} (|\nabla \cdot|^{p-2}\nabla \cdot)$ has been studied by many authors. The typical eigenvalue problem for the $p$-Laplace is to find $\lambda\in \R$ and $u\not\equiv 0\in W^{1,p}_0(\Omega)$, which is a weak solution of
\begin{equation}\nonumber
\left\{
\begin{aligned}
& -\text{div}(|\nabla u|^{p-2} \nabla u)=\lambda |u|^{p-2}u, \ \ \ \ \ \  \mathrm {in} \ \Omega,  \\
& u=0,  \ \ \ \ \ \  \ \ \ \ \ \ \ \ \  \ \ \ \ \mathrm {on} \ \partial \Omega,
\end{aligned}
\right.
\end{equation}
where $\Omega$ ia a bounded domain in $\R^n$ and $W^{1,p}_0(\Omega)$ is the Sobolev space $(p\geq 1)$. The existence, uniqueness and regularity of $(\lambda,u)$ have been studied in \cite{Azorero1987,Barles1988,Sakaguchi1987}. Azorero and Alonzo \cite{Azorero1987} proved the existence of an increasing sequence $(\lambda_k)_{k\geq 1}$ of the eigenvalues such that $\lambda_k\rightarrow +\infty$. Sakaguchi \cite{Sakaguchi1987} proved that there exists a unique, positive, weak solution to the first eigenvalue of above equation, where $\Omega$ is a bounded domain in $\R^n$ $(n\geq 2)$ with smooth boundary $\partial \Omega$. Furthermore, if $\Omega$ is convex, then $u$ is a log-concave function. Barles \cite{Barles1988} also gave some remarks on uniqueness results of the first eigenvalue of $p$-Laplace independently.  In this paper, our first result is to investigate an extension to the first eigenfunction of the weighted $p$-Laplace. Specially, for case $p=2$, the corresponding results are studied in \cite{Andrea-Paolo2024} very recently, from the view of viscosity solution and constant rank theorem.

Let $\Omega$ be a convex body of $\R^n$ with smooth boundary $\partial \Omega$, we prove that there exists a unique, positive weak solution $u\in W^{1,p}_0(\Omega,\gamma)$ to equation
\begin{equation}\label{PDE1}
\left\{
\begin{aligned}
& \text{div}(|\nabla u|^{p-2} \nabla u)-(x,\nabla u)|\nabla u|^{p-2}=-\lambda_{p,\gamma} |u|^{p-2}u, \ \  \mathrm {in} \ \Omega,  \\
& u=0,  \ \ \ \  \ \mathrm {on} \quad \partial \Omega, \ \ \ \ u>0\quad \ \ \mathrm {in}\quad \Omega,
\end{aligned}
\right.
\end{equation}
where $W^{1,p}_0(\Omega,\gamma)$ is the Gaussian Sobolev space $(1<p<+\infty$), see Section \ref{Se2}. Multiplying the above equation by $u$ and integrating by parts, one sees that
\[
\lambda_{p,\gamma}(\Omega)=\frac{\int_{\Omega} |\nabla u|^{p} d\gamma(x)}{\int_\Omega |u|^{p} d\gamma(x)}.
\]
Furthermore, the variational definition to the first eigenvalue $\lambda_{\gamma,p}(\Omega)$ is given by
\begin{equation}\label{varia-prob}
\lambda_{p,\gamma}(\Omega)=\inf \left\{\frac{\int_{\Omega} |\nabla u|^{p} d\gamma(x)}{\int_\Omega |u|^{p} d\gamma(x)},\ u\in\ W^{1,p}_0(\Omega,\gamma),\ u \not\equiv 0\right\}.
\end{equation}
By the Sobolev embedding constant, we know that $\lambda_{p,\gamma}>0$ when $|\Omega|>0$.

Under suitable assumptions, we prove that the first eigenfunction $u$ for weighted $p$-Laplace operator is log-concave.

\begin{theo}\label{T2}
Assume that $\Omega $ is a bounded convex domain in $\R^n$ with boundary of class $C^2$. Fixed a number $p>1$. Let $u\in\ W^{1,p}_0(\Omega,\gamma)$ be a positive, weak solution of \eqref{PDE1}, then the function
\[
w=-\ln u
\]
is convex in $\Omega$.
\end{theo}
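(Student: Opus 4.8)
The plan is to prove log-concavity of $u$, equivalently convexity of $w=-\ln u$, by combining a constant-rank/concavity-maximum-principle strategy with an approximation argument, since the eigenvalue problem \eqref{PDE1} is only degenerate-elliptic and $u$ need not be smooth where $\nabla u=0$. First I would rewrite the equation in terms of $w$. Substituting $u=e^{-w}$, so that $\nabla u=-e^{-w}\nabla w$ and the Hessian $D^2 u = e^{-w}(\nabla w\otimes\nabla w - D^2 w)$, into \eqref{PDE1} yields a fully nonlinear second-order equation for $w$ of the form
\[
F\bigl(D^2 w,\nabla w,w\bigr)=0\quad\text{in }\Omega,\qquad w\to+\infty\ \text{on }\partial\Omega,
\]
where the Gaussian drift term $(x,\nabla u)|\nabla u|^{p-2}$ contributes a lower-order piece $-(x,\nabla w)|\nabla w|^{p-2}$ and the eigenvalue term becomes the constant $-\lambda_{p,\gamma}$ (the $|u|^{p-2}u$ factors cancel with the $|\nabla u|^{p-2}$ scaling after dividing by $e^{-(p-1)w}$). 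The boundary blow-up of $w$ replaces the homogeneous Dirichlet condition and is exactly what one wants for a convexity argument.

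The core of the proof is a \emph{convexity maximum principle} for $w$. The classical approach, going back to Korevaar and refined by Kennington and Kawohl for the $p=2$ and quasilinear cases, introduces the concavity function
\[
C(x,y,\mu)=w\bigl(\mu x+(1-\mu)y\bigr)-\mu w(x)-(1-\mu)w(y),
\]
defined for $x,y\in\Omega$ and $\mu\in[0,1]$, and one wants to show $C\le 0$. I would argue by contradiction: if $w$ fails to be convex, then $\sup C>0$ is attained at an interior point $(x_0,y_0,\mu_0)$ with $x_0\ne y_0$ (the boundary blow-up of $w$, driving $C\to-\infty$ as either argument approaches $\partial\Omega$, rules out a boundary maximum, and the strict positivity/smoothness inside handles the diagonal and the endpoints $\mu_0\in\{0,1\}$). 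At such an interior maximum the first-order conditions force $\nabla w(x_0)=\nabla w(y_0)=\nabla w(z_0)$ where $z_0=\mu_0 x_0+(1-\mu_0)y_0$, and the second-order condition gives a relation among the Hessians of $w$ at the three points. Feeding these into the equation $F(D^2 w,\nabla w,w)=0$ evaluated at $x_0,y_0,z_0$ and using the ellipticity of $F$ in the Hessian argument together with the correct monotonicity of $F$ in $w$ itself should produce the contradiction $0<\sup C\le 0$; the key structural facts are that $F$ is degenerate elliptic in $D^2 w$ and that the dependence on the zeroth-order term $w$ (through the constant eigenvalue term) has the right sign.

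The main obstacle, and the reason a direct argument fails, is the degeneracy of the $p$-Laplace operator: where $\nabla u=0$ the equation loses ellipticity and $u$ is not $C^2$, so the pointwise Hessian computations above are not a priori justified. I would handle this by a regularization/approximation scheme. One standard device is to replace $|\nabla u|^{p-2}$ by $(|\nabla u|^2+\varepsilon^2)^{(p-2)/2}$, obtaining a uniformly elliptic problem whose positive first eigenfunction $u_\varepsilon$ is smooth (by standard elliptic regularity) and strictly positive in $\Omega$; the convexity maximum principle applies cleanly to $w_\varepsilon=-\ln u_\varepsilon$. Then I would establish uniform (in $\varepsilon$) $C^{1,\alpha}$ and local $W^{2,2}$ estimates for $u_\varepsilon$ so that $u_\varepsilon\to u$ and $\lambda_{p,\gamma}^\varepsilon\to\lambda_{p,\gamma}$, and pass to the limit in the inequality $C_\varepsilon\le 0$ to conclude $C\le 0$, hence that $w=-\ln u$ is convex. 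I expect verifying the correct sign in the zeroth-order term at the critical point and controlling the Gaussian drift term uniformly as $\varepsilon\to0$ to be the delicate points; the degeneracy itself, while the principal difficulty in spirit, is ultimately tamed by the approximation once the limit estimates are in hand.
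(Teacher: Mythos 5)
Your overall strategy---substitute $w=-\ln u$, apply Korevaar's concavity maximum principle to a regularized, uniformly elliptic problem, and pass to the limit---is exactly the architecture of the paper's proof. But your specific choice of regularization, replacing $|\nabla u|^{p-2}$ by $(|\nabla u|^2+\varepsilon^2)^{(p-2)/2}$, breaks the structural hypotheses that make the concavity maximum principle applicable, and this is a genuine gap. Korevaar's theorem requires an equation of the form $\sum a^{ij}(\nabla w)D_{ij}w = b(x,w,\nabla w)$ with principal coefficients depending only on $\nabla w$ and with $b$ suitably concave in $(x,w)$ and monotone in $w$. The reason the unregularized problem \eqref{PDE1} transforms so cleanly into \eqref{PDE2} is the exact $p$-homogeneity: substituting $u=e^{-w}$, every occurrence of $|\nabla u|^{p-2}=e^{-(p-2)w}|\nabla w|^{p-2}$ produces a power of $e^{-w}$ that cancels against the right-hand side. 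Your regularization destroys this homogeneity: $(|\nabla u|^2+\varepsilon^2)^{(p-2)/2}$ becomes $(e^{-2w}|\nabla w|^2+\varepsilon^2)^{(p-2)/2}$, so after the substitution both the principal part and the lower-order term carry explicit, non-concave $w$-dependence through $e^{-2w}$ factors, and the equation is no longer of Korevaar-admissible type. The paper avoids this by using Sakaguchi's regularization $(\varepsilon u^2+|\nabla u|^2)^{(p-2)/2}$ (see \eqref{varia-prob2}), which \emph{preserves} degree-$p$ homogeneity; then $v_\varepsilon=-\log u_\varepsilon$ satisfies $\mathrm{div}[(\varepsilon+|\nabla v_\varepsilon|^2)^{(p-2)/2}\nabla v_\varepsilon]=b(x,\nabla v_\varepsilon)$ with $b$ linear in $x$ and independent of $v_\varepsilon$, exactly the form to which \cite[Theorem 1.3]{Korevaar1} applies. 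An added benefit of the homogeneous regularization is that it remains a genuine eigenvalue problem, so the normalization $\|u_\varepsilon\|_{L^p(\Omega,\gamma)}=1$, the monotonicity of $\lambda^\varepsilon_{p,\gamma}$, and the identification of the limit with the unique positive eigenfunction (which you use implicitly and which requires the uniqueness theorem of Section \ref{Se3}) all go through.

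Your boundary treatment also diverges from the paper's, and here your route is plausible but underdeveloped rather than wrong. You exclude boundary maxima of the concavity function via the blow-up $w\to+\infty$ at $\partial\Omega$; this is in the spirit of the $p=2$ viscosity-solution argument in \cite{Andrea-Paolo2024}, and if carried out it would spare you any strong-convexity hypothesis. However, the exclusion is not automatic: along degenerate sequences where, say, $x\to\partial\Omega$ while $\mu\to 0$ simultaneously, the term $\mu w(x)$ can remain bounded and the concavity function tends to a limit $\le 0$ rather than $-\infty$, so one must analyze these sequences to show a positive supremum is attained at an interior point where the $C^2$ computations are legitimate. The paper instead works on the interior domains $\Omega_\delta$, uses Hopf's lemma (Proposition \ref{Hopf-Lemma}) to get $|\nabla u_\varepsilon|\ge \eta/2$ in a boundary strip, upgrades to uniform $C^{2+\beta}$ bounds there by Schauder estimates, and verifies Korevaar's tangent-plane boundary condition \cite[Lemma 2.4]{Korevaar1}---which forces an initial strong-convexity assumption on $\Omega$ and a final approximation of a general $C^2$ convex body by strongly convex domains. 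Either boundary mechanism can be made to work, but yours needs the degenerate-sequence analysis spelled out, and in any case the regularization must first be corrected as above.
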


 The first part of this paper is devoted to the existence, uniqueness and regularity for the weak solution of \eqref{PDE1}. The existence is proved by a classical variational method for generalized quasi-linear equation in divergence form. To obtain uniqueness, we need to prove the corresponding weak comparison theorem and Hopf's Lemma to weighted $p$-Laplace operator, and so on.  For the case $\Omega$ is a ball, it is easy to prove the existence of a radially symmetric eigenfunction by direct calculation, which is smooth except possibly at zero.

As for the global $C^{1,\alpha}$ $(0<\alpha<1)$ regularity to \eqref{PDE1}. First of all, we can get the $C^{1,\alpha}_{\mathrm{loc}}$ estimates by the results of Di Benedetto \cite{Benedetto1983} or Tolksdof\cite{Tolksdorf1984}. Then, we only need to prove the boundary estimates. By adapting a method of Tolksdorf \cite{Tolksdorf1983} which replaces the boundary estimates by an interior estimates via the Schwarz reflection principle, we can deduce the global estimate. Note that the proof consists essentially in obtaining boundary estimates in $C^{1,\alpha}$ by using the method of Tolksdorf \cite{Tolksdorf1983} and the estimates of Di Benedetto \cite{Benedetto1983}, see also \cite[p.69-71]{Barles1988} for more details.

On the other hand, to establish the log-concavity in Theorem \ref{T2}, we need to use the Korevaar's concavity maximum principle, but this method works only for classical solutions. However, in the case of the pseudo-Laplacian, there is generally only weak solutions, since the pseudo-Laplacian is degenerate elliptic. Precisely, the bounded solutions to the pseudo-Laplacian belong to $C^{1+\alpha}(\overline{\Omega})$ for some $\alpha$ ($0<\alpha<1$) and not always belong to $C^2(\Omega)$. Therefore, we can not directly apply concavity maximum principle to our problem, that is, we need to consider a regularized problem, then using a approximation process to come back to our original problem.

In the following, for $n$-dimensional convex bodies, we prove a Brunn-Minkowski type inequality for the first eigenvalue of the weighted $p$-Laplace operator.

\begin{theo}\label{T1}
Let $\Omega_0,\Omega_1$ be convex bodies in $\R^n$ with boundary of class $C^2$, and $p>1$. Let $t\in [0,1]$ and set
\[
\Omega_t=(1-t)\Omega_0+\Omega_1.
\]
Then, the following inequality holds
\[
\lambda_{p,\gamma}(\Omega_t)\leq (1-t)\lambda_{p,\gamma}(\Omega_0)+t\lambda_{p,\gamma}(\Omega_1).
\]
\end{theo}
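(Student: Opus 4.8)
The plan is to combine the variational characterization \eqref{varia-prob} with the log-concavity supplied by Theorem \ref{T2}, by building an admissible test function on $\Omega_t$ out of the eigenfunctions on $\Omega_0$ and $\Omega_1$. Write $\lambda_i=\lambda_{p,\gamma}(\Omega_i)$ and let $u_i$ be the corresponding first eigenfunction, so that $w_i=-\ln u_i$ is convex on $\Omega_i$ by Theorem \ref{T2}. Set $\Lambda_t=(1-t)\lambda_0+t\lambda_1$ and define the Minkowski (inf-)convolution
\[
w_t(x)=\inf\bigl\{(1-t)w_0(x_0)+t\,w_1(x_1):\ x_0\in\Omega_0,\ x_1\in\Omega_1,\ (1-t)x_0+tx_1=x\bigr\},
\]
which is convex on $\Omega_t$; then $u_t=e^{-w_t}$ is log-concave. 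A supporting-hyperplane argument shows that any decomposition of a boundary point $x\in\partial\Omega_t$ forces $x_0\in\partial\Omega_0$ and $x_1\in\partial\Omega_1$, so $u_t=0$ on $\partial\Omega_t$; together with the $C^{1,\alpha}(\overline{\Omega_i})$ regularity of the $u_i$ this places $u_t\in W^{1,p}_0(\Omega_t,\gamma)$. The whole problem is thereby reduced to showing that the Rayleigh quotient of $u_t$ does not exceed $\Lambda_t$.

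First I would pass to the logarithmic variable. A direct computation shows that $u=e^{-w}$ solves \eqref{PDE1} with eigenvalue $\lambda$ precisely when
\[
F[w]:=\mathrm{div}\bigl(|\nabla w|^{p-2}\nabla w\bigr)-(p-1)|\nabla w|^{p}-(x,\nabla w)|\nabla w|^{p-2}=\lambda ,
\]
so that $F[w_0]=\lambda_0$ and $F[w_1]=\lambda_1$ on $\Omega_0$ and $\Omega_1$. The heart of the matter is the pointwise inequality $F[w_t]\le\Lambda_t$. At a point $x=(1-t)x_0+tx_1$ realizing the infimum, the first-order optimality conditions give the common gradient $\xi:=\nabla w_t(x)=\nabla w_0(x_0)=\nabla w_1(x_1)$, while the Legendre-duality relation for inf-convolutions yields
\[
\bigl(D^2w_t(x)\bigr)^{-1}=(1-t)\bigl(D^2w_0(x_0)\bigr)^{-1}+t\bigl(D^2w_1(x_1)\bigr)^{-1}.
\]
Because $\xi$ is common and $x=(1-t)x_0+tx_1$, the first-order terms of $F$ combine exactly: the term $-(p-1)|\xi|^p$ is unchanged, and the linearity of the Gaussian drift in $x$ gives $-(x,\xi)|\xi|^{p-2}=(1-t)\bigl(-(x_0,\xi)|\xi|^{p-2}\bigr)+t\bigl(-(x_1,\xi)|\xi|^{p-2}\bigr)$. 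For the second-order term, writing $A=|\xi|^2 I+(p-2)\,\xi\otimes\xi\succ 0$ (here $p>1$ is used), one has $\mathrm{div}(|\nabla w|^{p-2}\nabla w)=|\xi|^{p-4}\operatorname{tr}(A\,D^2w)$ at such a point; since $M\mapsto M^{-1}$ is operator convex and $A\succeq 0$,
\[
\operatorname{tr}\!\bigl(A\,D^2w_t(x)\bigr)\le(1-t)\operatorname{tr}\!\bigl(A\,D^2w_0(x_0)\bigr)+t\operatorname{tr}\!\bigl(A\,D^2w_1(x_1)\bigr).
\]
Adding the three contributions gives $F[w_t](x)\le(1-t)\lambda_0+t\lambda_1=\Lambda_t$.

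Translating $F[w_t]\le\Lambda_t$ back through $u_t=e^{-w_t}$ (using $\mathrm{div}(|\nabla u|^{p-2}\nabla u)-(x,\nabla u)|\nabla u|^{p-2}=-u^{p-1}F[w]$) shows that $u_t$ is a nonnegative subsolution,
\[
\mathrm{div}\bigl(|\nabla u_t|^{p-2}\nabla u_t\bigr)-(x,\nabla u_t)|\nabla u_t|^{p-2}+\Lambda_t\,u_t^{p-1}\ge0 ,
\]
equivalently $\mathrm{div}\bigl(e^{-|x|^2/2}|\nabla u_t|^{p-2}\nabla u_t\bigr)+\Lambda_t\,u_t^{p-1}e^{-|x|^2/2}\ge0$, since the Gaussian weight converts the drift into divergence form. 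Testing this inequality against $u_t\ge0$ and integrating by parts over $\Omega_t$ (the boundary term vanishes because $u_t=0$ on $\partial\Omega_t$) yields
\[
\int_{\Omega_t}|\nabla u_t|^{p}\,d\gamma\le\Lambda_t\int_{\Omega_t}u_t^{p}\,d\gamma ,
\]
so the Rayleigh quotient of $u_t$ is at most $\Lambda_t$. By \eqref{varia-prob} this gives $\lambda_{p,\gamma}(\Omega_t)\le\Lambda_t=(1-t)\lambda_0+t\lambda_1$, which is the assertion.

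The main obstacle is regularity. The inf-convolution $w_t$ is only convex, hence locally Lipschitz and twice differentiable merely almost everywhere (Alexandrov), so the pointwise computation of $F[w_t]$ and the Hessian identity are a priori justified only a.e. and only where the optimal decomposition is nondegenerate; moreover the operator is singular where $\nabla w_t=0$. I would make the argument rigorous exactly as in the proof of Theorem \ref{T2}: replace \eqref{PDE1} by the regularized, uniformly elliptic problem, for which the eigenfunctions are $C^2$ and strictly log-concave, carry out the computation there (now everywhere, with positive-definite Hessians so the matrix inverses are legitimate), and pass to the limit; alternatively one may read $F[w_t]\le\Lambda_t$ in the viscosity sense---inf-convolutions preserve viscosity supersolutions---and invoke the equivalence between viscosity and weak (sub/super)solutions for the weighted $p$-Laplacian to justify the integration by parts. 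A secondary point to verify is the boundary decay of $u_t$, needed to place $u_t$ in $W^{1,p}_0(\Omega_t,\gamma)$ and to discard the boundary term.
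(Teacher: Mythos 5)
Your strategy is the same as the paper's: build the test function $e^{-\tilde w}$ from the inf-convolution of $w_i=-\ln u_i$ (convex by Theorem \ref{T2}), exploit the matched gradients and the harmonic-mean Hessian identity at optimal decompositions, and close via the Rayleigh quotient \eqref{varia-prob}. Your trace inequality with $A=|\xi|^2 I+(p-2)\,\xi\otimes\xi$ and operator convexity of matrix inversion is precisely the content of Lemma \ref{L2} (quoted by the paper from Colesanti--Cuoghi--Salani), here re-derived directly, and your remark that the Gaussian drift splits linearly along $x=(1-t)x_0+tx_1$ is exactly the point at which the paper's computation absorbs the terms $(x_\varepsilon,\nabla w_0(x_\varepsilon))$ and $(y_\varepsilon,\nabla w_1(y_\varepsilon))$ into $(z,\nabla\tilde w_\varepsilon(z))$; this lack of homogeneity is the only genuinely new feature relative to the unweighted case. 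So the core pointwise inequality $F[w_t]\le\Lambda_t$ is correct and is the paper's argument.

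The gap is in the deferred regularity step, and your remedy (a) would not go through as stated. The paper does \emph{not} re-solve the regularized eigenvalue problem of Section \ref{Se4}: the log-concavity obtained there (Proposition \ref{prop4}) is not strict, and strict convexity of the $w_i$ is exactly what Lemma \ref{L1} requires for the $C^1$ regularity of the inf-convolution and the Hessian inversion formula; moreover the regularized operator $(\varepsilon u^2+|\nabla u|^2)^{(p-2)/2}$ changes the zeroth- and first-order structure, so the exact combination of terms you rely on would be perturbed in an uncontrolled way. What the paper actually does is perturb the already-constructed solutions, setting $w_{i,\varepsilon}=w_i+\varepsilon|x|^2/2$, which are strictly convex and still $C^2$ off the critical sets $\bar{\C}_i$, and then tracks the explicit error $\tilde F_\varepsilon$ (pointwise $\to 0$, locally uniformly bounded) through the differential inequality \eqref{BMI-form10}. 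Second, your integration by parts over all of $\Omega_t$ glosses the critical set: the subsolution inequality is only available on $\Omega_t\setminus\bar{\C}_t$, where $\C_t=\{\nabla\tilde w=0\}$, and $\tilde u$ need not be $C^2$ across $\partial\C_t$, so one cannot simply test against $u_t$ on $\Omega_t$ and discard only the outer boundary term. The paper identifies $\C_t=(1-t)\C_0+t\C_1$ via conjugate functions (formula \eqref{BMI-form8}), integrates over shells $T_j=\overline A_j\setminus B_j$ with $B_j\supset\C_t$, kills the inner boundary term using $\nabla\tilde u\to 0$ on $\partial\C_t$ and the outer one using $\tilde u\to 0$ on $\partial\Omega_t$, and only then lets $\varepsilon\to 0$ and $j\to\infty$. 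Your viscosity alternative (b) is plausible in principle but would require a weak--viscosity equivalence for the weighted $p$-Laplacian that the paper does not develop; substituting the quadratic-perturbation-plus-excision scheme for your deferred step recovers the paper's proof verbatim.
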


To establish Theorem \ref{T1}, we shall need the regularity and log-concavity properties, which are obtained in Theorem \ref{T2} or Section \ref{Se3}. Finally, we remark that due to the lack of homogeneity of the eigenvalue $\lambda_{p,\gamma}(\Omega)$, we can not get the weaker version for Brunn-Minkowski type inequality. This is a difference between the classical and Gaussian case.

The organization of the paper is as follows. In Section \ref{Se2}, we list some basic facts and notions in convex geometry and partial differential equation. In Section \ref{Se3}, we prove the existence, uniqueness and regularity to the first eigenfunction to the weighted $p$-Laplace operator, under the smooth assumption. In Section \ref{Se4}, we prove the log-concavity of the first eigenfunction to weighted $p$-Laplace. In Section \ref{Se5}, we are devoted to solving the Brunn-Minkowski type inequality for the first eigenvalue to weighted $p$-Laplace..

\section{preliminaries}\label{Se2}

Let $\mathbb{R}^n$ be the $n$-dimensional Euclidean space, $x$ is a point in $\R^n$, $|x|$ is the Euclidean norm of $x$ and $dx$ denotes integration with respect to Lebesgue measure in $\R^n$. For $x\in \R^n$ and $r>0$, we denote by $B_{r}(x)$ the open ball of radius centered in $x$. If $\Omega\subset \R^n$, we denote the closure, interior, and the boundary of $\Omega$ by $\overline{\Omega}$, $\mathrm{int} \Omega$ and $\partial \Omega$. Let $C_{c}^{\infty}(\Omega)$ be the set of functions from $C^{\infty}(\Omega)$ having compact support in $\Omega$. We denote the support set of a real-value function by $\mathrm{spt}(u)$. Let $\mathcal{K}^n$ be the class of convex bodies in $\mathbb{R}^n$. We call $\Omega\in \mathcal{K}^n$ strongly convex, if all its the principal curvatures of $\partial \Omega$ are positive.

Let $\Omega_1, \Omega_2 \in \mathcal{K}^n $, then for $\lambda \in [0,1]$, the convex linear combination of $\Omega_1$ and $\Omega_2$ is defined by
\[
\lambda \Omega_1 +(1-\lambda)\Omega_2=\{\lambda x+(1-\lambda)y:\ x\in \Omega_1,\ y\in \Omega_2 \}.
\]
For a generic smooth function $u$, by a direct calculation, we can write as follows:
\begin{equation}\label{p-Lapla-form}
\Delta_p u(x)=|\nabla u(x)|^{p-2} \Delta u(x)+(p-2)|\nabla u|^{p-4}\sum_{i,j=1}^n  \frac{\partial^2 u}{\partial x_i \partial x_j}\frac{\partial u}{\partial x_i} \frac{\partial u}{\partial x_j}.
\end{equation}
Moreover, for a point $x$ such that $\nabla u(x)\neq 0$, by a direct calculation, we can write
\begin{equation}\label{p-Lapla-form2}
\Delta_p u(x)=|\nabla u(x)|^{p-2}\left( \Delta u(x)+(p-2)\frac{\partial^2 u(x)}{\partial n^2}\right),
\end{equation}
where $n=\nabla u (x) /|\nabla u(x)|$ and
\[
\frac{\partial^2 u(x)}{\partial n^2}=\langle D^2u(x)n,n\rangle.
\]
Here, $|\nabla u|^{p-2}\nabla u$ is defined to be zero at each $x$ where $\nabla u(x)=0$. Note that if $u$ has continuous second partial derivative in $\Omega$, then $\Delta_p=0$ in weak sense and the divergence theorem imply that $\Delta_pu=0$ in point-wise sense in $\Omega$.

We denote the Gauss probability space by $( \mathbb{R}^{n}, \gamma)$ with the measure
$$
\gamma(\Omega)= \frac{1}{(2\pi)^{n/2}} \int_{\Omega} e^{-|x|^2/2} dx,
$$
for any measurable set $\Omega\subseteq \mathbb{R}^{n}$, $d\gamma$ stands for integration with respect to $\gamma$, and $d\gamma_{\partial \Omega}$ is the $(n-1)$-dimensional hausdorff measure with respect to Gaussian measure $\gamma$.

In this paper, we define the weighted $p$-Laplace operator as follows:
\begin{equation}\label{OU2}
\Delta_{p,\gamma} u:= \mathrm{div} (|\nabla u|^{p-2}\nabla u)-(x,\nabla u)|\nabla u|^{p-2},
\end{equation}
which also satisfies the integration by parts identity for the standard Gaussian measure:
\[
\int_{\Omega} \upsilon \Delta_{p,\gamma} u d\gamma=-\int_\Omega |\nabla u|^{p-2}\langle \nabla u, \nabla \upsilon \rangle d\gamma+\int_{\partial \Omega} \upsilon |\nabla u|^{p-2} \langle \nabla u, n_x \rangle d\gamma_{\partial \Omega},
\]
for functions $u,\upsilon\in W^{1,p}(\Omega,\gamma)$, and a Lipschitz domain $\Omega$.

Based on the integration by parts, we define the weak solution to \eqref{PDE1}, that is, for $\varphi\in C^{\infty}_c(\Omega)$,
\begin{equation}\label{weak-solution}
\int_{\Omega} |\nabla u|^{p-2} \nabla u \cdot \nabla \varphi\ d\gamma (x) = \lambda_{p,\gamma} \int_{\Omega}|u|^{p-2}u\cdot \varphi \ d\gamma (x).
\end{equation}
Indeed, by a compactness, it is obvious that there exists a weak solution for the variational problem \eqref{varia-prob}.

Let $\Omega\subseteq \R^n$ be a measurable set, and $|\Omega|$ be the Lebesgue measure. Given $1<p<+\infty$, let $L^p(\Omega)$ denote the usual Lebesgue space of functions $u$ on $\Omega$ with norm
\[
\| u\|^p_{p}:=\int_{\Omega} |u(x)|^p dx<+\infty.
\]
Let $W^{1,p}(\Omega)$ denote the Sobolev space whose functional elements $u$ and their distributional partial derivatives $u_{x_i}$, $1\leq i\leq n$ belong to $L^{p}(\Omega)$ with norm
\[
\|u \|^p_{1,p}=\|u\|^p_p+\mathop{\sum}_{i=1}^n \| u_{x_i}\|^p_p.
\]
Furthermore, $W^{1,p}_0(\Omega)$ denotes the Sobolev space that the closure of $C^\infty_c(\Omega)$ in $W^{1,p}(\Omega)$. In the following, we define the Sobolev space with Gauss weight. For $p>1$, we denote $L^p(\Omega,\gamma)$ the space of all measurable functions $u$ such that
\begin{equation}\nonumber
L^p(\Omega,\gamma)=\left\{ u:\Omega\rightarrow \R:\ \|u\|^{p}:=\int_{\Omega} |u(x)|^p d\gamma (x)<+\infty \right\},
\end{equation}
and
\begin{equation}\nonumber
W^{1,p}(\Omega,\gamma)=\left\{ u:\Omega\rightarrow \R \ \| u\|^{p}:=\int_{\Omega} |\nabla u(x)|^{p} +|u(x)|^p d\gamma (x)<+\infty \right\}.
\end{equation}
Furthermore, $W^{1,p}_0(\Omega,\gamma)$ denotes the Sobolev space that the closure of $C^\infty_c(\Omega)$ in $W^{1,p}(\Omega,\gamma)$. When $\Omega$ is bounded domain, then function $e^{-|x|^2/2}$ has positive upper and lower bounds in $\Omega$, it is obvious that $W^{1,p}_0(\Omega,\gamma)=W^{1,p}_0(\Omega)$, that is, if $\varphi\in W^{1,p}_0(\Omega,\gamma)$, then $\varphi\in W^{1,p}_0(\Omega)$; Vice versa. However, we only have $W_0^{1,p}(\Omega)\subset W_0^{1,p}(\Omega,\gamma)$ when $\Omega$ is a unbounded domain. (cf. \cite{Tero1994} and its references).

For an open, bounded, Lipschitz domain $\Omega$, we have the following Sobolev embedding. When $p>n$, we have $W^{1,p}_0(\Omega)\hookrightarrow C^{0,\alpha}(\overline{\Omega})$ for some $0<\alpha<1$. For $p=n$, $W^{1,p}_0(\Omega)\hookrightarrow L^{p}(\overline{\Omega})$ for all $p\in [1,+\infty)$. For $1<p<n$, $W^{1,p}_0(\Omega)\hookrightarrow L^{q}(\Omega)$ where $1 \leq q \leq p^{*}$, $p^{*}=np/{(n-p)}$. Moreover, $W^{1,p}_0(\Omega)\hookrightarrow L^p(\Omega)$ is compact for $p\geq 1$, (cf. \cite{Brezis2011,Radulescu2019}).

\section{Existence, uniqueness and regularity of weak solution }\label{Se3}

In this section, we prove the existence, positivity, uniqueness and regularity of weak solution to equation \eqref{PDE1}.
\begin{theo}\label{weak-solution2}
Given $p>1$, let $\Omega$ be a bounded domain in $\R^n$ $(n\geq 2)$ with boundary of class $C^2$. Then there exists a non-trivial, non-negative weak solution $u\in W^{1,p}_0(\Omega,\gamma)$ to the eigenvalue problem \eqref{PDE1}. Moreover, any non-trivial solution is positive or negative in $\Omega$ and belongs to $C^{1+\alpha}(\overline{\Omega})$ for some $\alpha$ $(0<\alpha<1)$. Furthermore, if the boundary $\partial \Omega$ is connected, the solution is unique up to a multiplicative constant.
\end{theo}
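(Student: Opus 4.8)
The plan is to treat existence, regularity, fixed-sign positivity, and uniqueness in that order, exploiting throughout that on a bounded domain the Gaussian weight $e^{-|x|^2/2}$ has positive upper and lower bounds, so that $W^{1,p}_0(\Omega,\gamma)=W^{1,p}_0(\Omega)$ with equivalent norms. First I would obtain a minimizer of the Rayleigh quotient \eqref{varia-prob} by the direct method. By $p$-homogeneity it suffices to minimize $\int_\Omega |\nabla u|^p\,d\gamma$ over the constraint set $\{u\in W^{1,p}_0(\Omega,\gamma):\int_\Omega |u|^p\,d\gamma=1\}$. Taking a minimizing sequence $(u_k)$, the weight bounds make it bounded in $W^{1,p}_0(\Omega)$; since $1<p<\infty$ this space is reflexive, so after passing to a subsequence $u_k\rightharpoonup u$ weakly in $W^{1,p}_0(\Omega)$ and, by the compact embedding $W^{1,p}_0(\Omega)\hookrightarrow L^p(\Omega)$, strongly in $L^p$. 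The constraint therefore passes to the limit, giving $u\not\equiv 0$, while weak lower semicontinuity of the convex functional $u\mapsto\int_\Omega|\nabla u|^p\,d\gamma$ shows $u$ is a minimizer. Replacing $u$ by $|u|$ leaves both integrals unchanged, so $u$ may be taken non-negative. Writing the Euler--Lagrange equation of the constrained problem recovers exactly the weak formulation \eqref{weak-solution} with $\lambda_{p,\gamma}$ equal to the minimal value, which is positive by the Sobolev embedding as noted above. This yields a non-trivial, non-negative weak solution.

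For regularity I would proceed as signalled in the introduction. The extra term $(x,\nabla u)|\nabla u|^{p-2}$ is of lower (first) order and smooth in $x$, so the equation falls within the framework of quasilinear degenerate equations in divergence form. Interior $C^{1,\alpha}_{\mathrm{loc}}$ estimates then follow from DiBenedetto \cite{Benedetto1983} and Tolksdorf \cite{Tolksdorf1984}. For the boundary estimates I would follow Tolksdorf \cite{Tolksdorf1983}, replacing boundary estimates by interior ones via a Schwarz reflection across the $C^2$ boundary together with suitable barriers; combining the two gives $u\in C^{1+\alpha}(\overline{\Omega})$ for some $0<\alpha<1$.

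Having continuity from the regularity step, I would deduce positivity and fixed sign from the strong maximum principle and Harnack inequality for $p$-Laplace-type operators, which are unaffected by the smooth positive Gaussian factor. Since $\Delta_{p,\gamma}u=-\lambda_{p,\gamma}u^{p-1}\le 0$ for the non-negative minimizer, either $u\equiv 0$ or $u>0$ in $\Omega$, and the former is excluded. For an arbitrary non-trivial solution $u$, the function $|u|$ is again a minimizer, hence a solution, so $|u|>0$ in $\Omega$; continuity then forces $u$ to keep a constant sign, i.e. $u$ is positive or negative throughout $\Omega$.

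The uniqueness assertion is the delicate point and I expect it to be the main obstacle. I would establish it along the route indicated in the introduction, first proving a weak comparison principle and a Hopf boundary lemma for $\Delta_{p,\gamma}$, both of which should survive the addition of the first-order Gaussian drift. Given two positive first eigenfunctions $u,v$ normalized in $L^p(\Omega,\gamma)$, Hopf's lemma yields non-vanishing inward normal derivatives on $\partial\Omega$, and connectedness of $\partial\Omega$ is precisely what lets one conclude that $u/v$ extends to a positive continuous function up to $\partial\Omega$, hence is globally bounded above and below. One can then run a Picone-type argument, using $u^p/v^{p-1}$ and $v^p/u^{p-1}$ as admissible test functions in the Gaussian-weighted weak formulation and invoking the weighted Picone inequality
\[
|\nabla u|^p\ \ge\ |\nabla v|^{p-2}\,\nabla v\cdot\nabla\!\left(\frac{u^p}{v^{p-1}}\right),
\]
which holds with equality precisely when $u$ and $v$ are proportional, to force $u=cv$. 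The genuine difficulty lies in verifying that this machinery goes through in the degenerate weighted setting, and in particular that the test functions $u^p/v^{p-1}$ are admissible up to the boundary; this is exactly where the $C^{1+\alpha}$ regularity, the Hopf lemma, and the connectedness of $\partial\Omega$ are all brought to bear.
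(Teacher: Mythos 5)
Your proposal is correct, and on existence, regularity and sign it runs along the same lines as the paper: the paper likewise reduces to the Euclidean divergence-form equation (Proposition \ref{Transform}), obtains a minimizer by the direct method (citing Ladyzhenskaya--Ural'tseva), replaces $u$ by $|u|$, and gets positivity from Trudinger's Harnack inequality; for regularity it first runs a Moser-type $L^\infty$ bootstrap with test functions $\min\{u,M\}^{kp+1}$ --- a preliminary step you elide but which is needed before invoking DiBenedetto/Tolksdorf when $1<p<n$ --- then gets $C^{0,\beta}(\overline{\Omega})$, interior $C^{1,\alpha}$ from Tolksdorf, and boundary regularity via Tolksdorf's reflection method as in Barles, exactly as you indicate. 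Where you genuinely diverge is uniqueness. The paper follows Sakaguchi's sliding scheme: after proving the weak comparison principle (Proposition \ref{weak-comparison-princ}) and a Hopf lemma (Proposition \ref{Hopf-Lemma}, built on the explicit radial barrier of Proposition \ref{P1-Cap}), it sets $b=\sup\{\mu\in\R:\ u_1-\mu u_2>0 \text{ in } \Omega\}$, shows by contradiction that the contact set of $u_1-bu_2$ is nonempty and meets a boundary neighborhood $\Gamma$ on which $|\nabla u_i|\geq\delta$, linearizes the difference of the two equations through the mean value theorem into a uniformly elliptic operator $L$ on $\Gamma$, and then propagates $u_1\equiv bu_2$ through $\Gamma$ by the classical Hopf boundary point lemma and strong maximum principle --- this propagation is precisely where connectedness of $\partial\Omega$ (hence of $\Gamma$) enters. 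Your Picone/D\'iaz--Sa\`a route is a legitimate and shorter alternative: both eigenfunctions solve \eqref{PDE1} with the \emph{same} $\lambda_{p,\gamma}$, the pointwise Picone inequality is insensitive to the smooth positive weight, the test function $u^p/v^{p-1}$ is admissible thanks to the Hopf lemma plus $C^{1,\alpha}(\overline{\Omega})$ (which make $u/v$ bounded and the test function Lipschitz, vanishing on $\partial\Omega$), and equality a.e.\ forces $\nabla(u/v)\equiv 0$, hence $u=cv$ on the connected set $\Omega$. One small misattribution, which is harmless rather than a gap: the boundedness of $u/v$ up to the boundary follows from the Hopf lemma on each boundary component and does not use connectedness of $\partial\Omega$; in fact your argument never needs that hypothesis, since the Picone equality case only uses connectedness of $\Omega$ itself. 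So your route buys brevity and actually dispenses with the boundary-connectedness assumption, whereas the paper's sliding argument cannot; the paper's approach, in exchange, develops the comparison principle and Hopf lemma as free-standing tools that it reuses later (Propositions \ref{prop1} and \ref{prop2} in the log-concavity argument).
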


\begin{prop}\label{Transform}
Given $p>1$, let $\Omega$ be a bounded domain in $\R^n$ $(n\geq 2)$ with boundary of class $C^2$. If $u\in W_0^{1,p}(\Omega,\gamma)$ is a weak solution of \eqref{PDE1}, then $u\in W_0^{1,p}(\Omega)$ is a weak solution of the following equation
\begin{equation}\label{PDE3}
\left\{
\begin{aligned}
&\mathrm{div} (e^{-|x|^2/2}|\nabla u|^{p-2} \nabla u) =-\lambda_{p,\gamma}e^{-|x|^2/2} |u|^{p-2}u , \ \ \  \mathrm {in}\ \ \Omega,  \\
& u=0,  \ \ \ \ \ \  \ \ \ \ \ \ \ \ \  \ \ \ \ \ \ \ \ \ \mathrm {on} \ \partial \Omega.
\end{aligned}
\right.
\end{equation}
And, vice versa, every solution of \eqref{PDE3} is a weak solution of \eqref{PDE1}.
\end{prop}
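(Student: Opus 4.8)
The plan is to verify directly that the weak formulation \eqref{weak-solution} of \eqref{PDE1} coincides, term for term, with the weak formulation of \eqref{PDE3}; the equivalence is then an algebraic consequence of the explicit Gaussian density together with the product rule for the divergence, with no analytic difficulty.

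First I would recall that $d\gamma(x)=(2\pi)^{-n/2}e^{-|x|^2/2}\,dx$, so that, after substituting this density into \eqref{weak-solution} and cancelling the constant factor $(2\pi)^{-n/2}$, a weak solution $u$ of \eqref{PDE1} is characterized by
\[
\int_{\Omega} e^{-|x|^2/2}|\nabla u|^{p-2}\nabla u\cdot\nabla\varphi\,dx=\lambda_{p,\gamma}\int_{\Omega}e^{-|x|^2/2}|u|^{p-2}u\,\varphi\,dx
\]
for every $\varphi\in C_c^{\infty}(\Omega)$. On the other hand, multiplying \eqref{PDE3} by such a $\varphi$ and integrating by parts — the boundary contribution vanishing because $\varphi$ has compact support in $\Omega$ — yields exactly the same identity. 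Hence $u$ is a weak solution of \eqref{PDE1} if and only if it is a weak solution of \eqref{PDE3}, which gives both directions of the proposition at once.

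The computation underlying this weak equivalence, which I would record for clarity, is the strong-form identity
\[
\mathrm{div}\!\left(e^{-|x|^2/2}|\nabla u|^{p-2}\nabla u\right)=e^{-|x|^2/2}\left[\mathrm{div}(|\nabla u|^{p-2}\nabla u)-(x,\nabla u)|\nabla u|^{p-2}\right],
\]
obtained from $\nabla e^{-|x|^2/2}=-x\,e^{-|x|^2/2}$ by the product rule. Dividing both sides of \eqref{PDE3} by the strictly positive factor $e^{-|x|^2/2}$ transforms it into \eqref{PDE1}, exhibiting the drift term $(x,\nabla u)|\nabla u|^{p-2}$ as precisely the contribution of the logarithmic gradient of the Gaussian weight.

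Finally, concerning the function spaces: since $\Omega$ is bounded, the weight $e^{-|x|^2/2}$ has positive upper and lower bounds on $\Omega$, so $W_0^{1,p}(\Omega,\gamma)=W_0^{1,p}(\Omega)$ as already noted in Section \ref{Se2}; thus $u$ may be regarded as an element of either space and the admissible test class is the common $C_c^{\infty}(\Omega)$ in both formulations. I do not expect a genuine obstacle here: the only care required is to confirm that the constant $(2\pi)^{-n/2}$ and the weight cancel as claimed, and that the integration-by-parts boundary term indeed vanishes under the compact-support assumption on $\varphi$.
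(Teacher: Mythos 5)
Your proposal is correct and follows essentially the same route as the paper: identify $W_0^{1,p}(\Omega,\gamma)=W_0^{1,p}(\Omega)$ on the bounded domain, substitute $d\gamma(x)=(2\pi)^{-n/2}e^{-|x|^2/2}\,dx$ into the weak formulation \eqref{weak-solution}, and observe that after cancelling the constant the two weak formulations coincide term by term. Your additional strong-form product-rule identity is a harmless clarifying remark not present in (and not needed by) the paper's argument, which is stated only at the level of weak formulations.
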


\begin{proof}[\bf Proof] For any $\psi\in W_0^{1,p}(\Omega) $, we need to prove that
\[
\int_{\Omega} e^{-|x|^2/2} |\nabla u|^{p-1} \nabla  u \cdot \nabla \psi dx =\lambda_{p,\gamma} \int_{\Omega} e^{-|x|^2/2} |u|^{p-2}u \cdot \psi dx.
\]
Since $W^{1,p}_0(\Omega,\gamma)=W^{1,p}_0(\Omega)$, we have $\psi\in W_0^{1,p}(\Omega,\gamma)$. Since $u$ is a weak solution of \eqref{PDE1}, we have
\begin{equation}
\int_{\Omega} |\nabla u|^{p-1} \nabla u  \cdot \nabla\psi d\gamma (x)=\lambda_{p,\gamma} \int_{\Omega} |u|^{p-2}u e^{-|x|^2/2} \cdot \nabla\psi d\gamma (x).
 \end{equation}
This completes its proof. The vice versa is analogous.
\end{proof}

In the following, we prove the weak comparison principle for the weighted $p$-Laplace operator.
\begin{prop}\label{weak-comparison-princ}
Given $p>1$, let $\Omega$ be a bounded domain in $\R^n$ $(n\geq 2)$ with boundary of class $C^2$. Let $u_1,u_2\in W^{1,p}(\Omega,\gamma)$ satisfy
\begin{equation}
\int_{\Omega} |\nabla u_1|^{p-1} \nabla u_1\nabla \varphi d\gamma(x) \leq \int_{\Omega} |\nabla u_2|^{p-1} \nabla u_2 \cdot \nabla \varphi d\gamma(x)
\end{equation}
for all non-negative $\varphi\in W_{0}^{1,p}(\Omega,\gamma)$, that is
\[
-\Delta_p u_1+(x,\nabla u_1)|\nabla u_1|^{p-2} \leq -\Delta_p u_2+(x,\nabla u_2)|\nabla u_2|^{p-2} \quad \mathrm{in}\ \Omega
\]
in the weak sense. Then the inequality
\[
u_1\leq u_2,\ \ \ \mathrm{on}\ \ \partial \Omega
\]
implies that
\[
u_1 \leq u_2,\ \ \ \mathrm{in}\ \ \Omega.
\]
\end{prop}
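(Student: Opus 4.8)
The plan is to test the hypothesized inequality against the positive part of the difference and to exploit the strict monotonicity of the map $\xi\mapsto|\xi|^{p-2}\xi$. Concretely, I would take the test function $\varphi=(u_1-u_2)^+:=\max\{u_1-u_2,0\}$. Because $u_1\le u_2$ on $\partial\Omega$, this function vanishes on the boundary, so $\varphi\in W_0^{1,p}(\Omega,\gamma)$, and it is manifestly non-negative, hence admissible in the assumption. By the standard chain rule for Sobolev truncations one has $\nabla\varphi=(\nabla u_1-\nabla u_2)\,\mathbf{1}_{\{u_1>u_2\}}$ almost everywhere, and $\nabla\varphi=0$ a.e. on $\{u_1\le u_2\}$.

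Substituting $\varphi$ into the assumed inequality and rearranging the two sides yields
\[
\int_{\{u_1>u_2\}}\bigl(|\nabla u_1|^{p-2}\nabla u_1-|\nabla u_2|^{p-2}\nabla u_2\bigr)\cdot(\nabla u_1-\nabla u_2)\,d\gamma(x)\le 0.
\]
The key algebraic fact is the monotonicity inequality: for all $a,b\in\rn$,
\[
\bigl(|a|^{p-2}a-|b|^{p-2}b\bigr)\cdot(a-b)\ge 0,
\]
with equality if and only if $a=b$ (quantitatively, $\ge c_p|a-b|^p$ for $p\ge 2$, and $\ge c_p|a-b|^2/(|a|+|b|)^{2-p}$ for $1<p<2$ when $|a|+|b|>0$). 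Since the Gaussian density $e^{-|x|^2/2}$ is strictly positive on the bounded set $\Omega$, the integrand above is non-negative $\gamma$-a.e., so the displayed integral is simultaneously $\le 0$ and $\ge 0$, and therefore vanishes.

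From the vanishing of a non-negative integrand against the positive measure $\gamma$, I conclude that the integrand is zero $\gamma$-a.e. on $\{u_1>u_2\}$; by the strict monotonicity this forces $\nabla u_1=\nabla u_2$ a.e. on that set, i.e. $\nabla\varphi=0$ a.e. in $\Omega$. Finally, since $\Omega$ is bounded we have $W_0^{1,p}(\Omega,\gamma)=W_0^{1,p}(\Omega)$, so the Poincar\'e inequality gives $\|\varphi\|_{L^p(\Omega)}\le C\|\nabla\varphi\|_{L^p(\Omega)}=0$, whence $\varphi\equiv 0$ and thus $u_1\le u_2$ in $\Omega$.

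The main obstacle is the low regularity: the solutions lie only in $W^{1,p}$ and the operator is degenerate where the gradient vanishes, so one cannot argue pointwise and must work entirely inside the weak formulation. The delicate points are (i) verifying that $(u_1-u_2)^+$ is an admissible test function with the stated gradient, which uses the truncation chain rule together with the boundary ordering, and (ii) passing from the integral identity to the pointwise conclusion via strict monotonicity. For $1<p<2$ the lower bound degenerates when both gradients vanish, so the argument must be localized to the set where $|\nabla u_1|+|\nabla u_2|>0$; on its complement one trivially has $\nabla u_1=\nabla u_2=0$, so $\nabla\varphi=0$ there as well and no information is lost.
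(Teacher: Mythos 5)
Your proposal is correct, and its first half coincides with the paper's proof: both insert the test function $\max\{u_1-u_2,0\}$ (admissible in $W_0^{1,p}$ because of the boundary ordering, with $W_0^{1,p}(\Omega,\gamma)=W_0^{1,p}(\Omega)$ on the bounded domain) and arrive at the same key inequality
\[
\int_{\{u_1>u_2\}}\bigl(|\nabla u_1|^{p-2}\nabla u_1-|\nabla u_2|^{p-2}\nabla u_2\bigr)\cdot(\nabla u_1-\nabla u_2)\,d\gamma(x)\le 0.
\]
Where you diverge is the finish: the paper at this point recasts the problem in the weighted divergence structure $a_j(x,u,\nabla u)=e^{-|x|^2/2}|\nabla u|^{p-2}u_j$ and cites Tolksdorf's comparison lemma (\cite[Lemma 1, p.129]{Tolksdorf1984}) to conclude $u_1\le u_2$, whereas you close the argument by hand: strict monotonicity of $\xi\mapsto|\xi|^{p-2}\xi$ forces the non-negative integrand to vanish $\gamma$-a.e., hence $\nabla(u_1-u_2)^+=0$ a.e., and the Poincar\'e inequality in $W_0^{1,p}(\Omega)$ kills the function. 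Your route is self-contained and more elementary --- it makes explicit exactly the monotonicity mechanism that Tolksdorf's lemma packages, and your remark on the degenerate case $1<p<2$ (localizing to $\{|\nabla u_1|+|\nabla u_2|>0\}$, since only the qualitative equality case $a=b$ is needed) correctly handles the one point where the quantitative bounds break down. The paper's citation, by contrast, buys brevity and applies to a wider class of quasilinear structures, but is arguably redundant here since the key inequality has already been derived before the lemma is invoked. As a minor point, you also implicitly correct the exponent typo in the statement ($|\nabla u_i|^{p-1}\nabla u_i$ should read $|\nabla u_i|^{p-2}\nabla u_i$), which is clearly what is intended throughout the paper.
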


\begin{proof}
Since $W^{1,p}(\Omega,\gamma)=W^{1,p}(\Omega)$, we have $u_1,u_2\in W^{1,p}(\Omega)$, and for any non-negative function $\varphi\in W_0^{1,p}(\Omega)$, it holds
\[
\int_{\Omega}  e^{-|x|^2/2} |\nabla u_1|^{p-2}\nabla u_1 \cdot \nabla \varphi dx \leq \int_{\Omega}  e^{-|x|^2/2}  |\nabla u_1|^{p-2}\nabla u_1 \cdot \nabla \varphi dx,
\]
that is,
\[
-\mathrm{div}\ (e^{-|x|^2/2}|\nabla u_1|^{p-1}\nabla u_1)\leq -\mathrm{div}\ (e^{-|x|^2/2}|\nabla u_1|^{p-1}\nabla u_1)\ \ \mathrm{in}\ \Omega,
\]
in the weak sense w.r.t. Sobolev space $W^{1,p}_0(\Omega)$. Let $\psi=\max \{ u_1-u_2,0\}$. Since $u_1\leq u_2$ on $\partial \Omega$, so $\psi$ belongs to $W^{1,p}_0(\Omega)$. Inserting this function $\psi$ into above equation as a test function, we have
\[
\int_{\{ u_1>u_2\}} (|\nabla u_1|^{p-2}\nabla u_1-|\nabla u_2|^{p-2}\nabla u_2) \cdot (\nabla u_1-\nabla u_2)d\gamma(x)\leq 0.
\]
We set
\[
a_j(x,u,\nabla u)=e^{-|x|^2/2}|\nabla u|^{p-2} u_j,
\]
and
\[
a(x,u,\nabla u)=e^{-|x|^2/2}\lambda_{p,\gamma} |u|^{p-2}u .
\]
By \cite[Lemma 1, p.129]{Tolksdorf1984}, we obtain
\[
u_1\leq u_2\ \ \mathrm{in}\ \Omega.
\]
\end{proof}

To prove Hopf's Lemma for the weighted $p$-Laplace operator, we need the following Proposition.
\begin{prop}\label{P1-Cap}
Given $p>1$ and $r_1>r_0>0$. The function $u: \overline{B}_{r_1}(0)\setminus {B}_{r_0}(0)\rightarrow \R$
\begin{equation}
u(x):=u(|x|)=\varphi(r)=\frac{\int_{r_0}^{r}[ e^{{t^2}/{2}}t^{-n+1} ]^{p-1}dt}{\int_{r_0}^{R}[ e^{{t^2}/{2}}t^{-n+1} ]^{p-1}dt}
\end{equation}
is the unique, radially symmetric solution of the problem
\begin{equation}\label{capacity BVP}
\left\{
\begin{array}{lll}
-\mathrm{div} (e^{-|x|^2/2}|\nabla u|^{p-2} \nabla u) =0\ \ \ \mathrm{in}\ \ B_{r_1}(0)\setminus \overline{B}_{r_0}(0),\\
\\
u=0\quad\mathrm{on}\quad \partial B_{r_0}(0),\ \ u=1\quad\mathrm{on}\ \ \partial B_{r_1}(0),\\
\\
0<u<1\quad \mathrm{in}\quad  B_{r_1}(0) \setminus B_{r_0}(0), \quad \mathrm{and}\quad |\nabla u|\geq c> 0\\
 \\
\mathrm{for\ some\ positive\ constant}\ c.
\end{array}
\right.
\end{equation}
\end{prop}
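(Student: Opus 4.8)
The plan is to reduce the boundary value problem \eqref{capacity BVP} to an ordinary differential equation in the radial variable $r=|x|$ and to exhibit the displayed function as its unique solution. First I would take a radial candidate $u(x)=\varphi(r)$, for which $\nabla u=\varphi'(r)\,x/r$ and $|\nabla u|=|\varphi'(r)|$. Using the identity $\mathrm{div}\!\left(g(r)\tfrac{x}{r}\right)=r^{1-n}\frac{d}{dr}\!\left(r^{n-1}g(r)\right)$ valid for any radial field, the equation $\mathrm{div}(e^{-|x|^2/2}|\nabla u|^{p-2}\nabla u)=0$ on the annulus becomes
\[
\frac{d}{dr}\!\left(r^{n-1}e^{-r^2/2}|\varphi'(r)|^{p-2}\varphi'(r)\right)=0,\qquad r\in(r_0,r_1),
\]
which yields the first integral $r^{n-1}e^{-r^2/2}|\varphi'|^{p-2}\varphi'\equiv C$ for some constant $C$.

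From this first integral almost everything follows mechanically. Since the factor $r^{n-1}e^{-r^2/2}$ is positive and $s\mapsto|s|^{p-2}s$ is an odd, strictly increasing homeomorphism of $\R$, the derivative $\varphi'$ has constant sign on $(r_0,r_1)$; the boundary data $\varphi(r_0)=0<1=\varphi(r_1)$ then force $\varphi'>0$ and $C>0$. Solving for $\varphi'$ gives $\varphi'(r)=C^{1/(p-1)}\bigl(e^{r^2/2}r^{-n+1}\bigr)^{1/(p-1)}$, and integrating from $r_0$ with $\varphi(r_0)=0$, $\varphi(r_1)=1$ fixes $C$ and reproduces the closed-form radial profile in the statement. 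The strict bounds $0<u<1$ in the open annulus are then immediate, because a strictly positive integrand makes $\varphi$ strictly increasing between its two boundary values.

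For the gradient bound $|\nabla u|\geq c>0$ I would simply observe that $|\nabla u|=\varphi'(r)=C^{1/(p-1)}\bigl(e^{r^2/2}r^{-n+1}\bigr)^{1/(p-1)}$, and that $t\mapsto\bigl(e^{t^2/2}t^{-n+1}\bigr)^{1/(p-1)}$ is continuous and strictly positive on the compact interval $[r_0,r_1]$, hence attains a positive minimum there; combined with $C>0$ this produces the constant $c$. Uniqueness among radial solutions is again handled by the first integral: any radial solution satisfies $r^{n-1}e^{-r^2/2}|\varphi'|^{p-2}\varphi'\equiv C$, and the two boundary conditions determine both $C$ and the additive constant of integration uniquely, so no other radial solution exists. (Alternatively, uniqueness can be derived from the weak comparison principle of Proposition \ref{weak-comparison-princ}.)

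The one delicate point — and the step I expect to be the main obstacle — is justifying the reduction to the ODE at the level of weak solutions, together with the regularity it presupposes. Because the operator $w\mapsto\mathrm{div}(e^{-|x|^2/2}|\nabla w|^{p-2}\nabla w)$ degenerates where $\nabla w=0$, I would first show that a radial weak solution satisfies the first-integral relation in the distributional sense along $r$, then rule out $C=0$ (which would force $\varphi$ constant, contradicting the boundary data) and thereby conclude that $\varphi'$ stays bounded away from zero. On the set where $\nabla u\neq0$ the equation is uniformly elliptic with smooth coefficients, so standard elliptic regularity upgrades the weak solution to a classical one and legitimizes all of the manipulations above.
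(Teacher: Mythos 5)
Your proposal is correct and follows essentially the same route as the paper: the paper likewise passes to polar coordinates, reduces the equation to the radial ODE, and settles uniqueness by "a comparison principle for divergence form operators" (your first-integral uniqueness argument among radial solutions is a self-contained substitute, and your fallback to Proposition \ref{weak-comparison-princ} is exactly what the paper gestures at). If anything, your version is more complete than the paper's proof, which merely verifies the displayed function by direct computation and never addresses the weak-versus-classical issue you flag; your resolution is the right one, since the first integral $r^{n-1}e^{-r^2/2}|\varphi'|^{p-2}\varphi'\equiv C$ with $C>0$ forces $\varphi'\geq c>0$ on $[r_0,r_1]$, so the operator is uniformly elliptic along the solution and the regularity upgrade is legitimate.

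One point you should make explicit rather than gloss over: integrating $\varphi'(r)=C^{1/(p-1)}\bigl(e^{r^2/2}r^{-n+1}\bigr)^{1/(p-1)}$ does \emph{not} reproduce the formula as printed in the statement, whose integrand carries the exponent $p-1$ (and whose denominator has an undefined upper limit $R$, evidently meant to be $r_1$). Your derivation yields the exponent $1/(p-1)$, and that is the correct one: plugging the printed profile back into the first integral gives $e^{-r^2/2}r^{n-1}(\varphi')^{p-1}\propto\bigl(e^{r^2/2}r^{1-n}\bigr)^{(p-1)^2-1}$, which is constant only when $p=2$, precisely the case where $p-1$ and $1/(p-1)$ coincide. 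So the discrepancy is a typo in the paper rather than an error on your side, but your claim that the integration "reproduces the closed-form radial profile in the statement" is literally false as written; state the corrected formula instead of asserting agreement.
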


\begin{proof}[\bf Proof]
The function $u$ is a radially symmetric, that is, $u(x)=\varphi(r)$ whenever $r=|x|$. Since
\begin{equation}\nonumber
\frac{\partial r}{\partial x_i}=\frac{x_i}{r},\ \ \ \ \frac{\partial u}{\partial x_i}=\frac{\partial \varphi}{\partial r} \frac{\partial r}{\partial x_i}=\varphi'(r)\frac{x_i}{r},
\end{equation}
and
\[
\frac{\partial^2 u}{\partial x_i \partial x_j}=\varphi''(r)\frac{x_i x_j}{r^2}+\varphi'(r)\frac{r^2\delta_{ij}-x_i x_j}{r^3}.
\]
Then, we have
\[
\nabla u=\varphi'(r)\frac{x}{r},\ \ \ (x,\nabla u)=\varphi'(r)r.
\]
Moreover, we observe that for a generic smooth function $u$ and a point $x$ such that $\nabla u(x)\neq 0$, by a direct calculation, we can write
\begin{equation}\nonumber
\Delta_p u(x)=|\nabla u(x)|^{p-2} \Delta u(x)+(p-2)|\nabla u|^{p-4}\sum_{i,j=1}^n  \frac{\partial^2 u}{\partial x_i \partial x_j}\frac{\partial u}{\partial x_i} \frac{\partial u}{\partial x_i}  .
\end{equation}
Passing to polar coordinates, and the above facts, we know that \eqref{capacity BVP} hold. Therefore, by a comparison principles for divergence form operator, for $r_0\leq r\leq r_2$,
\[
u(x)=u(|x|)=\varphi(r)=\frac{\int_{r_0}^{r}[ e^{{t^2}/{2}}t^{-n+1} ]^{p-1}dt}{\int_{r_0}^{R}[ e^{{t^2}/{2}}t^{-n+1} ]^{p-1}dt}
\]
is the unique solution of \eqref{capacity BVP}. This completes the proof.
\end{proof}

\begin{prop}\label{Hopf-Lemma}
Let $\Omega$ be a bounded domain in $\R^n$ $(n\geq 2)$ with boundary of class $C^2$. Let $u\in C^1(\overline{\Omega})$ satisfy
\begin{equation}\label{PDE4}
\left\{
\begin{aligned}
&\mathrm{div} (e^{-|x|^2/2}|\nabla u|^{p-2} \nabla u)\geq 0, \ \ \  \mathrm {in}\ \ \Omega,  \\
& u>0,  \ \ \ \mathrm {in} \ \Omega, \ \mathrm{and}\ \ u=0,\ \ \ \mathrm {on} \ \partial \Omega,
\end{aligned}
\right.
\end{equation}
in the weak sense w.r.t Sobolev space $W_0^{1,p}(\Omega)$. Then $\frac{\partial u}{\partial \nu}<0$ on $\partial \Omega$, where $\nu$ denotes the unit exterior normal vector to $\partial \Omega$.
\end{prop}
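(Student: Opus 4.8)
The plan is to localize near an arbitrary boundary point and run the interior-ball barrier argument, carried out through the weak comparison principle (Proposition \ref{weak-comparison-princ}) rather than any pointwise maximum principle, since $u$ is only $C^1$ and the operator degenerates where $\nabla u=0$. Fix $x_0\in\partial\Omega$. As $\partial\Omega$ is $C^2$, the interior ball condition holds: there are $y\in\Omega$ and $r_1>0$ with $\overline{B_{r_1}(y)}\subset\Omega\cup\{x_0\}$ and $x_0\in\partial B_{r_1}(y)$, so that the exterior unit normal $\nu$ at $x_0$ equals the radial direction $(x_0-y)/r_1$. Fix $r_0\in(0,r_1)$ and work on the thin annulus $A=B_{r_1}(y)\setminus\overline{B_{r_0}(y)}\subset\Omega$. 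Since $\partial B_{r_0}(y)$ is a compact subset of $\Omega$ and $u>0$ there, $m:=\min_{\partial B_{r_0}(y)}u>0$.

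Next I would place a radial lower barrier on $A$. I want a function $v(x)=h(|x-y|)$, decreasing in $r=|x-y|$, that is a weak subsolution of the weighted operator on $A$, i.e.\ $\mathrm{div}(e^{-|x|^2/2}|\nabla v|^{p-2}\nabla v)\ge 0$, with $v=0$ on $\partial B_{r_1}(y)$, $0<v\le m$ on $\partial B_{r_0}(y)$, and $|\nabla v|\ge c>0$ on $\partial B_{r_1}(y)$. Proposition \ref{P1-Cap} furnishes exactly such a profile in the model, origin-centred situation and, crucially, records the gradient lower bound that will force strictness. For a ball centred at a general $y$ the Gaussian weight is no longer radial about $y$, so I would instead take a steep radial profile such as $h(r)=\varepsilon\bigl(e^{-\alpha r^2}-e^{-\alpha r_1^2}\bigr)$: expanding
\[
\mathrm{div}(e^{-|x|^2/2}|\nabla v|^{p-2}\nabla v)=e^{-|x|^2/2}|h'|^{p-2}\Bigl[(p-1)h''+\tfrac{n-1}{r}h'-h'\,\bigl(x\cdot\tfrac{x-y}{r}\bigr)\Bigr],
\]
one sees that on the fixed annulus $r_0\le r\le r_1$ the bracket is dominated by the $(p-1)h''$ term for $\alpha$ large (the weight and the first-order coefficient $x\cdot(x-y)/r$ being bounded there), so $v$ is a subsolution; the scale $\varepsilon$ is then chosen so that $v\le m$ on the inner sphere.

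With this in hand I would compare $v$ and $u$ on $A$. On $\partial A$ one has $v\le u$: on the outer sphere $v=0\le u$, and on the inner sphere $v\le m=\min u\le u$. Moreover $u$ is a weak supersolution of the weighted operator on $A$ --- the left-hand side of \eqref{PDE4} being non-positive, equal for the first eigenfunction to $-\lambda_{p,\gamma}e^{-|x|^2/2}u^{p-1}$ by \eqref{PDE3} --- whereas $v$ is a subsolution; hence the hypotheses of Proposition \ref{weak-comparison-princ} hold with $u_1=v$, $u_2=u$, giving $v\le u$ throughout $A$. Since $v(x_0)=u(x_0)=0$, the nonnegative function $u-v$ vanishes at $x_0$, so its derivative in the inward direction $-\nu$ is nonnegative, i.e.\ $\partial u/\partial\nu(x_0)\le\partial v/\partial\nu(x_0)$. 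Finally the radial computation gives $\partial v/\partial\nu(x_0)=h'(r_1)=-|\nabla v(x_0)|\le -c<0$, whence $\partial u/\partial\nu(x_0)<0$; as $x_0$ was arbitrary the claim follows.

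The main obstacle is the barrier construction, and specifically the fact that the Gaussian weight $e^{-|x|^2/2}$ is not invariant under the translation that centres the interior ball at $y\neq 0$, so the explicit solution of Proposition \ref{P1-Cap} cannot be used verbatim. Resolving this is what forces the thin-annulus estimate above: one exploits that on $\overline\Omega$ the weight lies between positive constants and is smooth, so that the extra first-order term is a bounded lower-order perturbation that a sufficiently convex radial profile overwhelms. The second point requiring care --- and the reason Proposition \ref{P1-Cap} is invoked --- is that strict negativity of the normal derivative is not automatic for a degenerate operator, and is produced precisely by the uniform lower bound $|\nabla v|\ge c>0$ on the barrier at the touching point.
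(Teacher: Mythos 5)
Your proposal is correct, and it follows the same skeleton as the paper's proof (interior ball touching at $x_0$, a radial barrier on an annulus, the weak comparison principle of Proposition \ref{weak-comparison-princ}, and a strict gradient bound on the barrier at the touching point), but your barrier construction is genuinely different --- and in fact repairs a gap in the paper's own argument. The paper invokes Proposition \ref{P1-Cap} translated to the interior ball: its barrier solves $-\mathrm{div}\,(e^{-|x-y|^2/2}|\nabla \upsilon|^{p-2}\nabla \upsilon)=0$, i.e.\ an equation with the weight recentred at $y$, which is \emph{not} the operator in \eqref{PDE4}; it then feeds this pair into Proposition \ref{weak-comparison-princ}, which is stated for the weight $e^{-|x|^2/2}$, so the comparison step is unjustified as written (the Gaussian weight is not translation invariant --- exactly the obstruction you identified). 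Your replacement, the steep profile $h(r)=\varepsilon\bigl(e^{-\alpha r^2}-e^{-\alpha r_1^2}\bigr)$ whose $(p-1)h''$ term (of order $\alpha^2$) dominates the bounded first-order contribution $h'\,x\cdot(x-y)/r$ and the $(n-1)h'/r$ term (of order $\alpha$) on the fixed annulus $r_0\le r\le r_1$, gives a classical subsolution of the \emph{correct} operator, non-degenerate since $|h'|\ge 2\varepsilon\alpha r_0 e^{-\alpha r_1^2}>0$ there; note also that the inequality is homogeneous of degree $p-1$ in $\varepsilon$, so $\varepsilon$ can be chosen after $\alpha$ to arrange $h(r_0)\le m$. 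What the paper's route buys is brevity by citing an explicit solution; what yours buys is an argument that actually closes for the stated weight, at the cost of a short computation. (Incidentally, your barrier also has the correct orientation --- positive on the inner sphere, zero on the outer --- whereas the paper's displayed boundary data for $\upsilon$ and $w=\tau\upsilon$ are mutually inconsistent, evidently a typo.) One point you should make explicit rather than pass over: as printed, \eqref{PDE4} demands $\mathrm{div}\,(e^{-|x|^2/2}|\nabla u|^{p-2}\nabla u)\ge 0$, which makes $u$ a subsolution and, combined with $u=0$ on $\partial\Omega$ and the comparison principle, would force $u\le 0$, so the hypotheses as literally stated are vacuous; you silently read the inequality as $\le 0$ (the supersolution property, which is what eigenfunctions satisfy via \eqref{PDE3} and what both your comparison step and the paper's require). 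That reading is certainly the intended one --- the sign in the statement is a typo --- but a complete write-up should record the correction instead of asserting ``the left-hand side of \eqref{PDE4} being non-positive'' as though it were the printed hypothesis.
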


\begin{proof}[\bf Proof]
Let $x_0\in \partial \Omega$, there exists an open ball $B_{2r}(y)\subset \Omega$ with $x_0\in \partial B_{2r}(y)\cap \partial \Omega$, where $B_{2r}(y)$ denotes an open ball in $\R^n$ centered at $y$ with radius $2r$. By Proposition \ref{P1-Cap}, we can find a smooth solution $\upsilon$ of
\begin{equation}
\left\{
\begin{array}{lll}
-\mathrm{div} (e^{-|x-y|^2/2}|\nabla \upsilon|^{p-2} \nabla \upsilon) =0\ \ \ \mathrm{in}\ \ B_{2r}(y)\setminus \overline{B_{r}(y)},\\
\\
\upsilon=0\quad\mathrm{on}\quad \partial B_{r}(y),\ \ u=1\quad\mathrm{on}\ \ \partial B_{2r}(y),\\
\\
0<\upsilon<1\quad \mathrm{in}\quad  B_{2r}(y) \setminus B_{r}(y), \quad \mathrm{and}\quad |\nabla \upsilon|\geq c> 0,\\
 \\
\mathrm{for\ some\ positive\ constant}\ c.
\end{array}
\right.
\end{equation}
Since $u$ is positive in $\Omega$, we have
\[
\tau=\inf \{ u(x):\ x\in \partial B_r(y)\}>0,
\]
Set $w=\tau \upsilon$, then $w$ satisfies
\begin{equation}
\left\{
\begin{array}{lll}
-\mathrm{div} (e^{-|x-y|^2/2}|\nabla w|^{p-2} \nabla w) =0\ \ \ \mathrm{in}\ \ B_{2r}(y)\setminus \overline{B_{r}(y)},\\
\\
w=\tau\quad\mathrm{on}\quad \partial B_{r}(y),\ \ w=0\quad\mathrm{on}\ \ \partial B_{2r}(y).
\end{array}
\right.
\end{equation}
Since $w\leq u$ on $\partial (B_{2r}(y)\setminus \overline{B_{r}(y)})$, we can apply \eqref{weak-comparison-princ} to $w$ and $u$, with $u_1=w$ and $u_2=u$, to obtain
\[
w\leq u\ \mathrm{in}\ \  B_{2r}(y)\setminus \overline{B_{r}(y)}.
\]
Since $w(x_0)\leq u(x_0)$, we deduce that
\[
\frac{\partial u}{\partial \nu}\leq \frac{\partial w}{\partial \nu}=\tau \frac{\partial \upsilon}{\partial \nu}<0 \ \mathrm{at}\ x_0\in \partial \Omega.
\]
This completes its proof.
\end{proof}

\begin{proof}[\bf Proof of Theorem \ref{weak-solution2}]
By Proposition \ref{Transform}, the existence of weak solution of \eqref{PDE1} in Sobolev space $W^{1,p}_0(\Omega,\gamma)$ is equivalent to existence of the solution of quasi-linear elliptic equation \eqref{PDE3} in divergence form w.r.t Sobolev space $W_0^{1,p}(\Omega)$. By a standard variational method (cf. \cite[Chapter 5]{Lady-Ural1968}), we can deduce that there exists at least one solution $u$ to this variational problem. Since $|u|\in W_0^{1,p}(\Omega,\gamma)$ is also a solution, we may assume that $u\geq 0$.

By the Sobolev embedding theorem, when $p>n$, we know that $W^{1,p}_0(\Omega)\hookrightarrow C^{0,\alpha}(\overline{\Omega})$ for some $0<\alpha<1$. For $p=n$, $W^{1,p}_0(\Omega)\hookrightarrow L^{p}(\overline{\Omega})$ for all $p\in [1,+\infty)$. we only need to prove the case that if $1<p<n$, we have $u\in L^{\infty}(\Omega)$. We define $\upsilon$ by
\[
\upsilon=\inf \{ u,M\},\quad M\geq 0,
\]
and we take $\varphi=\upsilon^{kp+1}$ $(k\geq 0)$ in \eqref{weak-solution}, we obtain
\[
(kp+1)\int_{\Omega} | \upsilon|^{kp} \cdot |\nabla \upsilon|^{p}\ d\gamma (x) = \lambda_{p,\gamma} \int_{\Omega}u^{p-1}\cdot \upsilon^{kp+1} \ d\gamma (x).
\]
The left-hand side is equal to
\[
\frac{kp+1}{(k+1)^p} \int_{\Omega} |\nabla (\upsilon^{k+1})|^p d\gamma(x).
\]
As $W^{1,p}_0(\Omega)\hookrightarrow L^q(\Omega)$ with $q=\frac{np}{n-p}$,
\[
\left( \int_\Omega \upsilon^{(k+1)q}d\gamma(x) \right)^{p/q} \leq \frac{c(k+1)^p}{kp+1}\cdot \lambda_{p,\gamma}\int_{\Omega}|u|^{p-1}\cdot \upsilon^{kp+1} \ d\gamma (x).
\]
Finally, if $u\in L^{r}(\Omega)$, we choose $k$ such that
\[
(k+1)p=r,
\]
and the right hand side is estimated by
\[
\frac{c(k+1)^p}{kp+1}\cdot \lambda_{p,\gamma} \int_\Omega |u|^r d\gamma(x).
\]
We conclude that $u\in L^{r'}(\Omega)$, where $r'=(k+1)p=p^{-1} qr$. But $q>p$, so an easy bootstrop argument gives that $u\in L^{\infty}(\Omega)$ (cf. \cite[Section 5 in Chapter 2]{Lady-Ural1968}).

Next, using \cite[Theorem 1.1, p.251]{Lady-Ural1968}, we get $u\in C^{0,\beta}(\overline{\Omega})$ for some $\beta$ $(0<\beta<1)$. Then, using \cite[Theorem 1]{Tolksdorf1984}, we can get $u\in C^{1,\alpha}_{\mathrm{loc}}(\Omega)$ for some $\alpha$ $(0<\alpha<1)$. Similar to the method in \cite[p.69-71]{Barles1988}. we have that $u\in C^{1,\alpha}(\overline{\Omega})$. Furthermore, positivity of $|u|$ follows from Harnack's inequality due to \cite[Theorem 1.1, p.724]{Trudinger1967}.

In the next part, we prove that the weak solution of \eqref{PDE1} is unique up to a multiplicative constant. Let $u_1$ and $u_2\in W_0^{1,p}(\Omega,\gamma) \cap C^{1+\alpha}(\overline{\Omega})$ be two solutions to \eqref{PDE1}. We define the number $b$ by
\begin{equation}\label{uniq-form1}
b=\sup \left\{ \mu\in \R: u_1-\mu u_2>0\quad \mathrm{in}\ \Omega\right\}.
\end{equation}
Applying Proposition \ref{Hopf-Lemma} to $u_1$ and $u_2$, we get
\begin{equation}\label{uniq-form2}
\frac{\partial u_1}{\partial \nu}<0 \quad \mathrm{and}\quad  \frac{\partial u_2}{\partial \nu}<0\quad  \mathrm{on}\ \partial \Omega,
\end{equation}
where $\nu$ denotes the unit exterior normal vector to $\partial \Omega$. Therefore, we deduce that $b$ is positive. Obviously, $u_1-bu_2\geq 0$ in $\Omega$. Furthermore, we can prove that there exists a point $z\in \Omega$ where $u_1-bu_2$ vanishes. We prove this claim by contradiction. Assuming that $u_1-bu_2>0$ in $\Omega$. Since $bu_2$ is also a positive solution to \eqref{PDE1} and $\lambda_{\gamma,p} u_1^{p-1}\geq \lambda_{\gamma,p} (bu_2)^{p-1}$ in $\Omega$, we see that
\begin{equation}\label{uniq-form3}
-\mathrm{div}(e^{-|x|^2/2}\nabla u_1|^{p-2}\nabla u_1)\geq -\mathrm{div}(e^{-|x|^2/2}|\nabla (bu_2)|^{p-2}\nabla (bu_2)),
\end{equation}
in the weak sense w.r.t Sobolev space $W^{1,p}_0(\Omega)$.

Since $\partial \Omega$ is smooth, there exists a smooth vector field $\nu(x)$ on a neighborhood of $\partial \Omega$ in $\R^n$, which is equal to the unit exterior normal vector to $\partial \Omega$ for all $x\in \partial \Omega$. By Proposition \ref{Hopf-Lemma}, the continuity of derivatives $\frac{\partial u_1}{\partial \nu}$ and $\frac{\partial u_2}{\partial \nu}$, and \eqref{uniq-form2}, we have
\begin{equation}\label{uniq-form4}
\frac{\partial u_1}{\partial \nu}<-\delta \quad \mathrm{and}\quad  \frac{\partial u_2}{\partial \nu}<-\delta\quad  \mathrm{on}\ \overline \Gamma,
\end{equation}
where $\delta$ is a positive constant and $\overline{\Gamma}$ is an open connected neighborhood of $\partial \Omega$ in $\Omega$. Since $\partial \Omega$ is connected, we can choose $\Gamma$ to be connected. It is obvious that $|\nabla u_1|\geq \delta$ and $|\nabla u_2|\geq \delta$ in $\Gamma$. Theorefore, it follows from the regularity theory of the elliptic partial differential equations, see \cite{Book-Gilbarg-Trudinger} that $u_1$ and $u_2$ belong to $C^{\infty}(\Gamma)$. Moreover, by \eqref{uniq-form4}, we have
\begin{equation}\label{uniq-form5}
t\frac{\partial u_1}{\partial \nu}+(1-t)\frac{\partial (bu_2)}{\partial \nu}\leq  -\min \{ \delta,b\delta\} <0,\quad \mathrm{in}\ \Gamma,
\end{equation}
for all number $t$ $(0<t<1)$. By formula \eqref{uniq-form3} and mean value theorem, we have
\begin{align*}
0\leq -&\mathrm{div}(e^{-|x|^2/2}|\nabla u_1|^{p-2}\nabla u_1)- \{ -\mathrm{div}(e^{-|x|^2/2}|\nabla (bu_2)|^{p-2}\nabla (bu_2))\}\\
&=-\mathrm{div}(e^{-|x|^2/2}\nabla u_1|^{p-2}\nabla u_1- e^{-|x|^2/2}|\nabla (bu_2)|^{p-2}\nabla (bu_2))\\
&=-\sum^{n}_{i,j=1}\frac{\partial}{\partial x_i} [a^{i,j}(x)\frac{\partial}{\partial x_j}(u_1-bu_2)]\quad \mathrm{in}\ \Gamma,
\end{align*}
where
\[
a^{i,j}(x)=e^{-|x|^2/2} \int^1_0 \frac{\partial a^i}{\partial q_j}[t\nabla u_1+(1-t)\nabla(bu_2)]dt,
\]
and $a^{i}(q)=|q|^{p-2}q_i$ $(i,j=1,\cdots,n)$ for $q=(q_1.\cdots,q_n)\in \R^n$. Define operator
\[
L:=\sum_{i,j}\frac{\partial}{\partial x_i} \left[a^{i,j}(x)\frac{\partial}{\partial x_j}\right].
\]
By \eqref{uniq-form5}, we know that the operator $L$ is uniformly elliptic operator on $\Gamma$. Consequently, we deduce
\begin{equation}\label{uniq-form6}
\left\{
\begin{array}{lll}
-L(u_1-bu_2) \geq 0\ \ \ \mathrm{in}\ \Gamma,\\
\\
u_1-bu_2> 0\quad\mathrm{in}\ \Gamma,\ \ u_1-bu_2=0\ \mathrm{on}\ \partial \Omega.
\end{array}
\right.
\end{equation}
Thus, by Hopf's boundary point Lemma for uniformly elliptic operators (see \cite[Lemma 3.4, p.34]{Book-Gilbarg-Trudinger}), we get
\[
\frac{\partial}{\partial \nu} (u_1-bu_2)<0\quad \mathrm{on}\ \partial \Omega,
\]
where $\nu$ is the exterior normal vector. Therefore, in view of the continuity of functions $u_1,u_2$ and $u_1-bu_2>0$ in $\Omega$, we have
\[
u_1-(b+\eta)u_2\quad \mathrm{in}\ \Omega,
\]
for a positive number $\eta$. This contradicts the definition of number $b$ in \eqref{uniq-form1}. Therefore, we conclude that there exists a point $z\in \Omega$ where $u_1-bu_2$ vanishes.

In the following, we prove that there exists a point $z^\star\in \Gamma$ where $u_1-u_2$ vanishes, and $\Gamma$ is the open connected neighborhood of $\partial \Omega$ in $\Omega$ in \eqref{uniq-form4}. We choose a bounded subdomain $\Omega^\star$ of $\Omega$ with smooth boundary $\partial \Omega^{\star} $ which satisfies
\[
\overline{\Omega}^{\star} \subset \Omega,\quad \partial \Omega^{\star} \subset \Gamma\quad \mathrm{and}\quad z\in \Omega^\star.
\]
Indeed, if we assume that $u_1-bu_2>0$ on $\partial \Omega^\star$, by the continuous, we have
\[
u_1-bu_2\geq \tau_0>0\quad \mathrm{on}\
 \partial \Omega^\star,
\]
for some $\tau_0>0$. We define the function $w$ by $w=bu_2+\tau_0$, by the properties of $u_2$, we deduce
\[
-\mathrm{div}(e^{-|x|^2}|\nabla w|^{p-2}\nabla w)=\lambda_{p,\gamma} e^{-|x|^2}(bu_2)^{p-1} \quad \mathrm{in}\ \Omega^\star,
\]
in the weak sense w.r.t Sobolev space $W^{1,p}_0(\Omega)$, then we have
\begin{equation}\nonumber
\left\{
\begin{array}{lll}
-\mathrm{div}(e^{-|x|^2/2}\nabla u_1|^{p-2}\nabla u_1)\geq  -\mathrm{div}(e^{-|x|^2/2}|\nabla w|^{p-2}\nabla w)\ \ \ \mathrm{in}\ \Omega^\star,\\
\\
u_1\geq w\quad\mathrm{on}\ \partial \Omega^\star.
\end{array}
\right.
\end{equation}
By Proposition \ref{weak-comparison-princ}, we have
\[
u_1\geq w\quad \mathrm{in}\ \Omega^{*}.
\]
Since $z\in \Omega^{*}$, we have $u_1(z)\geq w(z)=bu_2+\tau_0$. This contradicts $u_1(z)-bu_2(z)=0$. Therefore, we conclude that there exists a point $z^\star\in \partial \Omega^{*}\subset \Gamma$ such that $u_1-bu_2$ vanishes.

Applying the strong maximum principle for uniformly elliptic operators (see \cite[Thorem 3.5, p.35]{Book-Gilbarg-Trudinger}), using $u_1-bu_2$ vanishes at $z^\star$, we have
\begin{equation}\label{uniq-form7}
u_1-bu_2=0\quad \mathrm{in}\ \Gamma.
\end{equation}
We also define the number $b^\star$ by
\[
b^\star=\sup \{ \mu\in \R^n: \ u_2-\mu b_1\ \mathrm{in}\ \Omega\}.
\]
By the same argument as above, we have
\begin{equation}\label{uniq-form8}
u_2-b^\star u_1=0\quad \mathrm{in}\ \Gamma.
\end{equation}
Since $u_1$ is positive on $\Gamma$, combining \eqref{uniq-form7} and \eqref{uniq-form8}, we have $bb^\star=1$. Observing that $u_1-bu_2\geq 0$ and $u_2-(1/b)u_1 \geq 0$ in $\Omega$, we obtain $u_1=bu_2$ in $\Omega$. The proof is completed.
\end{proof}

\section{LOG-CONCAVITY FOR THE FIRST EIGENFUNCTION}\label{Se4}

In this section, we always assume that $\Omega$ is a strongly convex (that is, all the principal curvatures of $\partial \Omega$ are positive). For $\delta>0$, we define
\[
\Omega_\delta= \left\{ x\in \Omega:\ \mathrm{dist}(x,\partial \Omega)> \delta \right\}.
\]

\subsection{some properties of the unique, positive solution to (\ref{PDE1})}

\begin{prop}\label{prop1}
There exists a neighborhood $N$ of $\partial \Omega$ in $\Omega$ satisfying the following conditions:
\begin{equation}\nonumber
|\nabla u|\geq \eta>0 \ \ in \ \ \overline{N}\ \ for\ some\ constant\ \eta>0,
\end{equation}
and
\begin{equation}\nonumber
u\in C^2(\overline{N})\cap C^{1+\alpha}(\overline{\Omega})\ for\ some\ \alpha\ (0<\alpha<1).
\end{equation}
\end{prop}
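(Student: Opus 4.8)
The plan is to read the second displayed assertion off Theorem \ref{weak-solution2} and to obtain the first one by combining Hopf's Lemma with Schauder bootstrapping in the region where the operator is non-degenerate. Indeed, the inclusion $u \in C^{1+\alpha}(\overline{\Omega})$ is precisely the regularity already established in Theorem \ref{weak-solution2}, so the only new content is the existence of a neighborhood $N$ of $\partial\Omega$ on which $|\nabla u|$ is bounded below and $u$ is twice continuously differentiable.

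First I would establish the gradient lower bound on $\partial\Omega$ itself. Since $u \equiv 0$ on $\partial\Omega$, all tangential derivatives of $u$ vanish there, so at a boundary point the gradient reduces to its normal component, $\nabla u = (\partial u/\partial\nu)\,\nu$, whence $|\nabla u| = |\partial u/\partial\nu|$ on $\partial\Omega$. Proposition \ref{Hopf-Lemma} gives $\partial u/\partial\nu < 0$ at every point of $\partial\Omega$; because $\nabla u$ is continuous on $\overline{\Omega}$ (as $u \in C^{1+\alpha}(\overline{\Omega})$) and $\partial\Omega$ is compact, the quantity $-\partial u/\partial\nu$ attains a positive minimum, say $2\eta>0$, on $\partial\Omega$. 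By uniform continuity of $\nabla u$ on $\overline{\Omega}$ I can then pick a neighborhood $N$ of $\partial\Omega$ in $\Omega$ so small that $|\nabla u| \geq \eta > 0$ throughout $\overline{N}$.

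With the gradient bounded away from zero on $\overline{N}$, the weighted $p$-Laplace operator ceases to be degenerate there. Writing \eqref{PDE3} in the form $\sum_{i,j}\partial_{x_i}\bigl(a^{ij}(x)\partial_{x_j}u\bigr)$ with the coefficient matrix built as in the uniqueness argument of Theorem \ref{weak-solution2}, one checks that it is uniformly elliptic on $\overline{N}$ with ellipticity constants depending only on $\eta$, $p$ and $\sup_{\overline{N}}|\nabla u|$, and that its entries are Hölder continuous since $u \in C^{1+\alpha}$. I would then invoke Schauder theory for this linear uniformly elliptic equation: interior estimates give $u \in C^{2+\alpha}$ in the interior of $N$, and, after flattening $\partial\Omega$ by a $C^2$ chart and using the boundary condition $u=0$, boundary Schauder estimates upgrade this to $u \in C^2(\overline{N})$. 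This is exactly the mechanism that produced $u \in C^{\infty}(\Gamma)$ in the proof of Theorem \ref{weak-solution2}, now carried up to the boundary.

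The main obstacle I anticipate is the regularity up to $\partial\Omega$ rather than merely in the interior of $N$: flattening the boundary with only a $C^2$ chart transfers that limited smoothness to the transformed coefficients and data, so one must verify that the boundary Schauder machinery still yields $C^2$ (and not less) up to $\partial\Omega$ under the standing $C^2$ hypothesis on $\partial\Omega$. The interior smoothness and the gradient non-degeneracy, by contrast, are routine once Hopf's Lemma and the $C^{1+\alpha}$ estimate are in hand.
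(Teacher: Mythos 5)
Your proposal follows essentially the same route as the paper: Hopf's Lemma (Proposition \ref{Hopf-Lemma}) plus compactness of $\partial\Omega$ and continuity of $\nabla u$ (from the $C^{1+\alpha}(\overline{\Omega})$ regularity of Theorem \ref{weak-solution2}) yield the uniform gradient bound on $\overline{N}$, after which the equation is uniformly elliptic there and classical Schauder theory from \cite{Book-Gilbarg-Trudinger} gives $u\in C^2(\overline{N})$. You are in fact more careful than the paper, which simply cites ``classical results'' at the final step; the boundary-regularity subtlety you flag (only a $C^2$ chart is available for flattening, while textbook boundary Schauder estimates ask for $C^{2,\alpha}$ boundary) is real and is glossed over in the paper's own proof, so raising it is a point in your favor rather than a deviation.
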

\begin{proof}[\bf Proof]
By Proposition \ref{Hopf-Lemma}, we have
\[
\left|\frac{\partial u}{\partial \nu } \right|>0, \quad \mathrm{on}\quad \partial \Omega,
\]
where $\nu$ denotes the unit exterior normal vector to $\partial \Omega$. Therfore, there exists a neighborhood $N$ of $\partial \Omega$ and a constant $\eta>0$ such that
\begin{equation}\nonumber
|\nabla u|\geq \eta>0 \quad \mathrm{in}\quad \overline{N}.
\end{equation}
Moreover, the equation \eqref{PDE1} becomes strictly elliptic equation in $\overline{N}$, by classical results in \cite{Book-Gilbarg-Trudinger}, we deduce that $u\in C^2(\overline{N})\cap C^{1+\alpha}(\overline{\Omega})$, for some constant $0<\alpha<1$. This completes the proof.
\end{proof}

\begin{prop}\label{prop2}
Let $\Omega$ be strongly convex. For $\delta>0$, we define
\[
\Omega_\delta= \left\{ x\in \Omega:\ \mathrm{dist}(x,\partial \Omega)> \delta \right\}.
\]
There exists a number $\delta_0>0$, the function $\upsilon=\ln u$ satisfies the following:
\begin{center}
The matrix $[-D_{ij} \upsilon]$ is positive on $\Omega \setminus \Omega_{\delta_0}$ where $D_{ij}=\frac{\partial^2}{\partial x_i \partial x_j}$,
\end{center}
and
\begin{flushleft}
For any\ $\delta\ (0<\delta<\delta_0),$ every tangent plane to\ the\ graph\ of\ $\upsilon$ on $\partial \Omega_\delta$ lies above the graph on $\overline{\Omega}_\delta$ and contacts it only at tangent point in $\overline{\Omega}_\delta$.
\end{flushleft}
\end{prop}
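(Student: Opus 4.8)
The plan is to establish the two claimed properties near the boundary by exploiting the boundary regularity from Proposition~\ref{prop1} together with the strong convexity of $\Omega$, computing the Hessian of $\upsilon=\ln u$ in terms of the geometry of $\partial\Omega$ and the nonvanishing of $\nabla u$ in a boundary collar. Since $u\in C^2(\overline N)$ and $|\nabla u|\ge\eta>0$ in $\overline N$ by Proposition~\ref{prop1}, the function $\upsilon=\ln u$ is $C^2$ in the collar $\overline N$ and its Hessian is
\[
D_{ij}\upsilon=\frac{D_{ij}u}{u}-\frac{D_iu\,D_ju}{u^2}.
\]
As $x\to\partial\Omega$, we have $u\to0$ while $|\nabla u|$ stays bounded below, so the rank-one term $-u^{-2}D_iu\,D_ju$ blows up like $u^{-2}$ in the normal direction and dominates. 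The first step is therefore to show that in a sufficiently thin collar the matrix $[-D_{ij}\upsilon]$ is positive definite: the dominant term forces positivity in the normal direction, and I would control the tangential directions using the strong convexity of $\partial\Omega$, which makes the second fundamental form of the level sets of $u$ positive near the boundary. This yields the first assertion, that $[-D_{ij}\upsilon]$ is positive on $\Omega\setminus\Omega_{\delta_0}$ for some $\delta_0>0$.

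For the second assertion I would argue that on the thin shell $\Omega\setminus\Omega_{\delta_0}$ the function $\upsilon$ is strictly concave (its Hessian $D_{ij}\upsilon$ is negative definite, i.e.\ $[-D_{ij}\upsilon]$ is positive definite from the first part). A strictly concave $C^2$ function on a convex set has the property that each tangent plane lies strictly above the graph, touching only at the point of tangency. The remaining issue is geometric rather than analytic: for $\delta$ small, $\partial\Omega_\delta$ must lie inside the shell $\Omega\setminus\Omega_{\delta_0}$ where strict concavity holds, and $\overline{\Omega}_\delta$ must be a convex set on which this tangent-plane-above property can be invoked. I would note that for $\Omega$ strongly convex and $\delta$ small, the inner parallel body $\Omega_\delta$ is again convex with $C^2$ boundary, and that $\overline\Omega_\delta$ is contained in the region of strict concavity whenever $0<\delta<\delta_0$; the tangent point of a tangent plane constructed at a boundary point of $\Omega_\delta$ then lies on $\partial\Omega_\delta\subset\Omega\setminus\Omega_{\delta_0}$, so strict concavity gives the ``contacts only at the tangent point'' conclusion.

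The main obstacle I anticipate is the tangential direction in the Hessian computation: normal positivity is automatic from the blow-up of the rank-one term, but showing $[-D_{ij}\upsilon]$ is positive definite in \emph{all} directions requires relating the tangential second derivatives of $u$ to the principal curvatures of the level sets of $u$. The clean way to do this is to diagonalize at a boundary point using the orthonormal frame adapted to $\partial\Omega$ (normal plus principal tangent directions), express $D^2u$ via the boundary data $u=0$, $\partial u/\partial\nu<0$ and the second fundamental form, and verify that on level sets the relevant curvature is controlled below by the positive principal curvatures of $\partial\Omega$. Strong convexity is exactly what guarantees these are bounded away from zero, so the tangential block of $[-D_{ij}\upsilon]$ stays positive after the rank-one term is accounted for. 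I would make this quantitative by a continuity/compactness argument: all the geometric quantities are continuous up to $\partial\Omega$ and strictly signed there, hence stay strictly signed on a uniform collar, which simultaneously produces the threshold $\delta_0$.
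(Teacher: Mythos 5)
Your first part is sound in outline and in fact reconstructs exactly the computation behind the result the paper invokes: the paper's own proof is essentially a citation — it verifies the hypotheses (Hopf's lemma, Proposition \ref{Hopf-Lemma}, gives $\partial u/\partial\nu<0$; Proposition \ref{prop1} gives $u\in C^2(\overline N)$ and $|\nabla u|\ge\eta$ in a collar; $\delta_0$ is chosen so that $\Omega\setminus\Omega_{\delta_0}\subseteq N$) and then appeals to Korevaar's boundary lemma \cite[Lemma 2.4]{Korevaar1}. Your Hessian decomposition, the rank-one blow-up $u^{-2}D_iu\,D_ju$, and the identification of the tangential Hessian with $-u_\nu$ times the second fundamental form of $\partial\Omega$ (positive by strong convexity) is precisely that lemma's mechanism; the only thing you gloss over there is the mixed normal–tangential entries, which are $O(1/u)$ and must be absorbed via $\sqrt{(1/u)\cdot(1/u^2)}=u^{-3/2}\gg u^{-1}$ — routine, but worth stating.

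The second part, however, contains a genuine error. You assert that ``$\overline\Omega_\delta$ is contained in the region of strict concavity whenever $0<\delta<\delta_0$,'' but the inclusion runs the other way: $\delta<\delta_0$ gives $\Omega_{\delta_0}\subset\Omega_\delta$, so $\overline\Omega_\delta$ \emph{contains} the inner region $\Omega_{\delta_0}$, where no concavity information is available — and cannot be assumed, since log-concavity of $u$ in the interior is exactly what Propositions \ref{Korevaar's-comcav-maxi-theo}--\ref{prop4} are later going to prove using Proposition \ref{prop2} as the boundary ingredient; your argument is circular at this point. Strict concavity in the thin shell $\Omega\setminus\Omega_{\delta_0}$ only controls the tangent plane near the tangent point. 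To get domination of the graph on \emph{all} of $\overline\Omega_\delta$, Korevaar's Lemma 2.4 uses a second, quantitative mechanism that your proposal is missing: at $x_0\in\partial\Omega_\delta$ one has $|\nabla\upsilon(x_0)|\ge\eta/u(x_0)\gtrsim 1/\delta$ with $\nabla\upsilon(x_0)$ essentially along the inner normal (tangential components stay bounded), while $\upsilon=\ln u$ is bounded above; uniform convexity of $\Omega$ (hence of $\Omega_\delta$) gives $(x-x_0)\cdot n_{\mathrm{in}}\gtrsim|x-x_0|^2$ for $x\in\overline\Omega_\delta$, so the steeply rising tangent plane clears the bounded graph at all points away from $x_0$, including the whole inner region; the collar concavity then handles the remaining small neighborhood of $x_0$. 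Without this two-regime argument the second assertion of the proposition does not follow.
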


\begin{proof}[\bf Proof]
By Proposition \ref{Transform} and Proposition \ref{Hopf-Lemma}, we have
\[
\frac{\partial{u}}{\partial \nu}<0,
\]
where $\nu$ is the exterior normal to $\partial \Omega$. We choose $\delta_0>0$ sufficient small to get $\Omega \setminus \Omega_{\delta_0} \subseteq N$, where $N$ is obtained in Proposition \ref{prop1}. By \cite[Lemma 2.4, p.610-611]{Korevaar1}, this proof is completed.
\end{proof}

\subsection{A regularized problem}
\

Find a solution $u_\varepsilon\in W_0^{1,p}(\Omega,\gamma)$ satisfying
\begin{equation}\label{varia-prob2}
\int_{\Omega} \left( \varepsilon u^2+|\nabla u|^2 \right)^{\frac{p}{2}} d\gamma(x) = \mathop{\min}_{u\in W^{1,p}_0(\Omega,\gamma)} \int_{\Omega} \left( \varepsilon u^2+|\nabla u|^2 \right)^{\frac{p}{2}} d\gamma(x)=\lambda^{\varepsilon}_{p,\gamma}\int_{\Omega} |u|^p d\gamma(x).
\end{equation}
for sufficiently small number $\varepsilon >0$. Without loss of generality, we always assume that $\| u\|_{L^p(\Omega,\gamma)}=1$, for above variational problem.

We set
\[
a_j(x,u,\nabla u)=e^{-|x|^2/2} (\varepsilon u^2+|\nabla u|^2)^{\frac{p-2}{2}}u_j,
\]
and
\[
a(x,u,\nabla u)=\lambda^{\varepsilon}_{p,\gamma} |u|^{p-2}{u}e^{-|x|^2/2}-\varepsilon (\varepsilon u^2+|\nabla u|^2)^{\frac{p-2}{2}} ue^{-|x|^2/2}.
\]
Similar to Proposition \ref{Transform}, we conclude that $u_\varepsilon\in W_0^{1,p}(\Omega)$ is also a weak solution to the equation
\begin{equation}\label{PDE5}
\sum D_j a_j(x,u,\nabla u)+a(x,u,\nabla u)=0, \ \ \mathrm{in}\ \ \Omega.
\end{equation}
And vice versa, every weak solution of \eqref{PDE5} is solution of \eqref{varia-prob2}

\begin{prop}\label{weak-solution1}
There exists at least one non-negative solution $u_{\varepsilon} \in  W^{1,p}_0(\Omega,\gamma) $ to variational problem, which satisfies
\begin{align}\label{regul-prob}
  -\mathrm{div} &\left[ (\varepsilon u^2_{\varepsilon}+|\nabla u_\varepsilon|^2)^{\frac{p-2}{2}}\nabla u_\varepsilon\right] +(x,\nabla u_\varepsilon)(\varepsilon u^2_{\varepsilon}+|\nabla u_\varepsilon|^2)^{\frac{p-2}{2}}  \\
   & =\lambda^{\varepsilon}_{p,\gamma} |u_\varepsilon|^{p-2}{u_\varepsilon}-\varepsilon (\varepsilon u^2_{\varepsilon}+|\nabla u_\varepsilon|^2)^{\frac{p-2}{2}} u_\varepsilon,\nonumber
\end{align}
in $\Omega$, and
\begin{equation}\label{Log-form4}
u_\varepsilon \rightarrow u \ \ \mathrm{in}\ \ W^{1,p}_0(\Omega,\gamma)\ \ \mathrm{as}\ \ \varepsilon \rightarrow 0,
\end{equation}
where $u$ is the unique positive solution to (\ref{PDE1}).
\end{prop}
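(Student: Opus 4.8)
The plan is to treat the two assertions of the Proposition separately: solvability of the penalized minimization \eqref{varia-prob2} together with its Euler--Lagrange equation \eqref{regul-prob}, and then the convergence \eqref{Log-form4}. For the \emph{existence}, I would argue by the direct method. Set $J_\varepsilon(v)=\int_\Omega(\varepsilon v^2+|\nabla v|^2)^{p/2}\,d\gamma(x)$ and minimize over the constraint set $\{v\in W^{1,p}_0(\Omega,\gamma):\|v\|_{L^p(\Omega,\gamma)}=1\}$. The integrand $(s,\xi)\mapsto(\varepsilon s^2+|\xi|^2)^{p/2}$ is the $p$-th power of the Euclidean norm of $(\sqrt{\varepsilon}\,s,\xi)$, hence convex for $p>1$, so $J_\varepsilon$ is convex and sequentially weakly lower semicontinuous on $W^{1,p}_0(\Omega,\gamma)$. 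Since $J_\varepsilon(v)\ge\int_\Omega|\nabla v|^p\,d\gamma$, any minimizing sequence is bounded in $W^{1,p}_0(\Omega,\gamma)=W^{1,p}_0(\Omega)$; by the compact embedding $W^{1,p}_0(\Omega)\hookrightarrow L^p(\Omega)$ recalled in Section \ref{Se2}, the constraint $\|v\|_{L^p}=1$ is preserved in the limit, producing a minimizer $u_\varepsilon$. Replacing $u_\varepsilon$ by $|u_\varepsilon|$ leaves $J_\varepsilon$ unchanged, since $|\nabla |u_\varepsilon||=|\nabla u_\varepsilon|$ a.e., so we may take $u_\varepsilon\ge 0$. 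Computing the first variation under the $L^p$ constraint yields \eqref{regul-prob} with the multiplier $\lambda^\varepsilon_{p,\gamma}$, after the same weight bookkeeping as in Proposition \ref{Transform}.

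For the \emph{convergence of the multipliers}, the next step is to show $\lambda^\varepsilon_{p,\gamma}\to\lambda_{p,\gamma}$ by a two-sided comparison. Normalizing $u$ so that $\|u\|_{L^p(\Omega,\gamma)}=1$ and using it as a competitor in \eqref{varia-prob2}, together with an elementary bound of the form $(\varepsilon u^2+|\nabla u|^2)^{p/2}\le C_p(\varepsilon^{p/2}|u|^p+|\nabla u|^p)$, gives $\limsup_{\varepsilon\to0}\lambda^\varepsilon_{p,\gamma}\le\int_\Omega|\nabla u|^p\,d\gamma=\lambda_{p,\gamma}$. Conversely, from $|\nabla u_\varepsilon|^p\le(\varepsilon u_\varepsilon^2+|\nabla u_\varepsilon|^2)^{p/2}$ and the variational characterization \eqref{varia-prob} one gets $\lambda_{p,\gamma}\le\int_\Omega|\nabla u_\varepsilon|^p\,d\gamma\le\lambda^\varepsilon_{p,\gamma}$, whence $\liminf_{\varepsilon\to0}\lambda^\varepsilon_{p,\gamma}\ge\lambda_{p,\gamma}$; for $1<p<2$ the same conclusion follows after an obvious interpolation of the upper bound. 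In particular $\int_\Omega|\nabla u_\varepsilon|^p\,d\gamma$ is uniformly bounded, so $\{u_\varepsilon\}$ is bounded in $W^{1,p}_0(\Omega,\gamma)$; passing to a subsequence, $u_\varepsilon\rightharpoonup u^\star$ weakly in $W^{1,p}_0(\Omega,\gamma)$ and $u_\varepsilon\to u^\star$ strongly in $L^p(\Omega,\gamma)$, with $u^\star\ge0$ and $\|u^\star\|_{L^p(\Omega,\gamma)}=1$.

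For \emph{identification and strong convergence}, I would pass to the limit in the weak form of \eqref{regul-prob}. The penalization term carries an explicit factor $\varepsilon$ and is bounded via the uniform gradient estimate, hence vanishes in the dual; the right-hand side converges to $\lambda_{p,\gamma}|u^\star|^{p-2}u^\star$ by the strong $L^p$ convergence. To pass to the limit in the nonlinear, $\varepsilon$-dependent flux $(\varepsilon u_\varepsilon^2+|\nabla u_\varepsilon|^2)^{(p-2)/2}\nabla u_\varepsilon$ I would use the Browder--Minty monotonicity argument, noting that for fixed smooth test fields the regularized operator converges to $|\nabla\cdot|^{p-2}\nabla\cdot$; this shows $u^\star$ is a weak solution of \eqref{PDE1} with eigenvalue $\lambda_{p,\gamma}$. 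By the uniqueness in Theorem \ref{weak-solution2} and the normalization, $u^\star=u$, and since every subsequence admits a further subsequence with the same limit, the full family converges. To upgrade weak to strong convergence in $W^{1,p}_0(\Omega,\gamma)$, I would test \eqref{regul-prob} and \eqref{PDE1} with $u_\varepsilon-u$, subtract, and invoke the vector monotonicity inequality $(|a|^{p-2}a-|b|^{p-2}b)\cdot(a-b)\ge c_p|a-b|^p$ (for $p\ge2$; the analogous strict-monotonicity estimate for $1<p<2$), controlling the $\varepsilon$-flux and the Gaussian drift cross terms by the uniform bounds and $\lambda^\varepsilon_{p,\gamma}\to\lambda_{p,\gamma}$, to conclude $\int_\Omega|\nabla u_\varepsilon-\nabla u|^p\,d\gamma\to0$, which is \eqref{Log-form4}.

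The \emph{main obstacle} is the passage to the limit in the nonlinear flux combined with the strong $W^{1,p}$ convergence: both the $\varepsilon$ inside the operator and the drift term $(x,\nabla u)$ must be shown not to obstruct the monotonicity estimates, and the identification of the weak limit relies essentially on the uniqueness established in Theorem \ref{weak-solution2}.
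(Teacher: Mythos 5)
Your proposal is correct in substance, but the second half takes a genuinely different route from the paper. The existence step (direct method, convexity of $(s,\xi)\mapsto(\varepsilon s^2+|\xi|^2)^{p/2}$, replacing $u_\varepsilon$ by $|u_\varepsilon|$) and the energy sandwich $\int_\Omega|\nabla u|^p\,d\gamma\le\int_\Omega|\nabla u_\varepsilon|^p\,d\gamma\le\lambda^\varepsilon_{p,\gamma}\le\int_\Omega(\varepsilon u^2+|\nabla u|^2)^{p/2}\,d\gamma$ match the paper exactly. Where you diverge: to identify the weak limit you pass to the limit in the Euler--Lagrange equation \eqref{regul-prob} via Browder--Minty, and you upgrade to strong $W^{1,p}$ convergence by testing \eqref{regul-prob} and \eqref{PDE1} with $u_\varepsilon-u$ and invoking strong monotonicity of the $p$-Laplacian flux. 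The paper never touches the equation in the limit passage: it stays purely variational, using weak lower semicontinuity plus the energy sandwich (closed by dominated convergence, since $(\varepsilon u^2+|\nabla u|^2)^{p/2}\le(u^2+|\nabla u|^2)^{p/2}\in L^1$ for $\varepsilon\le1$) to conclude that the weak limit $\tilde u$ is a normalized minimizer of \eqref{varia-prob}, hence equals $u$ by uniqueness; strong convergence then comes for free from weak convergence together with convergence of the gradient norms, via the Riesz--Nagy mean convergence theorem (the Radon--Riesz property of the uniformly convex $L^p$). The paper's route is lighter: your monotonicity argument must additionally compare the perturbed flux $(\varepsilon u_\varepsilon^2+|\nabla u_\varepsilon|^2)^{(p-2)/2}\nabla u_\varepsilon$ with $|\nabla u|^{p-2}\nabla u$, control the drift and penalization cross terms, and use the modified monotonicity inequality for $1<p<2$, all of which is feasible with your uniform bounds but considerably more bookkeeping. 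One minor slip worth flagging: for $p>2$ your bound $(\varepsilon u^2+|\nabla u|^2)^{p/2}\le C_p(\varepsilon^{p/2}|u|^p+|\nabla u|^p)$ has $C_p>1$ and therefore does not by itself yield $\limsup_{\varepsilon\to0}\lambda^\varepsilon_{p,\gamma}\le\int_\Omega|\nabla u|^p\,d\gamma$; replace it by dominated convergence as above and the sandwich closes for all $p>1$.
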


\begin{proof}[\bf Proof]
By a classical variational method for quasi-linear elliptic equation (cf. \cite[Chapter 5]{Lady-Ural1968}), there exists at least one solution $u_\varepsilon\in W_0^{1,p}(\Omega)$ for quasi-linear elliptic equation \eqref{PDE5}, which is also a solution for variational problem \eqref{varia-prob2} w.r.t Sobolev space $W_0^{1,p}(\Omega,\gamma)$. Since $|u_\varepsilon|\in W_0^{1,p}(\Omega,\gamma)$ is also a solution to $\eqref{varia-prob2}$, we may assume that $u_\varepsilon$ is non-negative in $\Omega$.

Since $\|u_\varepsilon\|_{L^p(\Omega,\gamma)}=1$, from the definition of the variational problem \eqref{varia-prob2}, we have that $\lambda_{p,\gamma}^\varepsilon$ is monotonically increasing w.r.t $\varepsilon>0$. Then, there exists a constant $\varepsilon_0$, for $0<\varepsilon<\varepsilon_0$, we know that the set $\{ u_\varepsilon\}$ is bounded in $W^{1,p}_0(\Omega,\gamma)$. Therefore, there exists a subsequence $\{u_{\varepsilon'}\}$ and $\tilde{u}$ satisfying
\[
u_{\varepsilon'}\rightharpoonup \tilde{u}\quad \mathrm{weakly\ in}\ W^{1,p}_{0}(\Omega,\gamma)\ \mathrm{as}\ \varepsilon'\rightarrow 0.
\]
By the lower semi-continuity of the norm, we have
\begin{equation}\label{Log-form1}
\int_{\Omega} |\nabla \tilde{u}|^p d\gamma(x)\leq \mathop{\lim \inf}_{\varepsilon'\rightarrow 0} |\nabla u_{\varepsilon'}|^pd\gamma(x).
\end{equation}
Since the embedding $W^{1,p}_0(\Omega,\gamma)\hookrightarrow L^p(\Omega,\gamma)$ is compact, we have $u_{\varepsilon'}\rightarrow \tilde{u}$ in $L^{p}(\Omega,\gamma)$ and almost everywhere in $\Omega$ as $\varepsilon'\rightarrow 0$, by taking a subsequence. Then, we have that $\| \tilde{u}\|_{L^p(\Omega,\gamma)}=1$ and $\tilde{u}$ is non-negative in $\Omega$.

On the other hand, we assume that $u$ is the solution of \eqref{PDE1} with $\| u\|_{L^p(\Omega,\gamma)}=1$, by using the minimizing properties of $u$ and $u_\varepsilon$, we deduce that
\begin{equation}\nonumber
\int_\Omega |\nabla u|^p d\gamma(x)\leq \int_\Omega |\nabla u_\varepsilon|^p d\gamma(x)\leq \int_\Omega (\varepsilon u^2_\varepsilon+|\nabla u_\varepsilon|^2)^{\frac{p}{2}}d\gamma(x)\leq \int_{\Omega} (\varepsilon u^2+|\nabla u|^2)^{\frac{p}{2}}d\gamma(x).
\end{equation}
By Lebesgue's dominated convergence theorem, we have
\begin{equation}\label{Log-form2}
\int_\Omega |\nabla u_\varepsilon|^pd\gamma(x)\rightarrow \int_\Omega |\nabla u|^pd\gamma(x)\quad \mathrm{as}\ \varepsilon \rightarrow 0.
\end{equation}
Then, by \eqref{Log-form1} and \eqref{Log-form2}, we deduce that
\[
\int_\Omega |\nabla \tilde{u}|^p d\gamma(x) \leq \int_\Omega |\nabla u|^p d\gamma(x).
\]
Therefore, by the uniqueness of the non-negative solution to \eqref{PDE1}, we deduce that $\tilde{u}=u$. Moreover, we have
\begin{equation}\label{Log-form3}
u_{\varepsilon}\rightharpoonup \tilde{u}\quad \mathrm{weakly\ in}\ W^{1,p}_{0}(\Omega,\gamma)\ \mathrm{as}\ \varepsilon\rightarrow 0.
\end{equation}
Finally, by \eqref{Log-form2} and \eqref{Log-form3}, according to the mean convergence theorem of Riesz and Nagy \cite{Ries-Nagy1955}, We obtain \eqref{Log-form4}. This completes the proof.
\end{proof}

\begin{prop}\label{holder-Regularity1}
The solution belongs to $C^{\beta}(\overline{\Omega})$ for some $\beta$ ($0<\beta<1$) and satisfies
\begin{equation}\label{Log-form5}
\|u_{\varepsilon} \|_{C^{0,\beta}(\overline{\Omega})}\leq M,
\end{equation}
where $M$ and $\beta$ are constants independent of $\varepsilon$.
\end{prop}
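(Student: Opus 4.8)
The plan is to deduce the estimate from the De Giorgi--Nash--Moser Hölder theory for quasilinear equations in divergence form, in exactly the form \cite[Theorem 1.1, p.251]{Lady-Ural1968} already used for \eqref{PDE1}; the entire content of the proposition is that \emph{every} structure constant entering that estimate may be chosen independently of $\varepsilon$. Writing the regularized equation \eqref{regul-prob} in the divergence form \eqref{PDE5} with
\[
\mathbf A(x,u,q)=e^{-|x|^2/2}(\varepsilon u^2+|q|^2)^{\frac{p-2}{2}}q,\qquad B(x,u,q)=\lambda^{\varepsilon}_{p,\gamma}|u|^{p-2}u\,e^{-|x|^2/2}-\varepsilon(\varepsilon u^2+|q|^2)^{\frac{p-2}{2}}u\,e^{-|x|^2/2},
\]
I would use throughout that on the bounded set $\overline\Omega$ the weight satisfies $0<c_1\le e^{-|x|^2/2}\le c_2$, so it never interferes with uniformity, and that $\varepsilon\le\varepsilon_0\le1$ keeps every power of $\varepsilon$ bounded.

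First I would establish a uniform bound $\|u_\varepsilon\|_{L^\infty(\Omega)}\le M_0$ with $M_0$ independent of $\varepsilon$, by the same Moser-type bootstrap used in the proof of Theorem \ref{weak-solution2}: testing the weak form of \eqref{PDE5} against $\varphi=\upsilon^{kp+1}$ with $\upsilon=\min\{u_\varepsilon,M\}$ and iterating through the embedding $W^{1,p}_0(\Omega)\hookrightarrow L^{q}(\Omega)$. The coercivity driving the iteration comes from the elementary identity $(\varepsilon u^2+|q|^2)^{\frac{p-2}{2}}|q|^2=(\varepsilon u^2+|q|^2)^{\frac p2}-\varepsilon u^2(\varepsilon u^2+|q|^2)^{\frac{p-2}{2}}$, which gives $(\varepsilon u^2+|q|^2)^{\frac{p-2}{2}}|q|^2\ge|q|^p$ when $p\ge2$ and $\ge|q|^p-\varepsilon^{p/2}|u|^p$ when $1<p<2$; the subtracted term is lower order and absorbed. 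The starting point is the normalization $\|u_\varepsilon\|_{L^p(\Omega,\gamma)}=1$ together with the fact, shown in the proof of Proposition \ref{weak-solution1}, that $\lambda^{\varepsilon}_{p,\gamma}$ is monotone in $\varepsilon$ and hence bounded by $\lambda^{\varepsilon_0}_{p,\gamma}$ on $(0,\varepsilon_0]$. Every constant produced then depends only on $p,n,\Omega,c_1,c_2$ and $\lambda^{\varepsilon_0}_{p,\gamma}$.

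With $|u_\varepsilon|\le M_0$ in hand, I would verify the structure conditions with $\varepsilon$-independent constants: (a) ellipticity from below, $\mathbf A\cdot q\ge c_1|q|^p-C$, via the inequality above; (b) growth from above, $|\mathbf A|\le C(|q|^{p-1}+1)$, using $(\varepsilon u^2+|q|^2)^{\frac{p-2}{2}}|q|\le|q|^{p-1}$ for $1<p<2$ and $(\varepsilon u^2+|q|^2)^{\frac{p-1}{2}}\le C(\varepsilon^{\frac{p-1}{2}}M_0^{p-1}+|q|^{p-1})$ for $p\ge2$; and (c) the right-hand side bound $|B|\le C(1+|q|^{p-2})\le C(1+|q|^{p-1})$, since the regularization term obeys $\varepsilon(\varepsilon u^2+|q|^2)^{\frac{p-2}{2}}|u|\le C\varepsilon(1+|q|^{p-2})$, which is of subnatural growth and requires no smallness condition. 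These are precisely the hypotheses of \cite[Theorem 1.1, p.251]{Lady-Ural1968}, yielding an interior Hölder estimate with exponent $\beta$ and constant $M$ depending only on the structure constants. Since $\partial\Omega$ is of class $C^2$ it satisfies a uniform exterior sphere condition and $u_\varepsilon=0$ on $\partial\Omega$, so the boundary part of the same theorem upgrades this to $u_\varepsilon\in C^{0,\beta}(\overline\Omega)$ with the uniform bound \eqref{Log-form5}.

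The main obstacle is the uniform $L^\infty$ bound of the first step together with the bookkeeping in (c): one must be sure that the regularization terms $\varepsilon(\varepsilon u^2+|q|^2)^{\frac{p-2}{2}}u$, whose coefficient for $p>2$ a priori grows like $|q|^{p-2}$, are genuinely controlled by the $L^\infty$ bound and the small factor $\varepsilon$ rather than degrading either the exponent $\beta$ or the constant $M$. Once $M_0$ is secured independently of $\varepsilon$, this reduces to the elementary inequalities above; but it is precisely the $\varepsilon$-independence obtained here on which the approximation scheme of Section \ref{Se4} ultimately rests, since it is what permits extraction of a subsequence converging in $C^{0,\beta'}(\overline\Omega)$.
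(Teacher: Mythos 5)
Your proposal is correct and follows essentially the same route as the paper: both proofs hinge on the H\"older estimate of \cite[Theorem 1.1, p.251]{Lady-Ural1968} applied to the divergence-form equation \eqref{PDE5}, after first securing an $\varepsilon$-uniform $L^\infty$ bound. The one real difference is the source of that bound: the paper obtains $\sup_\Omega |u_\varepsilon|\le c_1$ by citing \cite[Lemma 10.8]{Book-Gilbarg-Trudinger} applied to \eqref{regul-prob}, whereas you re-run the Moser-type iteration already used for the unregularized problem in the proof of Theorem \ref{weak-solution2}. Your version is more self-contained, and it has the additional merit of making explicit the $\varepsilon$-uniform structure conditions (your (a)--(c), together with the bound on $\lambda^{\varepsilon}_{p,\gamma}$ via its monotonicity from Proposition \ref{weak-solution1}) that the paper merely asserts when it claims $\beta$ and $c_2$ are independent of $\varepsilon$; the cost is that you must track the regularization terms through the iteration, which your coercivity identity handles (note also that for nonnegative $u_\varepsilon$ and nonnegative test functions the term $-\varepsilon(\varepsilon u^2+|\nabla u|^2)^{(p-2)/2}u$ has a favorable sign, so it can simply be dropped). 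One small slip in your step (c): for $1<p<2$ the chain $|B|\le C(1+|q|^{p-2})\le C(1+|q|^{p-1})$ fails as $q\to 0$, since $|q|^{p-2}$ blows up there while $1+|q|^{p-1}$ stays bounded. The repair is immediate: in that range $(\varepsilon u^2+|q|^2)^{(p-2)/2}\le (\varepsilon u^2)^{(p-2)/2}$ for $u\neq 0$, hence $\varepsilon(\varepsilon u^2+|q|^2)^{(p-2)/2}|u|\le \varepsilon^{p/2}|u|^{p-1}\le \varepsilon^{p/2}M_0^{p-1}$, so the regularization term in $B$ is in fact uniformly bounded and (c) holds with a constant independent of $\varepsilon$; this does not affect the conclusion.
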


\begin{proof}[\bf Proof.]
Since $\|u_\varepsilon \|_{W_0^{1,p}(\Omega)}<+\infty$ and $u_\varepsilon\geq 0$, applying in \cite[Lemma 10.8, p.271-271, p.277]{Book-Gilbarg-Trudinger} to \eqref{regul-prob}, we obtain the following estimate
\begin{equation}\label{Log-form6}
 \sup \{ |u_\varepsilon(x)|:\ x\in \Omega \}\leq c_1.
\end{equation}
By \cite[Theorem 1.1, p.251]{Lady-Ural1968}, we deduce that  $u\in C^{\beta}(\overline{\Omega})$ for some $(0<\beta<1)$, that is, it satisfies
\begin{equation}\label{Log-form7}
\mathop{\sup}_{x,y\in \overline{\Omega},x\neq y}\frac{|u_\varepsilon(x)-u_\varepsilon(y)|}{|x-y|^{\beta}}\leq c_2,
\end{equation}
where $\beta$ and $c_2$ are constants independent of $\varepsilon$. Combining \eqref{Log-form6} and \eqref{Log-form7}, we get \eqref{Log-form5}. This completes the proof.
\end{proof}

\begin{coro}\label{Coro1}
$u_\varepsilon \rightarrow u$ uniformly in $\overline{\Omega}$, as $\varepsilon\rightarrow 0$.
\end{coro}

\begin{proof}[\bf Proof.]
Using Arzela and Ascoli's theorem, Proposition \ref{weak-solution1} and Proposition \ref{holder-Regularity1}, the proof is completed.
\end{proof}

Before we apply Korevaar's concavity maximum principle to our problem, we need get more regularity of the solution $u_\varepsilon$ in compact subsets of $\Omega$ for small $\varepsilon >0$.

\begin{prop}\label{Regularity1}
For any $\delta>0$, if we choose numbers $\varepsilon_0>0$ and $\tau>0$ sufficiently small, we have the following: for any $\varepsilon$ $(0<\varepsilon<\varepsilon_0)$,
\begin{equation}
M \geq u_{\varepsilon} \geq \tau >0 \ \ \mathrm{in}\ \ \Omega_\delta,
\end{equation}
where $M$ is the constant in Proposition $\ref{holder-Regularity1}$ and $\Omega_\delta$ is the domain defined in Proposition $\ref{prop2}$.
\end{prop}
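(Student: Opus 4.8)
The plan is to obtain the two bounds separately, with the upper bound being essentially free and the lower bound following from the uniform convergence $u_\varepsilon\to u$ to a strictly positive limit. No regularized PDE analysis is needed here; the statement is a soft consequence of facts already established.

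First, the upper bound $u_\varepsilon\le M$ is immediate. Proposition \ref{holder-Regularity1} already gives $\|u_\varepsilon\|_{C^{0,\beta}(\overline\Omega)}\le M$ with $M$ independent of $\varepsilon$, and in particular $\sup_{\overline\Omega}u_\varepsilon\le M$, so certainly $u_\varepsilon\le M$ on $\Omega_\delta$. Thus the content of the proposition is the uniform positive lower bound.

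For the lower bound I would first record that the limit eigenfunction $u$ is bounded below by a positive constant on $\overline{\Omega_\delta}$. By definition $\Omega_\delta=\{x\in\Omega:\dist(x,\partial\Omega)>\delta\}$, so its closure satisfies $\overline{\Omega_\delta}\subset\{x:\dist(x,\partial\Omega)\ge\delta\}\subset\Omega$ and is compact. Since $u$ is the unique \emph{positive} solution of \eqref{PDE1} by Theorem \ref{weak-solution2}, it is continuous and strictly positive on $\Omega$, hence attains a positive minimum
\[
m_\delta:=\min_{x\in\overline{\Omega_\delta}}u(x)>0
\]
on this compact set. I would then transfer the bound to $u_\varepsilon$ using Corollary \ref{Coro1}: since $u_\varepsilon\to u$ uniformly on $\overline\Omega$ as $\varepsilon\to0$, there is $\varepsilon_0>0$ with $\sup_{\overline\Omega}|u_\varepsilon-u|<m_\delta/2$ for all $0<\varepsilon<\varepsilon_0$, whence for every $x\in\Omega_\delta$,
\[
u_\varepsilon(x)\ge u(x)-\tfrac{m_\delta}{2}\ge m_\delta-\tfrac{m_\delta}{2}=\tfrac{m_\delta}{2}=:\tau>0.
\]
Combined with the upper bound this yields $M\ge u_\varepsilon\ge\tau>0$ in $\Omega_\delta$.

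There is no substantial obstacle: the result rests on the $\varepsilon$-uniform sup bound of Proposition \ref{holder-Regularity1} and on the uniform convergence $u_\varepsilon\to u$ with $u>0$ (Corollary \ref{Coro1} together with Theorem \ref{weak-solution2}). The only point requiring mild care is that $\tau$ depends on $\delta$ through the interior minimum $m_\delta$ of the limit, and that the cut-off at distance $\delta$ is precisely what keeps $\overline{\Omega_\delta}$ compactly inside $\Omega$ so that positivity of $u$ produces a genuinely positive infimum; both are guaranteed by the definition of $\Omega_\delta$.
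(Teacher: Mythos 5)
Your proposal is correct and follows exactly the paper's own (one-line) argument: the paper likewise derives the bound by ``combining Corollary \ref{Coro1} and the positivity of $u$,'' with the upper bound coming from the $\varepsilon$-independent constant $M$ of Proposition \ref{holder-Regularity1}. Your write-up merely makes explicit the details the paper leaves implicit (compactness of $\overline{\Omega}_\delta$, the positive interior minimum $m_\delta$, and the choice $\tau=m_\delta/2$), which is a faithful expansion rather than a different route.
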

\begin{proof}[\bf Proof]
Combining Corollary \ref{Coro1} and the positivity of $u$, we can deduce the conclusion.
\end{proof}

\begin{prop}\label{Regularity2}
For any $\delta>0$ and $\varepsilon$ $(0<\varepsilon<\varepsilon_0)$, the solution $u_\varepsilon$ belongs to $C^{\infty}(\Omega_\delta)$ and satisfies
\begin{equation}
\| u_\varepsilon\|_{C^{1,\beta}(\overline{\Omega}_{2\delta})}\leq C,
\end{equation}
where $\beta$ $(0<\beta<1)$ and $C$ are constants independent of $\varepsilon>0$, and $\varepsilon_0$ is the number in Proposition .
\end{prop}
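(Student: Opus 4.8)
The plan is to separate the statement into two assertions that rest on the same structural fact: the qualitative smoothness $u_\varepsilon\in C^\infty(\Omega_\delta)$ for each fixed $\varepsilon$, and the quantitative estimate $\|u_\varepsilon\|_{C^{1,\beta}(\overline{\Omega}_{2\delta})}\le C$ with $C$ independent of $\varepsilon$. The key observation unlocking both is Proposition \ref{Regularity1}: on $\Omega_\delta$ one has $u_\varepsilon\ge \tau>0$, and therefore
\[
\varepsilon u_\varepsilon^2+|\nabla u_\varepsilon|^2\ge \varepsilon\tau^2>0\qquad\text{in }\Omega_\delta .
\]
Thus the argument of the weight $(\varepsilon u_\varepsilon^2+|\nabla u_\varepsilon|^2)^{(p-2)/2}$ appearing in \eqref{regul-prob} stays strictly away from the degenerate value $0$, so on $\Omega_\delta$ the regularized equation is non-degenerate and uniformly elliptic --- exactly the feature the original problem \eqref{PDE1} lacks. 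Note also that $\overline{\Omega}_{2\delta}\subset\subset\Omega_\delta$, so the asserted bound on $\overline{\Omega}_{2\delta}$ is legitimately an interior estimate relative to the non-degeneracy region $\Omega_\delta$.

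For the smoothness, I would first record that $u_\varepsilon\in C^{0,\beta}(\overline{\Omega})$ by Proposition \ref{holder-Regularity1}, and then apply the interior $C^{1,\alpha}$ regularity of Tolksdorf \cite{Tolksdorf1984} (or DiBenedetto \cite{Benedetto1983}) to the divergence-form equation \eqref{regul-prob} on $\Omega_\delta$, obtaining $u_\varepsilon\in C^{1,\alpha}_{\mathrm{loc}}(\Omega_\delta)$. At this stage $\nabla u_\varepsilon$ is locally Hölder continuous, and since the principal coefficients of the non-divergence form of \eqref{regul-prob} are smooth functions of $(u_\varepsilon,\nabla u_\varepsilon)$ on the set $\{\varepsilon u_\varepsilon^2+|\nabla u_\varepsilon|^2>0\}$, freezing them yields a linear uniformly elliptic equation with $C^{\alpha}_{\mathrm{loc}}$ coefficients. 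Schauder theory (see \cite{Book-Gilbarg-Trudinger}) then promotes $u_\varepsilon$ to $C^{2,\alpha}_{\mathrm{loc}}(\Omega_\delta)$, and a standard bootstrap --- each gained derivative of $u_\varepsilon$ yields one more derivative of the coefficients, hence one more via Schauder --- gives $u_\varepsilon\in C^\infty(\Omega_\delta)$.

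For the $\varepsilon$-uniform estimate I would appeal directly to DiBenedetto's interior gradient-Hölder theory \cite{Benedetto1983}, whose constants depend only on $n$, $p$, the structure (ellipticity) constants, $\|u_\varepsilon\|_{L^\infty}$ and the data, but not on any regularization parameter. The operator in \eqref{regul-prob} has the admissible structure with modulus $(\varepsilon u_\varepsilon^2+|\nabla u_\varepsilon|^2)^{(p-2)/2}$, that is, an approximating modulus $\mu^2=\varepsilon u_\varepsilon^2\in[0,\varepsilon M^2]$ that tends to $0$ as $\varepsilon\to0$; DiBenedetto's estimates are precisely designed to be stable in this limit. The remaining ingredients are all uniform: the $L^\infty$ bound of Proposition \ref{holder-Regularity1}, the boundedness of $\lambda^{\varepsilon}_{p,\gamma}$ (monotone in $\varepsilon$ and convergent, by Proposition \ref{weak-solution1}), and the lower-order right-hand side $\varepsilon(\varepsilon u_\varepsilon^2+|\nabla u_\varepsilon|^2)^{(p-2)/2}u_\varepsilon$, which is bounded on $\overline{\Omega}_{2\delta}$ independently of $\varepsilon$. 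This yields $\|u_\varepsilon\|_{C^{1,\beta}(\overline{\Omega}_{2\delta})}\le C$ with $C$ and $\beta$ independent of $\varepsilon$.

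The hard part will be verifying that the structure constants in DiBenedetto's hypotheses are genuinely uniform in $\varepsilon$. The delicate point is that the regularizing modulus $\mu^2=\varepsilon u_\varepsilon^2$ is not constant but varies with $x$ through $u_\varepsilon$; one must check that the ellipticity ratio and the growth bounds of $a(x,u,\xi)=e^{-|x|^2/2}(\varepsilon u^2+|\xi|^2)^{(p-2)/2}\xi$ and of its $\xi$-derivative are controlled by constants that do not blow up as $\varepsilon\to0$. This uses $0\le\varepsilon u_\varepsilon^2\le\varepsilon M^2$ together with the uniform Hölder bound on $u_\varepsilon$, so that the $\varepsilon u^2$ term behaves as a vanishing, lower-order perturbation of the pure $|\xi|^p$ structure; once this is in place, the cited interior estimates apply with constants independent of $\varepsilon$.
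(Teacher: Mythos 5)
Your proposal is correct and follows essentially the same route as the paper: non-degeneracy on $\Omega_\delta$ from $u_\varepsilon\ge\tau$ (Proposition \ref{Regularity1}), the interior $C^{1,\alpha}$ theory of Tolksdorf/DiBenedetto applied to the divergence-form equation with structure constants uniform in $\varepsilon$, and a Schauder bootstrap for $C^\infty$. The ``hard part'' you flag is exactly what the paper dispatches with the two-sided bound $C_1(\sqrt{\varepsilon}+|\nabla u_\varepsilon|)\le(\varepsilon u_\varepsilon^2+|\nabla u_\varepsilon|^2)^{1/2}\le C_2(\sqrt{\varepsilon}+|\nabla u_\varepsilon|)$, with $C_1,C_2$ depending only on $\tau$ and $M$, which verifies the hypotheses of Tolksdorf's Theorem 1 independently of $\varepsilon$.
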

\begin{proof}[\bf Proof]
Since $u_\varepsilon$ is also a weak solution to the equation
\begin{equation}\label{PDE7}
\sum_{j=1}^{n} D_j a_j(x,u,\nabla u)+a(x,u,\nabla u)=0, \ \ \mathrm{in}\ \ \Omega_\delta,
\end{equation}
where for $i=1,\cdots,n$,
\begin{equation}
a_j(x,u,\nabla u)=e^{-|x|^2/2} (\varepsilon u^2+|\nabla u|^2)^{\frac{p-2}{2}}D_j u,
\end{equation}
and
\[
a(x,u,\nabla u)=\lambda^{\varepsilon}_{p,\gamma} |u|^{p-2}{u}e^{-|x|^2/2}-\varepsilon (\varepsilon u^2+|\nabla u|^2)^{\frac{p-2}{2}} ue^{-|x|^2/2}.
\]

We observe that
\begin{equation}
C_1 ({\sqrt{\varepsilon}+|\nabla u|})\leq (\varepsilon u^2+|\nabla u|^2)^{\frac{1}{2}}\leq C_2({\sqrt{\varepsilon}+|\nabla u|}),
\end{equation}
where $C_1=(1/{\sqrt{2}})\min\{(\tau/2),1 \}$ and $C_2=\max\{ 2M,1\}$. Therefore, we can easily checked the assumptions in \cite[Theorem 1]{Tolksdorf1984}, then we can use this theorem to \eqref{PDE7}, we deduce that $u_\varepsilon \in C^1(\Omega_\delta)$ and $\nabla u_\varepsilon$ is H$\mathrm{\ddot{o}}$lder continuous on $\Omega_\delta$. By regularity of the elliptic partial differential equations in \cite{Book-Gilbarg-Trudinger} (see also \cite[Chapter 4]{Lady-Ural1968}), we see that $u_\varepsilon \in C^{\infty}(\Omega_\delta)$. Furthermore, using Tolksdorf's interior estimate in \cite{Tolksdorf1984}, we get $u\in C^{1,\beta}(\overline{\Omega}_{2\delta})$, where $\beta$ is a constant $(0<\beta<1)$. Thus, this complete the proof.
\end{proof}

In the following, we define function
\[
\upsilon_\varepsilon=-\log u_\varepsilon,\ \mathrm{in}\ \Omega_\delta,
\]
and the concavity function to $c_\varepsilon(x,y,t)$ as follows:
\[
c_\varepsilon:=\upsilon_\varepsilon((1-t)x+ty)-(1-t)\upsilon_\varepsilon(x)-t\upsilon_\varepsilon(y),
\]
for $(x,y,t)\in \Omega_\delta\times \Omega_\delta \times [0,1]$. By proposition \ref{Regularity1}, $\upsilon_\varepsilon$ is well defined in $\Omega_\delta$, for small $\delta>0$. Note that $\upsilon_\varepsilon$ is convex if and only if $c_\varepsilon\leq 0$, for all $(x,y,t)\in \Omega_\delta\times \Omega_\delta \times [0,1]$.
\begin{prop}\label{Korevaar's-comcav-maxi-theo}
For any $\delta>0$ and any $\varepsilon$ $(0<\varepsilon<\varepsilon_0)$, the function $c_\varepsilon$ attains its positive maximum on the boundary $\partial \{ \Omega_\delta \times \Omega_\delta\} \times [0,1]$, provided it is anywhere positive.
\end{prop}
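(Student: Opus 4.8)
The plan is to argue by contradiction, assuming that the maximum value $m:=\max c_\varepsilon>0$ is attained at some \emph{interior} point $(x_0,y_0,t_0)\in\Omega_\delta\times\Omega_\delta\times(0,1)$, and to show that this is incompatible with the equation satisfied by $\upsilon_\varepsilon$. First I would clear away the degenerate configurations: since $c_\varepsilon\equiv 0$ whenever $x=y$ or $t\in\{0,1\}$, a positive maximum forces $x_0\ne y_0$ and $t_0\in(0,1)$; moreover, by Proposition \ref{Regularity1} and Proposition \ref{Regularity2} the function $\upsilon_\varepsilon=-\log u_\varepsilon$ is smooth and well defined in a neighborhood of $x_0$, $y_0$ and of the intermediate point $z_0:=(1-t_0)x_0+t_0y_0$, so all the derivatives below make sense.

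Next I would write down the first- and second-order optimality conditions. Vanishing of $\nabla_x c_\varepsilon$ and $\nabla_y c_\varepsilon$ at $(x_0,y_0,t_0)$ gives the gradient alignment
\[
\nabla\upsilon_\varepsilon(x_0)=\nabla\upsilon_\varepsilon(y_0)=\nabla\upsilon_\varepsilon(z_0)=:\xi,
\]
while $\partial_t c_\varepsilon=0$ gives $\langle\xi,y_0-x_0\rangle=\upsilon_\varepsilon(y_0)-\upsilon_\varepsilon(x_0)$. Differentiating twice along the diagonal direction $(v,v)$, under which $z_0$ is displaced by $v$ as well, and using that the Hessian of $(x,y)\mapsto c_\varepsilon(x,y,t_0)$ is negative semidefinite, yields the matrix inequality
\[
N:=D^2\upsilon_\varepsilon(z_0)-(1-t_0)D^2\upsilon_\varepsilon(x_0)-t_0D^2\upsilon_\varepsilon(y_0)\le 0
\]
in the sense of quadratic forms.

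Then I would bring in the equation. Substituting $u_\varepsilon=e^{-\upsilon_\varepsilon}$ into \eqref{regul-prob} and dividing by $u_\varepsilon^{p-1}$ transforms it into
\[
\mathrm{div}\big((\varepsilon+|\nabla\upsilon_\varepsilon|^2)^{\frac{p-2}{2}}\nabla\upsilon_\varepsilon\big)-(\varepsilon+|\nabla\upsilon_\varepsilon|^2)^{\frac{p-2}{2}}(x,\nabla\upsilon_\varepsilon)=\lambda_{p,\gamma}^{\varepsilon}-\varepsilon\,(\varepsilon+|\nabla\upsilon_\varepsilon|^2)^{\frac{p-2}{2}}+(p-1)(\varepsilon+|\nabla\upsilon_\varepsilon|^2)^{\frac{p-2}{2}}|\nabla\upsilon_\varepsilon|^2,
\]
an equation whose right-hand side and whose top-order coefficients depend only on $\nabla\upsilon_\varepsilon$ and whose spatial dependence enters only through the linear drift $(x,\nabla\upsilon_\varepsilon)$. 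Because the gradient takes the common value $\xi$ at $x_0$, $y_0$ and $z_0$, at these three points the principal part becomes the fixed linear form $R\mapsto\langle A_\varepsilon(\xi),R\rangle$ with the positive-definite matrix
\[
A_\varepsilon(\xi)=(\varepsilon+|\xi|^2)^{\frac{p-2}{2}}I+(p-2)(\varepsilon+|\xi|^2)^{\frac{p-4}{2}}\,\xi\otimes\xi>0 .
\]
Evaluating the transformed equation at $z_0$ and subtracting $(1-t_0)$ times its value at $x_0$ and $t_0$ times its value at $y_0$, I would observe that the terms depending only on $\xi$ cancel, and the drift terms cancel as well precisely because $z_0-(1-t_0)x_0-t_0y_0=0$; what survives is $\langle A_\varepsilon(\xi),N\rangle=0$. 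Since $A_\varepsilon(\xi)>0$ and $N\le 0$, this forces $N=0$.

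Finally, to convert the degenerate identity $N=0$ into a genuine contradiction I would note that for fixed $\varepsilon>0$ the matrix $A_\varepsilon(\xi)$ is positive definite, so that near the maximum the concavity function $c_\varepsilon$ is a subsolution of a second-order uniformly elliptic operator $L_\varepsilon$ modelled on $A_\varepsilon$; the strong maximum principle then propagates the value $m$ throughout the connected set $\Omega_\delta\times\Omega_\delta\times(0,1)$, contradicting $c_\varepsilon\equiv 0$ on the faces $t\in\{0,1\}$ and forcing the positive maximum onto $\partial\{\Omega_\delta\times\Omega_\delta\}\times[0,1]$. I expect this last step to be the main obstacle: because the transformed equation is independent of the value $\upsilon_\varepsilon$ and its spatial dependence is only linear, the drift contributes nothing to the combination and the second-order test alone produces no strict sign, so the contradiction cannot be read off pointwise but must be extracted globally through the maximum principle, after verifying that $c_\varepsilon$ is a genuine subsolution in a full neighborhood rather than merely at the extremal point.
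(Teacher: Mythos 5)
Your computations up to the identity $\mathrm{tr}\bigl(A_\varepsilon(\xi)N\bigr)=0$ are correct: the transformed equation for $\upsilon_\varepsilon$, the positivity of $A_\varepsilon(\xi)$ for $p>1$ (its eigenvalues are $(\varepsilon+|\xi|^2)^{\frac{p-2}{2}}$ and $(\varepsilon+|\xi|^2)^{\frac{p-4}{2}}(\varepsilon+(p-1)|\xi|^2)$, both positive thanks to $\varepsilon>0$), and the cancellation of the drift via $z_0-(1-t_0)x_0-t_0y_0=0$ all check out, and they force $N=0$ as you say. But the final step, which is the only step that actually yields the conclusion, contains a genuine gap, and in fact two. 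First, the claim that $c_\varepsilon$ is a subsolution of an elliptic operator \emph{in a full neighborhood} of the maximum is not established and does not follow from your computation: the gradient alignment $\nabla\upsilon_\varepsilon(x_0)=\nabla\upsilon_\varepsilon(y_0)=\nabla\upsilon_\varepsilon(z_0)=\xi$ holds only at the extremal point, and without it the zeroth-order terms $h(\xi)$ and the drift terms evaluated at three different gradients no longer cancel, so no differential inequality for $c_\varepsilon$ is available off the maximum point. Second, even granting a subsolution property, the operator your second-order test produces is \emph{not} uniformly elliptic on $\Omega_\delta\times\Omega_\delta\times(0,1)$: the quadratic-form inequality you use comes only from the diagonal directions $(v,v)$ (a rank-$n$ subspace of the $2n$ spatial directions) and contains no $t$-derivatives at all. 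A strong maximum principle for such a degenerate operator propagates constancy only along directions that preserve $y-x$ and $t$, so it can never reach the faces $t\in\{0,1\}$ or the diagonal $x=y$ where $c_\varepsilon\equiv0$; your proposed contradiction therefore does not materialize. (What such propagation can give is that the maximal value is also attained on $\partial(\Omega_\delta\times\Omega_\delta)\times[0,1]$ --- which is the statement itself, but extracting this rigorously is precisely the technical content of Korevaar's theorem, not a routine application of the classical strong maximum principle.)

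The paper sidesteps all of this: its proof consists of rewriting the equation as $\sum_{i,j}a^{ij}(\nabla\upsilon_\varepsilon)D_{ij}\upsilon_\varepsilon-b(x,\nabla\upsilon_\varepsilon)=0$ and checking the structural hypotheses of Korevaar's concavity maximum principle (Theorem 1.3 in \cite{Korevaar1}): the principal coefficients depend only on $\nabla\upsilon_\varepsilon$, and $b$ is independent of $\upsilon_\varepsilon$ and linear --- hence jointly concave --- in $x$; the conclusion is then exactly the proposition. Note that everything you verified (equal gradients at the three points, $N\le0$, positive definiteness of $A_\varepsilon$, cancellation of the affine drift) is precisely this hypothesis check, specialized and unwound. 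The clean repair of your argument is therefore to stop after the structural verification and invoke Korevaar's theorem, rather than attempt to reprove its propagation mechanism; alternatively, you would need to reproduce Korevaar's actual argument for the degenerate case $b_{\upsilon}=0$, which is substantially more delicate than the pointwise computation plus a generic strong maximum principle.
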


\begin{proof}[\bf Proof.]
By Proposition \ref{Regularity2}, we know that $\upsilon_\varepsilon \in C^\infty (\Omega_\delta)$, and satisfies the equation
\[
\sum_{i,j=1}^{n} a^{ij}(\nabla \upsilon_\varepsilon) D_{ij} \upsilon_\varepsilon-b(x,\nabla \upsilon)=0\ \ \mathrm{in}\ \Omega_\delta,
\]
where
\[
\sum_{i,j=1}^{n} a^{ij}(\nabla \upsilon_\varepsilon) D_{ij} \upsilon_\varepsilon=\mathrm{div}[(\varepsilon+|\nabla  \upsilon_\varepsilon|)^{\frac{p-2}{2}}\nabla \upsilon_\varepsilon],
\]
and
\[
b(x,\nabla \upsilon)=-\lambda^{\varepsilon}_{p,\gamma}+[\varepsilon-(p-1)|\nabla \upsilon_\varepsilon|^2+(x,\nabla \upsilon_\varepsilon)](\varepsilon+|\nabla \upsilon_\varepsilon|^2)^{\frac{p-2}{2}}.
\]
Thus, we apply Korevaar's  concavity maximum principle that is \cite[Theorem 1.3, p.604]{Korevaar1} to function $\upsilon_\varepsilon$ in the convex domain $\Omega_\delta$, this complete the proof.
\end{proof}

\begin{prop}\label{prop4}
For any small $\nu>0$, if we choose $\varepsilon_1>0$ sufficiently small, for any $\varepsilon$ with $0<\varepsilon<\varepsilon_1$, the function $\upsilon_\varepsilon=\ln u_\varepsilon$ is concave in $\Omega_\nu$.
\end{prop}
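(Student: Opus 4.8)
The plan is to fix $\nu>0$, choose $\delta\in(0,\min\{\nu,\delta_0\})$ so that $\Omega_\nu\subseteq\Omega_\delta$ and $\partial\Omega_\delta\subset\Omega\setminus\Omega_{\delta_0}$, and then prove that $c_\varepsilon\le0$ on the whole of $\Omega_\delta\times\Omega_\delta\times[0,1]$ for all sufficiently small $\varepsilon$. Since $\Omega_\nu$ is convex and contained in $\Omega_\delta$, this is exactly the convexity of $\upsilon_\varepsilon=-\log u_\varepsilon$ on $\Omega_\nu$. The first step is the reduction furnished by Proposition \ref{Korevaar's-comcav-maxi-theo}: were $c_\varepsilon$ positive somewhere, its global positive maximum over the compact set $\overline\Omega_\delta\times\overline\Omega_\delta\times[0,1]$ would be attained on $\partial\{\Omega_\delta\times\Omega_\delta\}\times[0,1]$; the faces $t=0,1$ give $c_\varepsilon\equiv0$, so any such maximizer $(x,y,t)$ has $t\in(0,1)$ and at least one of $x,y$ on $\partial\Omega_\delta$. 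Everything therefore reduces to excluding a positive maximum at such a boundary configuration.

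I would carry this out by contradiction and compactness. If the assertion failed there would be $\varepsilon_k\downarrow0$ together with global maximizers $(x_k,y_k,t_k)$ of $c_{\varepsilon_k}$ satisfying $c_{\varepsilon_k}(x_k,y_k,t_k)>0$, $t_k\in(0,1)$, and (say) $x_k\in\partial\Omega_\delta$; after passing to a subsequence $(x_k,y_k,t_k)\to(x_*,y_*,t_*)$ with $x_*\in\partial\Omega_\delta$. Corollary \ref{Coro1} gives $u_{\varepsilon_k}\to u$ uniformly on $\overline\Omega$, Proposition \ref{Regularity1} gives a uniform lower bound $u_{\varepsilon_k}\ge\tau>0$ on $\Omega_\delta$, and Proposition \ref{Regularity2} (applied with a parameter slightly smaller than $\delta$) gives $\varepsilon$-uniform $C^{1,\beta}(\overline\Omega_\delta)$ bounds; hence $\upsilon_{\varepsilon_k}\to\upsilon=-\log u$ in $C^1(\overline\Omega_\delta)$. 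It follows that $c_{\varepsilon_k}\to c$ uniformly on $\overline\Omega_\delta\times\overline\Omega_\delta\times[0,1]$, where $c$ is the concavity function of $\upsilon$, so that $(x_*,y_*,t_*)$ is a global maximizer of $c$ with value $\ge0$ and $x_*\in\partial\Omega_\delta$.

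It then remains to contradict $c(x_*,y_*,t_*)\ge0$ using the boundary behaviour of $u$ recorded in Proposition \ref{prop2}: near $\partial\Omega_\delta\subset\Omega\setminus\Omega_{\delta_0}$ the function $\upsilon$ is strictly convex, and its tangent plane at each point of $\partial\Omega_\delta$ lies below the graph over all of $\overline\Omega_\delta$, touching only at the contact point. Writing $g\ge0$ for the gap between $\upsilon$ and its tangent plane at $x_*$ and setting $z_*=(1-t_*)x_*+t_*y_*$, the elementary identity $c(x_*,y_*,t_*)=g(z_*)-t_*\,g(y_*)$ holds, and the unique-contact clause combined with the first-order conditions satisfied by this maximizer forces the right-hand side to be negative whenever $t_*\in(0,1)$ and $x_*\ne y_*$; this is exactly Korevaar's boundary argument \cite[Lemma 2.4]{Korevaar1}. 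I expect the genuine difficulties to sit precisely at this boundary step: one must (i) make the geometric data $\delta_0,\eta$ and the convergence uniform enough that the supporting-plane property of the limit is actually available at the limiting contact point, and (ii) rule out the degenerate limits $t_*\in\{0,1\}$ and $x_*=y_*$, where $c$ vanishes identically and no contradiction is produced, by keeping the maximizers $(x_k,y_k,t_k)$ bounded away from these configurations. Granting the exclusion, $c_\varepsilon\le0$ on all of $\Omega_\delta\times\Omega_\delta\times[0,1]$ for small $\varepsilon$, which yields the convexity of $\upsilon_\varepsilon$ in $\Omega_\nu$.
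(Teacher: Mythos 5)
Your skeleton — interior reduction via Proposition \ref{Korevaar's-comcav-maxi-theo}, then exclusion of a positive boundary maximum of $c_\varepsilon$ using the tangent-plane property — matches the paper, but your execution of the boundary step has a genuine gap, precisely the point (ii) you flag and then waive with ``granting the exclusion''. In your contradiction-and-compactness scheme nothing keeps the positive maxima of $c_{\varepsilon_k}$ bounded below away from zero, nor the maximizers $(x_k,y_k,t_k)$ away from the degenerate set $\{x=y\}\cup\{t\in\{0,1\}\}$: the limit point may well satisfy $x_*=y_*$ or $t_*\in\{0,1\}$, where the concavity function of \emph{any} function vanishes and the first-order conditions are vacuous, so no contradiction with Proposition \ref{prop2} results. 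Since $c(x,x,t)\equiv 0$, your limiting conclusion ``$c$ has a maximizer with value $\ge 0$'' carries no information whatsoever. Note also that your (correct) identity $c(x_*,y_*,t_*)=g(z_*)-t_*\,g(y_*)$ does not by itself force negativity, since $g(z_*)$ may be large; one needs the first-order conditions at a \emph{nondegenerate} maximizer, and it is exactly the nondegeneracy that your $C^1$-level compactness cannot supply.

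The paper avoids this by never passing to the limit in the concavity functions: it transfers the two hypotheses of Korevaar's boundary lemma to $\upsilon_\varepsilon$ itself, uniformly for small $\varepsilon$, and then works at fixed $\varepsilon$, where the assumed maximum is genuinely positive and the degenerate configurations (where $c_\varepsilon=0$) are excluded automatically. Concretely: the $\varepsilon$-uniform $C^{1+\beta}(\overline{\Omega}_{\nu/2})$ bounds of Proposition \ref{Regularity2} give $u_\varepsilon\to u$ in $C^1(\overline{\Omega}_{\nu/2})$, whence by Proposition \ref{prop1} the uniform gradient bound $|\nabla u_\varepsilon|\ge\eta/2$ on the strip $\overline{\Omega}_{\nu/2}\setminus\Omega_{2\nu}$; the regularized equation is therefore uniformly elliptic there with $\varepsilon$-independent constants, and Schauder estimates yield $\varepsilon$-uniform $C^{2+\beta}$ bounds on the strip, upgrading the convergence to $C^2$. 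This transfers the conclusions of Proposition \ref{prop2} to $\upsilon_\varepsilon$ for small $\varepsilon$ (positivity of $[-D_{ij}\upsilon_\varepsilon]$ on the strip and the strict one-point-contact tangent-plane property on $\partial\Omega_\nu$), after which \cite[Lemma 2.1, p.609]{Korevaar1} excludes a positive boundary maximum of $c_\varepsilon$ at fixed $\varepsilon$, and Proposition \ref{Korevaar's-comcav-maxi-theo} finishes. If you wish to salvage your format, you must incorporate exactly this Schauder $C^2$ upgrade so that the unique-contact property holds for $\upsilon_{\varepsilon_k}$ \emph{before} the limit is taken; with $C^{1,\beta}$ control alone, the degeneration in (ii) cannot be ruled out.
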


\begin{proof}[\bf Proof]
First of all, we can choose $\nu>0$ sufficiently small to get
\begin{equation}
\Omega \setminus \Omega_{2\nu}\subseteq N,
\end{equation}
where $N$ is the neighborhood of $\partial \Omega$ obtained in Proposition \ref{prop1}.

On the other hand, it follows from Proposition \ref{Regularity2} that for small $\varepsilon>0$
\begin{equation}\label{formula-1}
\|u_\varepsilon \|_{C^{1+\beta}(\overline{\Omega}_{\nu/2})}\leq C,
\end{equation}
where $\beta$ $(0<\beta<1)$ and $C$ are constants independent of $\varepsilon$. Then, since the embedding $C^{1+\beta}(\overline{\Omega}_{\nu/2})\hookrightarrow C^{1}(\overline{\Omega}_{\nu/2})$ is compact, we have
\begin{equation}\label{formula-2}
u_\varepsilon \rightarrow u\ \ \text{in}\quad C^1(\overline{\Omega}_{\nu/2})\ \mathrm{as}\ \varepsilon\rightarrow 0.
\end{equation}
By \eqref{formula-2} and Proposition \ref{prop1}, we have
\begin{equation}\label{formula-3}
|\nabla u_\varepsilon|\geq \frac{1}{2}\eta\quad \text{in}\ C^1(\overline{\Omega}_{\nu/2} \setminus \Omega_{2\nu}),
\end{equation}
for small $\varepsilon>0$. By \eqref{formula-1} and \eqref{formula-3}, we can choose the elliptic constant of \eqref{regul-prob} independently of $\varepsilon$, for small $\varepsilon>0$. Therefore, it  follows from Schauder estimates for elliptic partial differential equations in \cite{Book-Gilbarg-Trudinger} that for small $\varepsilon>0$, we have
\begin{equation}\label{formula-4}
\|u_\varepsilon \|_{C^{2+\beta}(\overline{\Omega}_{\nu/2}\setminus \Omega_{2\nu})}\leq C,
\end{equation}
where $C$ is a constant independent of $\varepsilon>0$.

Furthermore, using the compactness of the embedding
\[
C^{2+\beta}(\overline{\Omega}_{\nu/2}\setminus \Omega_{2\nu})\hookrightarrow C^{2}(\overline{\Omega}_{\nu/2}\setminus \Omega_{2\nu}),
\]
By Proposition \eqref{formula-3} and \ref{prop1}, we obtain the following facts: for small $\varepsilon>0$,

\emph{
The matrix $[-D_{ij} \upsilon_\varepsilon]$ is positive on $\overline{\Omega}_{\nu/2} \setminus \Omega_{2\nu}$ where $D_{ij}=\frac{\partial^2}{\partial x_i \partial x_j}$
}
and

\emph{Every tangent plane to\ the\ graph\ of\ $\upsilon_\varepsilon$ on $\partial \Omega_\nu$ lies above the graph on $\overline{\Omega}_\nu$ and contacts it only at tangent point in $\overline{\Omega}_\nu$.}

Finally, according to \cite[Lemma 2.1, p.609]{Korevaar1} and above fact, we see that the concavity function $c_\varepsilon$ does not attain its positive maximum on the boundary $\partial \{ \Omega_\nu \times \Omega_\nu\} \times [0,1]$ for small $\varepsilon>0$. Therefore, by Proposition \ref{Korevaar's-comcav-maxi-theo}, we deduce that $c_\varepsilon$ is non-positive for small $\varepsilon>0$. This completes the proof.
\end{proof}

\begin{prop}\label{prop3}
For any small $\delta>0$, the function $\upsilon=\ln u$ is concave in $\Omega\setminus \Omega_\delta$, where $\Omega_\delta$ is a strongly convex domain in Proposition $\ref{prop2}$.
\end{prop}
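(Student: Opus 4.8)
The plan is to prove concavity in the boundary layer by combining the pointwise Hessian information that is already available there with the interior concavity coming from the regularization, and then gluing the two along line segments. Fix $\delta<\delta_0$, where $\delta_0$ is the number furnished by Proposition \ref{prop2}, and take $\delta$ small enough that $\Omega\setminus\Omega_\delta\subseteq N$, with $N$ the neighborhood of $\partial\Omega$ from Proposition \ref{prop1}. On $N$ we have $u\in C^2(\overline N)$ and $|\nabla u|\geq\eta>0$, so $\upsilon=\ln u$ is of class $C^2$ on $\overline{\Omega\setminus\Omega_\delta}$ and the equation is non-degenerate there. Since $\Omega\setminus\Omega_\delta\subseteq\Omega\setminus\Omega_{\delta_0}$, Proposition \ref{prop2} tells us that the matrix $[-D_{ij}\upsilon]$ is positive definite throughout the layer; equivalently $D^2\upsilon$ is negative definite, so for any unit vector $e$ and any $x$ in the layer, $\langle D^2\upsilon(x)e,e\rangle<0$. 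In particular $\upsilon$ is strictly concave along every line segment that stays inside $\Omega\setminus\Omega_\delta$.

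To upgrade this local statement to genuine concavity, I would first record interior concavity as the limit of Proposition \ref{prop4}. For each small $\nu>0$ that proposition gives $\upsilon_\varepsilon=\ln u_\varepsilon$ concave on the convex set $\Omega_\nu$ for all $\varepsilon<\varepsilon_1$. Because $u_\varepsilon\to u$ uniformly on $\overline\Omega$ by Corollary \ref{Coro1}, and $u\geq\min_{\overline\Omega_\nu}u>0$ on the compact set $\overline\Omega_\nu\subset\Omega$, we get $\upsilon_\varepsilon\to\upsilon$ uniformly on $\overline\Omega_\nu$; concavity is preserved under uniform limits, so $\upsilon$ is concave on $\Omega_\nu$ for every small $\nu$, in particular on $\Omega_\delta$.

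The gluing step then finishes the proof. Since $u\in C^{1+\alpha}(\overline\Omega)$ and $u>0$ in $\Omega$, the function $\upsilon$ is $C^1$ on $\Omega$. Given $x,y\in\Omega\setminus\Omega_\delta$, the segment $[x,y]$ lies in the convex body $\Omega$, and I would study $g(s)=\upsilon((1-s)x+sy)$ on $[0,1]$. Because $\Omega_\delta$ is convex, the set of parameters for which the point lies in $\Omega_\delta$ is a single subinterval, and its complement consists of the one or two end subintervals on which the point lies in the layer. On the former $g$ is concave by the interior concavity just established, while on the latter $g''<0$ by the Hessian bound above; as $g\in C^1$ its derivative is continuous at the crossing points of $\partial\Omega_\delta$, so $g'$ is non-increasing on each piece and hence on all of $[0,1]$, making $g$ concave. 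This yields $\upsilon((1-t)x+ty)\geq(1-t)\upsilon(x)+t\upsilon(y)$, i.e. $\upsilon$ is concave in $\Omega\setminus\Omega_\delta$.

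The main obstacle is precisely that the layer $\Omega\setminus\Omega_\delta$ is not convex, so the negative-definite Hessian from Proposition \ref{prop2} cannot by itself deliver the concavity inequality: a segment joining two points of the layer may dip into the interior $\Omega_\delta$, where $\upsilon$ need not even be $C^2$ because $\nabla u$ may vanish. The device that resolves this is to import the interior concavity from the approximation scheme and to propagate the monotonicity of $g'$ across $\partial\Omega_\delta$ using only the $C^1$ regularity of $\upsilon$, rather than attempting a purely second-order argument on a domain where the required second-order regularity fails.
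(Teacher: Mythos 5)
Your proof is correct, and it does strictly more than the paper's own proof. The paper's entire argument for this proposition is the single limiting step you carry out in your second paragraph: Proposition \ref{prop4} gives concavity of $\upsilon_\varepsilon=\ln u_\varepsilon$ on $\Omega_\nu$ for small $\varepsilon$, Corollary \ref{Coro1} gives $u_\varepsilon\to u$ uniformly, and since $u$ is bounded away from zero on $\overline{\Omega}_\nu$ the functions $\upsilon_\varepsilon$ converge to $\upsilon$ there and concavity passes to the limit. Observe that this argument by itself only yields concavity on the interior sets $\Omega_\nu$, not on the layer $\Omega\setminus\Omega_\delta$ as the statement literally reads; the statement is evidently a slip (or the combination with Proposition \ref{prop2} is left tacit), and the paper only merges the boundary-layer information of Proposition \ref{prop2} with this interior concavity later, in the proof of Theorem \ref{T2}, where the merging is asserted without detail. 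Your first and third paragraphs supply precisely that missing merging: negative-definiteness of $D^2\upsilon$ on $\Omega\setminus\Omega_{\delta_0}$ from Proposition \ref{prop2}, together with $\upsilon\in C^1(\Omega)$ (from $u\in C^{1+\alpha}(\overline{\Omega})$ and $u>0$ in $\Omega$), lets you glue along any segment: since $\Omega_\delta$ is convex, the parameter set where the segment meets $\Omega_\delta$ is a single subinterval, $g'$ is non-increasing there by the interior concavity, strictly decreasing on the one or two end pieces by the Hessian bound, and continuity of $g'$ at the crossing points makes it monotone on all of $[0,1]$. This is sound, and in fact nothing in the gluing requires $x,y$ to lie in the layer, so your argument actually proves concavity of $\upsilon$ on all of $\Omega$, i.e., Theorem \ref{T2} in the strongly convex case. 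What the paper's route buys is brevity; what yours buys is an explicit and complete treatment of the non-convexity of the layer, which the paper glosses over.
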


\begin{proof}[\bf Proof.]
By Proposition \ref{prop4} and Corollary \ref{Coro1}, we deduce the conclusion.
\end{proof}

\subsection{Log-concavity of the first eigenvalue to \eqref{PDE1}}

\begin{proof}[\bf Proof of Theorem \ref{T2}.]
First of all, we note that when $\Omega$ is a strong convex, by Proposition \ref{prop2} and Proposition \ref{prop3}, we conclude its proof.

When $\Omega$ is a bounded convex domain in $\R^n$ with boundary of class $C^2$, we can choose a sequence of strongly convex domain $\Omega_k$ satisfying
\[
\overline{\Omega}_k\subset \Omega_{k+1}\quad \mathrm{for\ all}\quad k\geq 1\quad \mathrm{and}\quad \mathop{\cup}_{k=1}^{\infty}\Omega_k=\Omega.
\]
Let $u_k\in W^{1,p}_{0}(\Omega_k,\gamma)$ be the unique positive solution to \eqref{PDE1} corresponding to $\Omega_k$. Without of generality, we may assume that $\| u_k\|_{L^{p}(\Omega,\gamma)}=1$. We can extend the function $u_k$ to a function in $\Omega$ by letting $u_k=0$ in $\Omega\setminus \Omega_k$. Then $u_k\in W^{1,p}_0(\Omega,\gamma)$. Since $\|u_k \|_{L^p(\Omega_k)}=\|u_k \|_{L^p(\Omega)}=1$, by the minimizing property of $u_k$ and the monotonically decreasing property of eigenvalue, we see that $\{u_k\}$ is bounded in $W^{1,p}_0(\Omega,\gamma)$. Therefore, there exists a subsequence $u_{k'}$ and a function $u\in W^{1,p}_0(\Omega,\gamma)$ satisfying
\[
u_{k'}\rightharpoonup u\quad \mathrm{weakly\ in}\ W^{1,p}_0(\Omega,\gamma)\ \mathrm{as}\ k'\rightarrow \infty,
\]
and
\[
u_{k'}\rightarrow u\quad \mathrm{a.e.\ in}\ \Omega.
\]
Since $u_k\geq 0$ in $\Omega$, we know that $u\geq0$. By the Sobolev embedding theorem, we have that $\|u\|_{L^p(\Omega)}=1$. By the minimiizing properties of $u_k$ and the lower semi-continuity of the norm of $W^{1,p}_0(\Omega,\gamma)$, we deduce that $u$ is a solution of \eqref{PDE1}. Then it follows from the uniqueness of the positive solution to \eqref{PDE1}, we deduce that $u$ is the unique solution to \eqref{PDE1}.

On the other hand, since the $C^{\beta}(\overline{\Omega})$-estimate of the solution to \eqref{PDE1} is independent of small smooth perturbation of the boundary $\partial \Omega$, we obtain the estimate
\[
\| u_k\|_{C^{\beta}(\overline{\Omega}_k)}\leq c,
\]
where $0<\beta<1$ and $c>0$ are constants independent of $k$.

Finally, according to Arzel\`{a} and Ascoli's theorem, we obtain
\[
u_k\rightarrow u\quad \mathrm{uniformly\ on\ any\ compact\ subset\ of}\ \Omega.
\]
Since Theorem \ref{T2} hold when $\Omega_k$ is strongly convex, that is, $u_k$ is log-concave, we deduce that $u$ is log-concave. This completes its proof.
\end{proof}

\section{BRUNN-MINKOWSKI TYPE INEQUALITY}\label{Se5}

In this section, we prove the Brunn-Minkowski-type inequality for the first eigenvalue to weighted $p$-Laplace equation.

Define the set
\[
\C=\left\{ x\in \Omega:\ \nabla u(x) =0\right\},
\]
where $u$ is a non-trivial solution of \eqref{PDE1} in $\Omega$. Thus, the weighted $p$-Laplace operator (applied to $u$) is uniformly elliptic on compact subsets of $\Omega\setminus \bar{\C}$. By a standard regularity results for solution of elliptic equations (see also Section \ref{Se3}), we know that $u\in C^2(\Omega\setminus \bar{\C})$; Moreover, the function $w:=-\ln u$ solves
\begin{equation}\label{PDE2}
\left\{
\begin{aligned}
& \Delta_p w-(x,\nabla w)|\nabla w|^{p-2}=\lambda_{p,\gamma}+(p-1)|\nabla w|^{p}, \ \ \ \ \ \  \mathrm {in} \ \ \Omega \setminus \bar{\C},  \\
& \mathop {\lim}_{x\rightarrow \partial \Omega} w=+\infty.
\end{aligned}
\right.
\end{equation}

\begin{lemm}[Lemma 2.1 in \cite{Andrea-Paolo2006}]\label{L1}
For $i=0,1$, let $\Omega_i$ be an open, bounded, convex set and $u_i\in C^1(\Omega_i)$ be a strictly concave function such that $ \mathop {\lim}_{x\rightarrow \partial \Omega_i} u_i=-\infty$. Then, for $t\in[0,1]$, $\tilde{u}\in C^1(\Omega_t)$ and it is strictly concave; moreover, for every $z\in \Omega_t$, there exists a unique couple of points $(x,y)\in \Omega_0\times \Omega_1$ such that
\begin{equation}\nonumber
  z=(1-t)x+ty,
\end{equation}
\begin{equation}\nonumber
  \tilde{u}(z)=(1-t)u_0(x)+tu_1(y),
\end{equation}
\begin{equation}\nonumber
  \nabla \tilde{u}(z)=\nabla u_0(x)=\nabla u_1(y).
\end{equation}
If in addition $z\in\Omega_t$ is such that $u_0$ and $u_1$ are twice differentiable at corresponding point $x$ and $y$ respectively, and $D^2u(x_0), D^2u_1(y)<0$, then $\tilde{u}$ is twice differentiable at $x$ and
\begin{equation}\nonumber
D^2 \tilde{u}(z)=\left[  (1-t)( D^2\tilde{u}(x) )^{-1}+t( D^2 \tilde{u}_1(y))^{-1}\right]^{-1}.
\end{equation}
\end{lemm}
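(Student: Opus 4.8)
The plan is to realize $\tilde u$ as the \emph{sup-convolution} (Minkowski combination) of $u_0$ and $u_1$, namely
\[
\tilde u(z)=\sup\Big\{(1-t)u_0(x)+tu_1(y):\ (1-t)x+ty=z,\ x\in\Omega_0,\ y\in\Omega_1\Big\},
\]
and to read off all three identities from the optimality of the maximizing pair. The cases $t=0,1$ are trivial, so I take $t\in(0,1)$. First I would eliminate $y=(z-(1-t)x)/t$ and study $F_z(x)=(1-t)u_0(x)+tu_1\big((z-(1-t)x)/t\big)$, a strictly concave function of $x$ alone. Because $u_0,u_1\to-\infty$ at the respective boundaries, $F_z$ is coercive (it tends to $-\infty$ as $x$ approaches $\partial\Omega_0$ or as $y$ approaches $\partial\Omega_1$), so the supremum is attained at an interior point; strict concavity makes the maximizer $x=x(z)$ unique, and then $y=y(z)$ is determined. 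This yields existence and uniqueness of the couple $(x,y)$ with $z=(1-t)x+ty$ and $\tilde u(z)=(1-t)u_0(x)+tu_1(y)$. The stationarity condition $\nabla F_z(x)=(1-t)\big(\nabla u_0(x)-\nabla u_1(y)\big)=0$ gives the common-gradient identity $\nabla u_0(x)=\nabla u_1(y)$.

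Next I would establish concavity and $C^1$-regularity. Concavity of $\tilde u$ is the standard fact that a sup-convolution of concave functions is concave: given $z_0,z_1$ with optimal couples $(x_i,y_i)$, the convex combination of these couples is admissible for the corresponding combination of $z_0,z_1$, and concavity of $u_0,u_1$ gives the inequality; a short argument using strict concavity of $u_0,u_1$ (equality would force both to be affine on a segment) upgrades this to strict concavity of $\tilde u$. For $C^1$ I would invoke the envelope (Danskin) principle: since the maximizer is unique and $F_z$ is smooth in $(x,z)$, the directional derivatives of $\tilde u$ are computed by differentiating $F_z$ at the frozen maximizer, which returns exactly the common gradient $p:=\nabla u_0(x)=\nabla u_1(y)$; thus $\tilde u$ is differentiable with $\nabla\tilde u(z)=p$. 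Continuity of $z\mapsto(x(z),y(z))$ (Berge's maximum theorem, using uniqueness) together with continuity of $\nabla u_1$ then shows $\nabla\tilde u$ is continuous, so $\tilde u\in C^1(\Omega_t)$.

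The main obstacle is the Hessian formula, because $u_0,u_1$ are assumed twice differentiable only at the single points $x,y$; hence the slick conjugate-function computation (under Legendre duality the combination corresponds to $\tilde u^*=(1-t)u_0^*+tu_1^*$, whose Hessian is additive and yields the stated inverse-of-sum-of-inverses formula) is only heuristic, and I would instead prove the second-order Taylor expansion of $\tilde u$ at $z$ by matched bounds. Write $A=D^2u_0(x)<0$, $B=D^2u_1(y)<0$ and $C=\big[(1-t)A^{-1}+tB^{-1}\big]^{-1}$. A Lagrange-multiplier computation shows that the strictly concave quadratic $(1-t)\langle A\xi,\xi\rangle+t\langle B\eta,\eta\rangle$, maximized over the affine constraint $(1-t)\xi+t\eta=\zeta$, has maximal value $\langle C\zeta,\zeta\rangle$, attained at $\xi=A^{-1}C\zeta$, $\eta=B^{-1}C\zeta$. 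For the lower bound I would insert this admissible competitor into the sup-convolution and Taylor-expand $u_0,u_1$ at $x,y$, obtaining $\tilde u(z+\zeta)\ge\tilde u(z)+\langle p,\zeta\rangle+\tfrac12\langle C\zeta,\zeta\rangle+o(|\zeta|^2)$. For the upper bound I would expand at the \emph{true} optimal perturbation $(\xi^*,\eta^*)$; after showing $\xi^*,\eta^*=O(|\zeta|)$ (which follows from $\nabla u_0(x+\xi^*)=\nabla u_1(y+\eta^*)\to p$ and the invertibility of $A,B$ at the point, so that $\xi^*\approx A^{-1}(p'-p)$, $\eta^*\approx B^{-1}(p'-p)$ with $p'-p=O(|\zeta|)$), the Taylor expansion together with the quadratic bound $(1-t)\langle A\xi^*,\xi^*\rangle+t\langle B\eta^*,\eta^*\rangle\le\langle C\zeta,\zeta\rangle$ gives the matching upper estimate. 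Combining the two bounds proves that $\tilde u$ is twice differentiable at $z$ with $D^2\tilde u(z)=C$, the asserted identity. The delicate point throughout is controlling the $o(|\zeta|^2)$ remainders and the modulus of the maximizer using only pointwise second differentiability rather than $C^2$ regularity.
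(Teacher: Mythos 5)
Your proposal is correct in substance, but note first that this paper contains no proof of Lemma \ref{L1} at all: the lemma is imported verbatim from \cite{Andrea-Paolo2006}, and the proof there runs through Legendre conjugation rather than your direct variational expansion. In the cited proof the sup-convolution structure is encoded in the identity $(-\tilde u)^*=(1-t)(-u_0)^*+t(-u_1)^*$ (the present paper even records this as \eqref{BMI-form1}); essential smoothness and essential strict convexity dualize under conjugation (Rockafellar \cite{Rocka1970}, Corollaries 26.3.2 and 25.5.1), which yields $\tilde u\in C^1(\Omega_t)$ and strict concavity in one stroke, the optimal pair $(x,y)$ and the common-gradient identity $\nabla\tilde u(z)=\nabla u_0(x)=\nabla u_1(y)$ fall out of the equality case in Fenchel's inequality, and the Hessian formula follows because a convex function has a pointwise second-order expansion with nondegenerate Hessian at a point if and only if its conjugate has one at the dual point, with inverse Hessian --- so the additive conjugate identity gives $D^2\tilde u(z)=\bigl[(1-t)A^{-1}+tB^{-1}\bigr]^{-1}$ directly. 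Your dismissal of this computation as ``only heuristic'' is therefore too quick: the conjugacy argument is rigorous at the level of pointwise (Alexandrov-type) second derivatives, and that is exactly what makes the cited proof short. Your direct route --- coercivity and strict concavity of $F_z$ for existence and uniqueness, stationarity for the common gradient, uniqueness of the supergradient (or Berge's theorem plus the envelope principle) for $C^1$, and matched second-order bounds for the Hessian --- is sound and has the merit of being self-contained, but it is longer and hides one step that needs explicit justification: in your upper bound you use the gradient expansion $\nabla u_0(x+\xi)=p+A\xi+o(|\xi|)$, which does \emph{not} follow from a pointwise second-order expansion of the function for general functions; for concave functions it does (the second-order expansion of $f$ at a point is equivalent to the first-order expansion of $\nabla f$ there, a classical fact in convex analysis, cf.\ Rockafellar--Wets, Variational Analysis, Theorem 13.51), and this is precisely where concavity, and not merely two-point twice differentiability, must be invoked. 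With that reference supplied, your matched bounds --- the lower one via the competitor $\xi=A^{-1}C\zeta$, $\eta=B^{-1}C\zeta$ satisfying $(1-t)\xi+t\eta=\zeta$, the upper one via $\xi^*,\eta^*=O(|\zeta|)$ and the constrained maximum value $\langle C\zeta,\zeta\rangle$ of the quadratic form --- correctly deliver $\tilde u(z+\zeta)=\tilde u(z)+\langle p,\zeta\rangle+\tfrac12\langle C\zeta,\zeta\rangle+o(|\zeta|^2)$, i.e.\ the asserted formula.
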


We remark that in the proof of \cite[Lemma 2.1,p.49]{Andrea-Paolo2006} (see \cite[formula (12), p.50]{Andrea-Paolo2006}), we have the following conclusion, that is
\begin{equation}\label{BMI-form1}
(-\tilde{u})^*=(1-t)(-u_0)^*+t(-u_1)^*,
\end{equation}
where the symbol $*$ means the usual conjugation of convex functions (see \cite[section 12]{Rocka1970}).

\begin{lemm}[Lemma 2.2 in \cite{Andrea-Paolo2006}]\label{L2}
Let $\Omega_0$ and $\Omega_1$ be two open bounded convex sets and let $u_i:\Omega_i\rightarrow \R$, $u_i\in C^1(\Omega_i)$, for $i=0,1$, be a strictly concave function. Fix $t\in[0,1]$, for a point $z\in \Omega_t$, let $(x,y)\in \Omega_0\times \Omega_1$ be a unique couple of points determined by Lemma \ref{L1}. If $\nabla \tilde{u}(z)\neq 0$, and $u_0$ and $u_1$ are twice differentiable at $x$ and $y$ respectively, with $D^2 u_0(x)$, $D^2 u_1<0$, then
\begin{equation}\nonumber
\Delta_p \tilde{u}(z)\geq (1-t)\Delta_p u_0(x)+t\Delta_p u_1(y).
\end{equation}
\end{lemm}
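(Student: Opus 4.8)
The plan is to reduce the desired inequality to a single comparison of Hessians in the Loewner (positive semidefinite) order, then to combine the operator convexity of the matrix inverse with the linearity and monotonicity of an associated linear functional. Throughout, write $g := \nabla \tilde u(z)$, which by Lemma \ref{L1} equals $\nabla u_0(x) = \nabla u_1(y)$; since $g \neq 0$ by hypothesis, set $n := g/|g|$. The decisive structural point is that, thanks to the common gradient supplied by Lemma \ref{L1}, the \emph{same} unit vector $n$ governs the $p$-Laplacian of all three functions at the corresponding points.

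First I would rewrite each $p$-Laplacian by the pointwise formula \eqref{p-Lapla-form2}, which is legitimate precisely because $g \neq 0$. Introducing, for a symmetric matrix $M$, the linear functional
\[
\Phi(M) := \operatorname{tr}(M) + (p-2)\langle M n, n\rangle,
\]
this gives
\[
\Delta_p \tilde u(z) = |g|^{p-2}\,\Phi\big(D^2\tilde u(z)\big), \quad \Delta_p u_0(x) = |g|^{p-2}\,\Phi\big(D^2 u_0(x)\big), \quad \Delta_p u_1(y) = |g|^{p-2}\,\Phi\big(D^2 u_1(y)\big).
\]
Since $|g|^{p-2}>0$, the claim is equivalent to
\[
\Phi\big(D^2\tilde u(z)\big) \geq (1-t)\,\Phi\big(D^2 u_0(x)\big) + t\,\Phi\big(D^2 u_1(y)\big).
\]

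Next I would invoke the Hessian identity of Lemma \ref{L1}. Setting $P := -D^2 u_0(x)$ and $Q := -D^2 u_1(y)$, both positive definite by the assumption $D^2 u_0(x),\,D^2 u_1(y) < 0$, that identity yields $D^2\tilde u(z) = -R$ with $R := \big[(1-t)P^{-1} + tQ^{-1}\big]^{-1} > 0$. Applying the operator convexity of the map $X \mapsto X^{-1}$ on positive definite matrices to the pair $P^{-1}, Q^{-1}$ produces the parallel-sum inequality
\[
R = \big[(1-t)P^{-1} + tQ^{-1}\big]^{-1} \leq (1-t)P + tQ
\]
in the Loewner order. Using linearity of $\Phi$ to remove the minus signs, the goal becomes
\[
\Phi(R) \leq (1-t)\,\Phi(P) + t\,\Phi(Q) = \Phi\big((1-t)P + tQ\big),
\]
so it suffices to prove that $\Phi$ is monotone nondecreasing for the Loewner order.

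This monotonicity is the one place where the sign of $p-2$ threatens the argument, and it is the step I expect to be the main obstacle for $1<p<2$. It is resolved by choosing an orthonormal basis $\{e_1,\ldots,e_n\}$ with $e_1 = n$, so that $\operatorname{tr}(M) = \langle M n, n\rangle + \sum_{i\ge 2}\langle M e_i, e_i\rangle$ and hence
\[
\Phi(M) = (p-1)\langle M n, n\rangle + \sum_{i=2}^{n}\langle M e_i, e_i\rangle.
\]
Since $p>1$, every coefficient is strictly positive, so $\Phi$ is a positive combination of the Loewner-monotone forms $M \mapsto \langle M v, v\rangle$ and is therefore itself Loewner-monotone. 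Applying this to $R \leq (1-t)P + tQ$ gives the displayed inequality for $\Phi$, and multiplying back by $|g|^{p-2}$ yields $\Delta_p \tilde u(z) \geq (1-t)\Delta_p u_0(x) + t\Delta_p u_1(y)$, completing the proof.
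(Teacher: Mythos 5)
Your proof is correct and is essentially the argument of \cite[Lemma 2.2]{Andrea-Paolo2006}, which this paper quotes without reproducing a proof: there too one writes $\Delta_p u=|\nabla u|^{p-2}\operatorname{tr}\bigl[(I+(p-2)\,n\otimes n)\,D^2u\bigr]$ with the single unit vector $n$ supplied by the equal gradients of Lemma \ref{L1}, applies the matrix arithmetic--harmonic mean inequality to the Hessian identity of that lemma, and concludes via Loewner monotonicity of the trace form, whose eigenvalue coefficients $p-1$ and $1$ are positive precisely because $p>1$. No gaps; your basis decomposition of $\Phi$ resolves the sign issue for $1<p<2$ exactly as the cited source does.
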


\begin{proof}[\bf Proof of Theorem \ref{T1}.]
Let $\Omega_0$ and $\Omega_1$ be open and bounded. Let $u_i$ be the solution of problem, for $i=0,1$. For simplicity, set
\[
\lambda_i=\lambda_{p,\gamma}(\Omega_i)\ \ \mathrm{for}\ \ i=0,1,\ \ \lambda_t=(1-t)\lambda_0+t\lambda_1.
\]
Set $w_i=-\ln u_i$ for $i=0,1$. By Theorem \ref{T2}, we have $w_i$ is convex in $\Omega_i$, it belongs to $C^2(\Omega_i\setminus \bar{\C}_i)$ and it is a solution of \eqref{PDE2} in $\Omega_i \setminus \bar{\C}_i$, where $\C_i=\{x\in\Omega_i:\ \nabla u_i(x)=0 \}$ for $i=0,1$. Given $x\in \Omega_t$, we define
\begin{equation}
\tilde{w}(x)=\inf \left\{ (1-t)w_0(x_0)+tw_1(x_1):\ x_i \in \overline{\Omega}_i\ \text{for}\ i=0,1, x=(1-t)x_0+tx_1\right\}.
\end{equation}
From \cite[Corollaries 26.3.2 and 25.5.1]{Rocka1970}, we know that $\tilde{w}(x)\in C^1(\Omega_t)$.

Given $\varepsilon >0$, we define
\[
w_{i,\varepsilon}=w_i(x)+\varepsilon \frac{x^2}{2},\ \ \ x\in \Omega_i.
\]
The function ${w}_{i,\varepsilon}$ is strictly convex function in $\Omega_i$ and
\begin{equation}\label{BMI-form2}
{w}_{i,\varepsilon}\in C^2(\Omega_i \setminus \bar{\C}_i),\quad\  \mathrm{for}\ i=0,1.
\end{equation}
We consider the inf-convolution $\tilde{w}_\varepsilon$ of $w_{0,\varepsilon}$, $w_{1,\varepsilon}$, that is,
\begin{equation}\label{BMI-form3}
\tilde{w}_{\varepsilon}=\inf \left\{ (1-t)w_{0,\varepsilon}(x_0)+tw_{1,\varepsilon}(x_1):\ x_i \in \overline{\Omega}_i\ \text{for}\ i=0,1, x=(1-t)x_0+tx_1\right\}.
\end{equation}
It is clear that $w_{i,\varepsilon}$ converges uniformly to $w_i$ in $\Omega_i$ for $i=0,1$. Similar to \cite[p.53]{Andrea-Paolo2006}, we can also see that $\tilde{w}_{\varepsilon}$ converges uniformly to $\tilde{w}$ in $\Omega_t$, that is,
\begin{equation}\label{BMI-form4}
\nabla \tilde{w}_{\varepsilon}\ \mathrm{converges\ uniformly\ to }\ \nabla \tilde{w}\ \mathrm{on\ every\ compact\ subset\ of\  } \Omega_t.
\end{equation}
Actually, by \cite[Theorem 25.7]{Rocka1970}, we conclude that
\begin{equation}\label{BMI-form5}
\nabla \tilde{w}_{i,\varepsilon}\ \mathrm{converges\ uniformly\ to }\ \nabla \tilde{w_i}\ \mathrm{on\ every\ compact\ subset\ of\  } \Omega_i,
\end{equation}
for $i=0,1$, and
\begin{equation}\label{BMI-form6}
\nabla \tilde{w}_{\varepsilon}\ \mathrm{converges\ uniformly\ to }\ \nabla \tilde{w}\ \mathrm{on\ every\ compact\ subset\ of\  } \Omega_t,
\end{equation}

In the following, we express the $p$-Laplacian of $w_{i,\varepsilon}$ in terms of the $p$-Laplacian of $w_i$, that is, for $i=0,1$ and $x\in \Omega_i \setminus \bar{\C}_i$, put
\[
n_{i,\varepsilon}(x)=\frac{\nabla w_{i,\varepsilon}(x)}{|\nabla w_{i,\varepsilon}(x)|}.
\]
According to \cite[p.53-54]{Andrea-Paolo2006}, we have
\begin{equation}\label{BMI-form7}
\begin{aligned}
\Delta_p w_{i,\varepsilon}(x)&=\Delta_p w_i(x)+(|\nabla w_i(x)-\varepsilon x|^{p-2}-|\nabla w_i(x)|^{p-2})\Delta w_i(x)\\
&\quad + (p-2)\left[ |\nabla w_i(x)-\varepsilon x|^{p-2} \langle D^2 w_i(x)n_{i,\varepsilon}, n_{i,\varepsilon} \rangle -|\nabla w_i(x)|^{p-2}  \langle D^2 w_i(x)n_{i}, n_{i} \rangle \right]\\
&\quad -\varepsilon(n+p-2)|\nabla w_i(x)-\varepsilon x|^{p-2}.
\end{aligned}
\end{equation}
Let
\[
\C_t=\left\{ x\in \Omega_t:\ \nabla  \tilde{w} =0\right\},
\]
By \cite[Theorem 26.5 and 23.8]{Rocka1970} and \eqref{BMI-form1}, we have that
\begin{equation}\label{BMI-form8}
\begin{aligned}
\C_t&=\partial(-\tilde{w}^{*}(0))=\partial((1-t)(-w^{*}_0)+t(-w^{*}_1))\\
&=(1-t)\partial(-w^{*}_0)+t\partial(-w^{*}_1) =(1-t)\C_0+t\C_1,
\end{aligned}
\end{equation}
where the last equality, we used the fact that $\C_i=\{x\in \Omega_i:\nabla u_i=0\}=\{x\in \Omega_i:\nabla w_i=0\}$.
By Lemma \ref{L1} and \eqref{BMI-form2}, we have
\[
\tilde{w}_\varepsilon\in C^2(\Omega_t \setminus \bar{\C}_t).
\]
Moreover, for a fixed $z\in \Omega_t\setminus \bar{\C}_t$, by Lemma \ref{L1}, there exists a unique $(x_\varepsilon,y_\varepsilon)\in \Omega_0\times \Omega_1$, depending on $z$, such that $z=(1-t)x+ty$ and
\begin{equation}\label{BMI-form9}
\nabla \tilde{w}_\varepsilon (z)=\nabla {w}_{0,\varepsilon}(x_\varepsilon)=\nabla {w}_{1,\varepsilon}(y_\varepsilon).
\end{equation}
Since $\nabla \tilde{w}(z)\neq 0$, we have
\[
\nabla \tilde{w}_{0,\varepsilon}(x_\varepsilon)=\nabla \tilde{w}_{1,\varepsilon}(y_\varepsilon)\neq 0.
\]
By \eqref{BMI-form5} and \eqref{BMI-form6}, we can deduce that there exists $\varepsilon_1>0$ such that, for $0<\varepsilon<\varepsilon_1$,
\[
(x_\varepsilon,y_\varepsilon)\in \left( (\Omega_0\setminus \bar{\C}_0)\times (\Omega_1\setminus \bar{\C}_1) \right).
\]
Then, using Lemma \ref{L2}, we have
\[
\Delta_p \tilde{w}_{\varepsilon}(z)\leq (1-t) \Delta_p {w}_{0,\varepsilon}(x_\varepsilon)+t\Delta_p {w}_{1,\varepsilon}(y_\varepsilon).
\]
Thus, by \eqref{BMI-form7}, we have
\[
\Delta_p \tilde{w}_{\varepsilon}(z) \leq (1-t)\Delta_p w_{0}(x_\varepsilon)+t\Delta_p {w}_{1}(y_\varepsilon)+F_\varepsilon(z),
\]
where
\begin{align*}
{F}_\varepsilon(z)&=(1-t)\Delta w_0(x_\varepsilon)(|\nabla w_0(x_\varepsilon)-\varepsilon x_\varepsilon|^{p-2}-|\nabla w_0(x_\varepsilon)|^{p-2})+ (1-t)(p-2)\\
&\quad \times\left[ |\nabla w_0(x_\varepsilon)-\varepsilon x_\varepsilon|^{p-2} \langle D^2 w_0(x_\varepsilon)n_{0,\varepsilon}, n_{0,\varepsilon} \rangle -|\nabla w_0(x_\varepsilon)|^{p-2}  \langle D^2 w_0(x_\varepsilon)n_{0,\varepsilon}, n_{0,\varepsilon} \rangle \right]\\
&\quad +t\Delta w_1(y_\varepsilon)(|\nabla w_1(y_\varepsilon)-\varepsilon y_\varepsilon|^{p-2}-|\nabla w_0(y_\varepsilon)|^{p-2})+ t(p-2)\\
&\quad \times\left[ |\nabla w_1(y_\varepsilon)-\varepsilon y_\varepsilon|^{p-2} \langle D^2 w_1(y_\varepsilon)n_{1,\varepsilon}, n_{1,\varepsilon} \rangle -|\nabla w_1(y_\varepsilon)|^{p-2}  \langle D^2 w_1(y_\varepsilon)n_{1,\varepsilon}, n_{1,\varepsilon} \rangle \right]\\
&\quad -\varepsilon(n+p-2)\left[(1-t) |\nabla w_0(x_\varepsilon)-\varepsilon x_\varepsilon|^{p-2}+t |\nabla w_1(y_\varepsilon)-\varepsilon y_\varepsilon|^{p-2}\right].
\end{align*}
Then, by \eqref{PDE2}, we can deduce that
\begin{align*}
\Delta_p \tilde{w}_{\varepsilon}(z)& \leq (1-t)\lambda_0+t\lambda_1+(1-t)(p-1)|\nabla w_0(x_\varepsilon)|^p+t(p-1)|\nabla w_1(y_\varepsilon)|^p\\
&\quad +(1-t)(x_\varepsilon,\nabla w_0(x_\varepsilon))+t(y_\varepsilon,\nabla w_1(y_\varepsilon))+{F}_\varepsilon(z).
\end{align*}
By the definition of $w_{0,\varepsilon}$, $w_{1,\varepsilon}$ and \eqref{BMI-form9}, we can write
\[
\nabla w_0(x_\varepsilon)=\nabla \tilde{w}_\varepsilon(z)+\varepsilon x_\varepsilon,\ \ \nabla {w}_1(y_\varepsilon)=\nabla \tilde{w}_\varepsilon(z)+\varepsilon y_\varepsilon.
\]
Thus, we have
\begin{equation}\label{BMI-form10}
\Delta_p \tilde{w}_{\varepsilon}(z) \leq (1-t)\lambda_0+t\lambda_1+(p-1)|\nabla \tilde{w}_{\varepsilon}(z)|^p  +(z,\nabla \tilde{w}_{\varepsilon}(z))+\tilde{F}_\varepsilon(z),
\end{equation}
where
\begin{align*}
\tilde{F}_\varepsilon(z)&={F}_\varepsilon(z)+\varepsilon[(1-t)|x_\varepsilon|^2+t|y_\varepsilon|^2] +(p-1)\\
&\quad \times [|\nabla \tilde{w}_\varepsilon(z)|^p-(1-t)|\nabla \tilde{w}_\varepsilon(z)+\varepsilon x_\varepsilon|^p-t|\nabla \tilde{w}_\varepsilon(z)+\varepsilon y_\varepsilon|^p].
\end{align*}
It is clear that
\[
\tilde{F}_\varepsilon\rightarrow 0 \ \ \mathrm{pointwise \ in}\ \Omega_t \setminus \bar{\C}_t.
\]
Moreover, if $T$ is a compact subset of $\Omega_t\setminus \bar{\C}_t$, then there exist $\overline{\varepsilon}=\overline{\varepsilon}(T)$ and two compact subset of $A_0$ and $A_1$ of $\Omega_0\setminus \bar{\C}_0$ and $\Omega_1\setminus \bar{\C}_1$ respectively, such that. for every point $z\in T$ and $0<\varepsilon<\overline{\varepsilon}$,
\[
(x_\varepsilon,y_\varepsilon)\in A_0 \times A_1,
\]
it follows from \eqref{BMI-form9} and the definition of $\C_i$ for $i=0,1$. Since $w_i\in C^2(\Omega_i\setminus \bar{\C}_i)$, for $i=0,1$, we deduce that $|\tilde{F}_\varepsilon|$ is uniformly bounded on $T$ w.r.t $0<\varepsilon<\overline{\varepsilon}$. Consequently, the sequence $\tilde{F}_\varepsilon$ is uniformly bounded on compact subset of $\Omega_t \setminus \bar{\C}_t$.

Set $\tilde{u}_\varepsilon(z)=e^{-\tilde{w}_\varepsilon(z)}$, $z\in \Omega_t$, $\varepsilon>0$. Since $\tilde{w}_\varepsilon \rightarrow +\infty$ on the boundary, we deduce that $\tilde{u}_\varepsilon\in C(\overline{\Omega}_t)$ and it vanished on $\partial \Omega_t$. Moreover, by Lemma \ref{L1} and \eqref{BMI-form9}, for every $z\in \Omega_t$, we have
\begin{align*}
|\nabla \tilde{u}_\varepsilon(z)|&=|\nabla \tilde{w}_\varepsilon(z)|e^{-\tilde{w}_\varepsilon(z)}=|\nabla {w}_{0,\varepsilon}|^{1-t}|\nabla {w}_{1,\varepsilon}|^{t}e^{-(1-t){w}_{0,\varepsilon}-t{w}_{1,\varepsilon}}\\
&=[|\nabla {w}_{0,\varepsilon}| e^{-{w}_{0,\varepsilon}}]^{1-t}[|\nabla {w}_{1,\varepsilon}| e^{-{w}_{1,\varepsilon}}]^{t}\\
&=|\nabla ( e^{-{w}_{0,\varepsilon}(z)})|^{1-t} |\nabla ( e^{-{w}_{1,\varepsilon}(z)})|^{t}.
\end{align*}
From the last equality, the definition of $w_{0,\varepsilon}$ and $w_{1,\varepsilon}$ and the regularity of $w_0$ and $w_1$ (see Section 3), we obtain that $|\nabla\tilde{u}_\varepsilon|$ is bounded in $\overline{\Omega}_t$, and, in particular, we have $\tilde{u}_\varepsilon \in W^{1,p}_0(\Omega_t,\gamma)$. By \eqref{BMI-form10}, for $z\in \Omega_t\setminus \bar{\C}_t$, we have
\begin{align*}
\Delta_p \tilde{u}_{\varepsilon}(z)& \geq- [(1-t)\lambda_0+t\lambda_1]|\tilde{u}_\varepsilon|^{p-2}u_\varepsilon+(z,\nabla \tilde{u}_\varepsilon)|\nabla \tilde{u}_{\varepsilon}(z)|^{p-2}-\tilde{F}_\varepsilon(z)|\tilde{u}_\varepsilon(z)|^{p-2}\tilde{u}_\varepsilon(z).
\end{align*}
Thus, Multiply the above inequality by $-\tilde{u}_\varepsilon$ to get
\begin{equation}\label{BMI-form11}
-\tilde{u}_\varepsilon\Delta_p \tilde{u}_\varepsilon+(z,\nabla \tilde{u}_\varepsilon)|\nabla \tilde{u}_\varepsilon|^{p-1}\leq \lambda_t |\tilde{u}_\varepsilon|^{p}+\tilde{F}_\varepsilon(z)|\tilde{u}_\varepsilon(z)|^{p}.
\end{equation}

In the following, we consider a sequence of compact sets $T_j=\overline{A}_j\setminus B_j$, where $A_j$ and $B_j$ are open convex sets so that $A_j\subset\subset \Omega_t$, $B_j\supset \C_t$ and $\overline{B}_j\subset A_j$. Moreover, assume that $A_j \rightarrow \Omega_t$ and $B_j  \rightarrow \C_t$ in the Hausdorff metric, as $j \rightarrow +\infty$. For every $j\in \mathbb{N}$, we integrate it over $T_j$, we have
\[
\int_{T_j} -\tilde{u}_\varepsilon\Delta_p \tilde{u}_\varepsilon+(z,\nabla \tilde{u}_\varepsilon)|\nabla \tilde{u}_\varepsilon|^{p} d\gamma(z) \leq \lambda_t \int_{T_j} |\tilde{u}_\varepsilon|^{p} d\gamma(z).
\]
We integrate it over $T_j$, using integrating by parts, we have
\begin{equation}\label{BMI-form12}
\int_{T_j} |\nabla \tilde{u}_\varepsilon|^{p} d\gamma(z)-\int_{\partial T_j} \tilde{u}_\varepsilon | \nabla \tilde{u}_\varepsilon |^{p-2} \frac{\partial \tilde{u}_\varepsilon}{\partial n}d\gamma_{\partial T_j}\leq \lambda_t \int_{T_j} |\tilde{u}_\varepsilon|^{p} d\gamma(z)+\int_{T_j} \tilde{F}_\varepsilon(z)\tilde{u}^p_\varepsilon d\gamma(z).
\end{equation}
Note that, since $\tilde{w}_\varepsilon$ converges to $\tilde{w}$ uniformly, then $\tilde{u}_\varepsilon$ converges to $\tilde{u}=e^{-\tilde{w}}$ uniformly in $\overline{\Omega}_t$.  In particular, we have $\tilde{u}\in W_0^{1,p}(\Omega_t,\gamma)$. Moreover, by \eqref{BMI-form5} and \eqref{BMI-form5}, we know that $\nabla\tilde{u}_\varepsilon$ converges to $\nabla\tilde{u}$ uniformly in $T_j$. Passing to the limit for $\varepsilon\rightarrow 0$ in \eqref{BMI-form12}, using the uniform convergence of $\tilde{u}_\varepsilon$ and $\nabla \tilde{u}_\varepsilon$, the properties of $\tilde{F}_\varepsilon$ and the Dominated convergence Theorem, we have
\[
\int_{T_j} |\nabla \tilde{u}|^{p} d\gamma(z)-\int_{\partial T_j} \tilde{u} | \nabla \tilde{u}|^{p-2} \frac{\partial \tilde{u}}{\partial n}d\gamma_{\partial T_j}\leq \lambda_t \int_{T_j} |\tilde{u}|^{p} d\gamma(z)\leq \lambda_t \int_{\Omega_t} |\tilde{u}|^{p} d\gamma(z).
\]
We can rewrite the last inequality in the following form
\[
\int_{A_j\setminus B_j} |\nabla \tilde{u}|^{p} d\gamma-\int_{\partial A_j \cup \partial B_j} \tilde{u}| \nabla \tilde{u}|^{p-2} \frac{\partial \tilde{u}}{\partial n}d\gamma_{\partial A_j \cup \partial B_j}\leq \lambda_t \int_{\Omega_t} |\tilde{u}|^{p} d\gamma(z).
\]
Since $\tilde{u}\rightarrow 0$ as $z\rightarrow \partial \Omega_t$ and $\nabla \tilde{u}\rightarrow 0$ as $z\rightarrow \partial \C_t$ (recall that $\C_t=\{ x\in\Omega_t:\ \nabla \tilde{w}(x)=0\}=\{ x\in\Omega_t:\ \nabla \tilde{u}(x)=0\}$). Passing to the limit for $j\rightarrow +\infty$, we have
\[
\int_{\Omega_t\setminus \C_t} |\nabla \tilde{u}|^{p} d\gamma(z) \leq \lambda_t \int_{\Omega_t} |\tilde{u}|^{p} d\gamma(z).
\]
Since
\[
\int_{\Omega_t\setminus \C_t} |\nabla \tilde{u} |^pd\gamma(z) =\int_{\Omega_t } |\nabla \tilde{u}|^p d\gamma(z),
\]
we can deduce that
\begin{equation}\nonumber
\lambda_{p,\gamma}(\Omega_t)\leq \lambda_t=(1-t)\lambda_0 +t\lambda_1.
\end{equation}
This completes the proof.
\end{proof}

\section*{Acknowledgement} The author would like to thank Prof. Andrea Colesanti and Paolo Salani for their patient guidance and warm encouragement, and the Department of Mathematics and Computer Science of the University of Florence for the hospitality. The author would like to thank China Scholarship Council (CSC) for the financial support during visit in the University of Florence.

\bibliographystyle{amsplain}

\end{document}